\theoremstyle{plain}
\newtheorem{thm}{\protect\theoremname}[section]
  \theoremstyle{plain}
  \newtheorem{cor}[thm]{\protect\corollaryname}
  \theoremstyle{plain}
  \newtheorem{prop}[thm]{\protect\propositionname}
  \theoremstyle{remark}
  \newtheorem{rem}[thm]{\protect\remarkname}
  \theoremstyle{plain}
  \newtheorem{lem}[thm]{\protect\lemmaname}
  \theoremstyle{remark}
  \newtheorem{notation}[thm]{\protect\notationname}
  \theoremstyle{definition}
  \newtheorem{example}[thm]{\protect\examplename}
  \theoremstyle{plain}
  \newtheorem{question}[thm]{\protect\questionname}
  \providecommand{\corollaryname}{Corollary}
  \providecommand{\examplename}{Example}
  \providecommand{\lemmaname}{Lemma}
  \providecommand{\notationname}{Notation}
  \providecommand{\propositionname}{Proposition}
  \providecommand{\questionname}{Question}
  \providecommand{\remarkname}{Remark}
\providecommand{\theoremname}{Theorem}
\begin{document}
\date{}

\title{Projections of self-similar sets with no separation condition}

\author{\'Abel Farkas}
\maketitle
\begin{abstract}
We investigate how the Hausdorff dimension and measure of a self-similar
set $K\subseteq\mathbb{R}^{d}$ behave under linear images. This depends
on the nature of the group $\mathcal{T}$ generated by the orthogonal
parts of the defining maps of $K$. We show that if $\mathcal{T}$
is finite then every linear image of $K$ is a graph directed attractor
and there exists at least one projection of $K$ such that the dimension
drops under the image of the projection. In general, with no restrictions
on $\mathcal{T}$ we establish that $\mathcal{H}^{t}\left(L\circ O(K)\right)=\mathcal{H}^{t}\left(L(K)\right)$
for every element $O$ of the closure of $\mathcal{T}$, where $L$
is a linear map and $t=\dim_{H}K$. We also prove that for disjoint
subsets $A$ and \textbf{$B$} of $K$ we have that $\mathcal{H}^{t}\left(L(A)\cap L(B)\right)=0$.
Hochman and Shmerkin showed that if $\mathcal{T}$ is dense in $SO(d,\mathbb{R})$
and the strong separation condition is satisfied then $\dim_{H}\left(g(K)\right)=\min\left\{ \dim_{H}K,l\right\} $
where $g$ is a continuously differentiable map of rank $l$. We deduce
the same result without any separation condition and we generalize
a result of Ero$\breve{\mathrm{g}}$lu by obtaining that $\mathcal{H}^{t}(g(K))=0$. 
\end{abstract}

\section{\label{sec:Introduction}Introduction}

\subsection{Overview}

Studying the Hausdorff dimension and measure of orthogonal projections
and linear images of sets has a long history. The most fundamental
result is that for an analytic subset $K$ of $\mathbb{R}^{d}$
\[
\dim_{H}\Pi_{M}(K)=\min\left\{ l,\dim_{H}(K)\right\} 
\]
for almost all $l$-dimensional subspaces $M$, where $\dim_{H}$
denotes the Hausdorff dimension and $\Pi_{M}:\mathbb{R}^{d}\longrightarrow M$
denotes the orthogonal projection onto $M$. If $\dim_{H}(K)>l$ then
\[
\mathcal{H}^{l}(\Pi_{M}(K))>0
\]
for almost all $l$-dimensional subspaces $M$, where $\mathcal{H}^{s}$
denotes the $s$-dimensional Hausdorff measure. These were proved
in the case $d=2$, $l=1$ by Marstrand \cite{Marstrand_paper}, and
generalized to higher dimensions by Mattila \cite{Mattila-H-dim of projections of sets}.
We call a set $K$ an $s$-set if $0<\mathcal{H}^{s}(K)<\infty$.
If $l$ is an integer than we call an $l$-set $K$ \textsl{irregular}
if $\mathcal{H}^{l}(K\cap M)=0$ for every differentiable $l$-manifold
$M$. It was shown by Besicovitch \cite{Besicovitch-Proj THM} in
the planar case and by Federer \cite{Federer-Proj THM} in the higher
dimensional case that for an $l$-set $K$ where $l$ is an integer
\[
\mathcal{H}^{l}(\Pi_{M}(K))=0
\]
for almost all $l$-dimensional subspaces $M$ if and only if $K$
is irregular. If $K$ is not irregular then $\mathcal{H}^{l}(\Pi_{M}(K))>0$
for almost all $l$-dimensional subspaces $M$.

While the results above provide information about generic projections
they do not give any information about an individual projection or
linear image of the set. There are examples that show that the `exceptional
set' for which the conclusions do not hold can be `big' \cite{Marstrand_paper}.
Analyzing the image of a set under a particular linear map is more
difficult even in simple cases, see for example Kenyon \cite{Kenyon-sierpinski}
and Hochman \cite[Theorem 1.6]{Hochman-overlaps} who consider the
$1$-dimensional Sierpinski gasket. Hence we restrict the attention
to a certain family of sets, namely we assume $K$ to be a self-similar
set.

While studying self-similar sets the `open set condition' is a convenient
assumption that makes the proofs significantly simpler. That is why
the case when the open set condition is satisfied is quite well-understood
but we know much less in the general situation when no separation
condition is assumed. The results in this paper include this general
situation. Recent results of Hochman were major breakthrough in studying
overlapping self-similar sets. A folklore conjecture is that for a
self-similar set $K$ on the line $\dim_{H}K<\min\left\{ 1,s\right\} $
if and only if exact overlapping occurs among the cylinder sets where
$s$ denotes the similarity dimension of $K$. Hochman \cite[Theorem 1.5]{Hochman-overlaps}
proves this conjecture when only algebraic parameters occur in the
defining maps of $K$. In Example \ref{Ex: nonOSC-line} we provide
a self-similar set $\widehat{K}\subseteq\mathbb{R}$ such that after
deleting any number of exact overlaps from the defining maps of $\widehat{K}$
we still have exact overlaps and hence the similarity dimension never
realises the Hausdorff dimension of the set even if changing the defining
maps.

It is easy to see that if $K$ is a self-similar set with all the
defining maps homotheties then every linear image of $K$ is itself
a self-similar set. It was asked by Mattila \cite[Problem 2]{Mattila-survey}
in the planar case `what can be said about the measures $\mathcal{H}^{t}(\Pi_{M}(K))$
if $t=\dim_{H}(K)<1$ and the defining maps contain rotations?'. Ero$\breve{\mathrm{g}}$lu
\cite{Eroglu-dense set of rotations} showed that if the open set
condition is satisfied and the orthogonal part of one of the defining
maps is a rotation of infinite order then $\mathcal{H}^{t}(\Pi_{M}(K))=0$
for every line $M$. We generalize this result to higher dimensions
and for continuously differentiable maps in place of projections without
assuming any separation condition. We obtain results on the structure
of linear images of $K$ if the transformation group generated by
the orthogonal parts of the defining maps is of finite order. We show
that linear images of such self-similar sets are graph directed attractors.
We establish an invariance result concerning the Hausdorff measure
of the linear images of $K$ in the general case with no restrictions
on the orthogonal transformation group. As a consequence of this we
conclude that for every linear map into another Euclidean space $L:\mathbb{R}^{d}\longrightarrow\mathbb{R}^{d_{2}}$
where $d_{2}$ is an arbitrary natural number and for disjoint subsets
$A$ and \textbf{$B$} of $K$ we have that $\mathcal{H}^{t}\left(L(A)\cap L(B)\right)=0$
even if no separation condition is satisfied. In particular, projection
of disjoint parts of $K$ are almost disjoint. 

Peres and Shmerkin \cite[Thorem 5]{Peres-Schmerkin Resonance between Cantor sets}
showed that if the orthogonal part of one of the defining maps is
a rotation of infinite order then
\[
\dim_{H}\Pi_{M}(K)=\min\left\{ 1,\dim_{H}(K)\right\} 
\]
for every line $M$. Very recently Hochman and Shmerkin \cite[Corollary 1.7]{Hochman-Schmerkin-local entropy}
generalized this to higher dimensions for continuously differentiable
maps in the strong separation condition case. Using their result and
a dimension approximation method we deduce the same conclusion without
any separation condition. On the other hand, we show that if the orthogonal
transformation group generated by the orthogonal parts of the defining
maps is of finite order then there exists a projection of $K$ such
that the dimension drops under the image of the projection.

\subsection{Definitions and Notations}

A \textit{self-similar iterated function system} (SS-IFS) in $\mathbb{R}^{d}$
is a finite collection of maps $\left\{ S_{i}\right\} _{i=1}^{m}$
from $\mathbb{R}^{d}$ to $\mathbb{R}^{d}$ such that all the $S_{i}$
are contracting similarities. The \textit{attractor} of the SS-IFS
is the unique nonempty compact set $K$ such that $K=\bigcup_{i=1}^{m}S_{i}(K)$.
The attractor of an SS-IFS is called a \textit{self-similar set}.

We say that the SS-IFS $\left\{ S_{i}\right\} _{i=1}^{m}$ satisfies
the \textit{strong separation condition} (SSC) if $\bigcup_{i=1}^{m}S_{i}(K)$
is a disjoint union. We say that the SS-IFS $\left\{ S_{i}\right\} _{i=1}^{m}$
satisfies the \textit{open set condition} (OSC) if there exists a
nonempty open set $U\subseteq\mathbb{R}^{d}$ such that
\[
\bigcup_{i=1}^{m}S_{i}(U)\subseteq U
\]
and the union is disjoint. It is easy to see that SSC implies OSC.

Let $\left\{ S_{i}\right\} _{i=1}^{m}$ be an SS-IFS. Then every $S_{i}$
can be uniquely decomposed as
\begin{equation}
S_{i}(x)=r_{i}T_{i}(x)+v_{i}\label{eq:S_ideconposition}
\end{equation}
for all $x\in\mathbb{R}^{d}$, where $0<r_{i}<1$, $T_{i}$ is an
orthogonal transformation and $v_{i}\in\mathbb{R}^{d}$ is a translation,
for all indices $i$. The unique solution $s$ of the equation
\begin{equation}
\sum_{i=1}^{m}r_{i}^{s}=1\label{eq: S-dim sum}
\end{equation}
is called the \textit{similarity dimension} of the SS-IFS. It is well-known
that if the SS-IFS satisfies the OSC then $0<\mathcal{H}^{s}(K)<\infty$.
Let $\mathcal{T}$ denote the group generated by the orthogonal transformations
$\left\{ T_{i}\right\} _{i=1}^{m}$. We call $\mathcal{T}$ the \textit{transformation
group} of the SS-IFS.

We denote the set $\left\{ 1,2,\ldots,m\right\} $ by $\mathcal{I}$.
Let $\boldsymbol{\mathbf{i}}=(i_{1},\ldots,i_{k})\in\mathcal{I}^{k}$
i.e. a $k$-tuple of indices. Then we write $S_{\boldsymbol{\mathbf{i}}}=S_{i_{1}}\circ\ldots\circ S_{i_{k}}$
and $K_{\boldsymbol{\mathbf{i}}}=S_{\boldsymbol{\mathbf{i}}}(K)$.
Since the similarities are decomposed as in (\ref{eq:S_ideconposition})
we write $r_{\boldsymbol{\mathbf{i}}}=r_{i_{1}}\cdot\ldots\cdot r_{i_{k}}$
and $T_{\boldsymbol{\mathbf{i}}}=T_{i_{1}}\circ\ldots\circ T_{i_{k}}$.
For an overview of the theory of self-similar sets see, for example,
\cite{Falconer1,Falconer Techniques,Hutchinson,Mattilakonyv,Schief OSC}.

We would like to avoid the singular non-interesting case, when $K$
is a single point, which occurs if and only if every $S_{i}$ has
the same fixed point. Hence we make the global assumption throughout
the whole paper that $K$ contains at least two points. This implies
that there are at least two maps in the SS-IFS, i.e. $m>1$. Hence
the similarity dimension of the SS-IFS is strictly positive. It is
relevant for us that the assumption that $K$ contains at least two
points also implies that $\dim_{H}K>0$ even with no separation condition.

Let $G\left(\mathcal{V},\mathcal{E}\right)$ be a directed graph,
where $\mathcal{V}=\left\{ 1,2,\ldots,q\right\} $ is the set of vertices
and $\mathcal{E}$ is the finite set of directed edges such that for
each $i\in\mathcal{V}$ there exists $e\in\mathcal{E}$ starting from
$i$. Let $\mathcal{E}_{i,j}$ denote the set of edges from vertex
$i$ to vertex $j$ and $\mathcal{E}_{i,j}^{k}$ denote the set of
sequences of $k$ edges $\left(e_{1},\ldots,e_{k}\right)$ which form
a directed path from vertex $i$ to vertex $j$. A \textit{graph directed
iterated function system} (GD-IFS) in $\mathbb{R}^{d}$ is a finite
collection of maps $\left\{ S_{e}:e\in\mathcal{E}\right\} $ from
$\mathbb{R}^{d}$ to $\mathbb{R}^{d}$ such that all the $S_{e}$
are contracting similarities. The \textit{attractor} of the GD-IFS
is the unique $q$-tuple of nonempty compact sets $\left(K_{1},\ldots,K_{q}\right)$
such that
\begin{equation}
K_{i}=\bigcup_{j=1}^{q}\bigcup_{e\in\mathcal{E}_{i,j}}S_{e}(K_{j}).\label{eq:GDAdef}
\end{equation}
The attractor of a GD-IFS is called a \textit{graph directed attractor}.

Let $\left\{ S_{e}:e\in\mathcal{E}\right\} $ be a GD-IFS. Then every
$S_{e}$ can be uniquely decomposed as
\begin{equation}
S_{e}(x)=r_{e}T_{e}(x)+v_{e}\label{eq:S_edeconposition for GDA}
\end{equation}
for all $x\in\mathbb{R}^{d}$, where $0<r_{e}<1$, $T_{e}$ is an
orthogonal transformation and $v_{e}\in\mathbb{R}^{d}$ is a translation,
for all edges $e$. Let $A^{(s)}$ be the $q\times q$ matrix with
$(i,j)$th entry given by
\begin{equation}
A_{i,j}^{(s)}=\sum_{e\in\mathcal{E}_{i,j}}r_{e}^{s}.\label{eq:A^s def}
\end{equation}
For a matrix $A$ let $\rho(A)$ denote the spectral radius of $A$,
that is the largest absolute value of the eigenvalues of $A$. The
unique solution $s$ of the equation
\begin{equation}
\rho(A^{(s)})=1\label{eq:RO(A^s )=00003D1}
\end{equation}
is called the \textit{similarity dimension} of the GD-IFS.

The directed graph $G\left(\mathcal{V},\mathcal{E}\right)$ is called
\textsl{strongly connected} if for every pair of vertices $i$ and
$j$ there exist a directed path from $i$ to $j$ and a directed
path from $j$ to $i$. We say that the GD-IFS $\left\{ S_{e}:e\in\mathcal{E}\right\} $
is \textsl{strongly connected} if $G\left(\mathcal{V},\mathcal{E}\right)$
is strongly connected. For an overview of the theory of graph directed
attractors see, for example, \cite{Falconer Techniques,Mauldin Williams construction,Wang GDA OSC}.

\subsection{Statement of results}

It is well-known that if $K$ is an attractor of an SS-IFS such that
$\left|\mathcal{T}\right|=1$, where $\left|.\right|$ denotes the
cardinality of a set, then $\Pi_{M}(K)$ is also a self-similar set
for every $l$-dimensional subspace $M$. It was shown by Fraser \cite[Lemma 2.7]{Fraser-box like self affine}
that the vertical and horizontal projections of certain `box-like'
planar self-affine sets are graph directed attractors. We show that,
in the case of finite $\mathcal{T}$, similar results can be obtained
on the structure of the linear images of self-similar sets.

\begin{thm}
\label{cor: SS Lin im is GDA}Let $\left\{ S_{i}\right\} _{i=1}^{m}$
be an SS-IFS with attractor $K\subseteq\mathbb{R}^{d}$ of similarity
dimension $s$ and $L:\mathbb{R}^{d}\longrightarrow\mathbb{R}^{d_{2}}$
be a linear map. Assume that $\mathcal{T}=\left\{ O_{1},\ldots,O_{q}\right\} $
is a finite group where $q=\left|\mathcal{T}\right|$. Then there
exists a strongly connected GD-IFS in $\mathbb{R}^{d_{2}}$ with attractor
$\left(L\circ O_{1}(K),\ldots,L\circ O_{q}(K)\right)$ such that $s$
is the similarity dimension of this GD-IFS with $T_{e}$ the identity
map for all directed edges $e$, and additionally $\mathcal{H}^{s}\left(L\circ O_{1}(K)\right)=\ldots=\mathcal{H}^{s}\left(L\circ O_{q}(K)\right)$.
\end{thm}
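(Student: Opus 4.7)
The plan is to build the GD-IFS by hand using the group structure of $\mathcal{T}$. For each $(k,i)\in\{1,\ldots,q\}\times\{1,\ldots,m\}$ the product $O_kT_i$ lies in $\mathcal{T}$, so there is a unique index $\sigma(k,i)$ with $O_kT_i=O_{\sigma(k,i)}$. Applying $O_k$ to the self-similarity relation $K=\bigcup_i(r_iT_i(K)+v_i)$ and then the linear map $L$ gives
\begin{equation*}
L\circ O_k(K)\;=\;\bigcup_{i=1}^{m}\Bigl(r_i\,(L\circ O_{\sigma(k,i)})(K)+L(O_kv_i)\Bigr).
\end{equation*}
Take the vertex set $\mathcal{V}=\{1,\ldots,q\}$ and, for every such pair $(k,i)$, introduce one edge $e_{k,i}$ from $k$ to $\sigma(k,i)$ equipped with the homothety $\tilde{S}_{e_{k,i}}(x)=r_ix+L(O_kv_i)$, whose orthogonal part is the identity as required. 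The display above shows that the tuple $(L\circ O_1(K),\ldots,L\circ O_q(K))$ satisfies (\ref{eq:GDAdef}), hence it is the attractor of this GD-IFS.

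Strong connectedness follows from the fact that $\mathcal{T}$ is a finite group generated by $T_1,\ldots,T_m$, so every element---in particular $O_k^{-1}O_{k'}$---can be written as a word $T_{i_1}\cdots T_{i_p}$; the edges $e_{k,i_1},\,e_{\sigma(k,i_1),i_2},\ldots$ then trace a directed path from $k$ to $k'$, and since $k,k'$ are arbitrary this also gives a path in the reverse direction.

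To verify that $s$ is the similarity dimension of the new system, compute
\begin{equation*}
A^{(s)}_{k,j}\;=\;\sum_{e\in\mathcal{E}_{k,j}}r_e^{s}\;=\;\sum_{i\,:\,O_kT_i=O_j}r_i^{s}.
\end{equation*}
Summing over $j$ recovers $\sum_{i=1}^{m}r_i^{s}=1$, so every row of $A^{(s)}$ sums to $1$. Dually, for fixed $j$ and $i$ there is exactly one $k$ satisfying $O_k=O_jT_i^{-1}$, so the column sums also equal $1$. Thus $A^{(s)}$ is doubly stochastic, the all-ones vector is a $1$-eigenvector, and $\rho(A^{(s)})=1$.

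Finally, set $c_k:=\mathcal{H}^{s}(L\circ O_k(K))$. The covers $\{K_{\mathbf{i}}:\mathbf{i}\in\mathcal{I}^{n}\}$ give $\mathcal{H}^{s}(K)\le(\operatorname{diam}K)^{s}<\infty$, and since $L\circ O_k$ is Lipschitz each $c_k$ is finite. The attractor identity yields
\begin{equation*}
c_k\;\le\;\sum_{i=1}^{m}r_i^{s}\,c_{\sigma(k,i)}\;=\;(A^{(s)}c)_k.
\end{equation*}
Summing over $k$ and using that the column sums of $A^{(s)}$ equal $1$ forces the aggregate inequality to be an equality, so $c_k=(A^{(s)}c)_k$ for every $k$, i.e.\ $A^{(s)}c=c$. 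By strong connectedness $A^{(s)}$ is irreducible, and Perron--Frobenius then makes its $1$-eigenspace one-dimensional and spanned by $(1,\ldots,1)^{T}$, forcing $c_1=\cdots=c_q$. The one place that demands care is the finiteness of the $c_k$: without it the summing-and-collapsing step that upgrades the componentwise inequality to equality is not justified. Everything else is combinatorial bookkeeping dictated by the group structure of $\mathcal{T}$.
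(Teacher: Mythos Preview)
Your construction of the GD-IFS, the strong-connectedness argument, and the verification that $\rho(A^{(s)})=1$ match the paper's proof essentially word for word. The one genuine difference is in the final step, proving $\mathcal{H}^{s}(L\circ O_{1}(K))=\cdots=\mathcal{H}^{s}(L\circ O_{q}(K))$.

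The paper proceeds by citing Wang's result (see Remark~\ref{Rem: A^s(y)=00003Dy where y=00003DH^s(K_i)}) that $\mathcal{H}^{s}(K_{e}\cap K_{f})=0$ for distinct edges $e,f$ into the same vertex, which gives the exact eigenvector equation $A^{(s)}y=y$ directly; it then splits into the OSC and non-OSC cases via Proposition~\ref{prop: OSC GDA Wang}. You instead observe that $A^{(s)}$ is doubly stochastic (the column-sum identity is a nice extra observation the paper does not record), start from the subadditivity inequality $c\le A^{(s)}c$, and sum over rows: since column sums equal $1$, the aggregate inequality collapses to an equality, forcing $c=A^{(s)}c$ componentwise. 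Perron--Frobenius then finishes as in the paper. Your route is more self-contained---it avoids both the citation to Wang and the OSC case split---and the finiteness of the $c_{k}$, which you correctly flag as the only delicate point, is indeed secured by $\mathcal{H}^{s}(K)\le(\mathrm{diam}\,K)^{s}$ together with the Lipschitz bound.
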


Our next result states that if the Hausdorff dimension of $K$ equals
its similarity dimension and $\mathcal{T}$ is finite then we can
always find a projection such that the dimension drops under the projection.
We show this by finding a projection where exact overlapping occurs.
We note that the assumption, that the Hausdorff and the similarity
dimensions are the same, is weaker than the OSC, see \cite[Theorem 1.1]{Pos pack paper}.

\begin{thm}
\label{thm:dimension drop SS}Let $\left\{ S_{i}\right\} _{i=1}^{m}$
be an SS-IFS with attractor $K\subseteq\mathbb{R}^{d}$ of similarity
dimension $s$. Assume that $\mathcal{T}$ is finite and let $l\in\mathbb{N}$,
$l<d$. Then there exists an $l$-dimensional subspace $M\subseteq\mathbb{R}^{d}$
such that $\dim_{H}\left(\Pi_{M}(K)\right)<s$.
\end{thm}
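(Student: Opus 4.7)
The plan is to apply Theorem~\ref{cor: SS Lin im is GDA} with $L=\Pi_{M}$, which realises $\Pi_{M}(K)$ as the attractor at the identity vertex of a strongly connected GD-IFS of similarity dimension $s$, with every edge transformation equal to the identity. I then choose $M$ so that this GD-IFS has an exact overlap at the identity vertex, delete a redundant edge without altering the attractor, and invoke strict Perron--Frobenius monotonicity of the spectral radius to conclude that the new similarity dimension lies strictly below $s$.

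The overlap comes from a pigeonhole at level $n$ of the SS-IFS. Since $\mathcal{T}$ is finite of size $q$, I fix a multiset on $\mathcal{I}$ whose multinomial $n!/(n_{1}!\cdots n_{m}!)$ exceeds $q$: all its permutations share the same $r_{\mathbf{i}}$ and land in $\mathcal{T}$ under $\mathbf{i}\mapsto T_{\mathbf{i}}$, so two distinct ones $\mathbf{i}\neq\mathbf{j}\in\mathcal{I}^{n}$ must satisfy $r_{\mathbf{i}}=r_{\mathbf{j}}$ and $T_{\mathbf{i}}=T_{\mathbf{j}}$. If in addition $v_{\mathbf{i}}=v_{\mathbf{j}}$, then $S_{\mathbf{i}}=S_{\mathbf{j}}$ is an exact overlap already in the level-$n$ iterate of the original SS-IFS, which forces $\dim_{H}K<s$ (drop the redundant map) and the conclusion for every $M$. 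Otherwise $w:=v_{\mathbf{i}}-v_{\mathbf{j}}\neq 0$, and since $l\leq d-1$ I pick any $l$-dimensional $M\subseteq w^{\perp}$, so that $\Pi_{M}w=0$.

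In the $n$-iterated GD-IFS supplied by Theorem~\ref{cor: SS Lin im is GDA} the edges out of the identity vertex labelled $\mathbf{i}$ and $\mathbf{j}$ both end at the vertex $T_{\mathbf{i}}=T_{\mathbf{j}}$ and induce the identical similarity $x\mapsto r_{\mathbf{i}}x+\Pi_{M}v_{\mathbf{i}}$, so removing one of them leaves $\Pi_{M}(K)$ unchanged while strictly lowering one entry of the transition matrix. The delicate point is that the $n$-th power of an irreducible matrix need not itself be irreducible when the underlying graph has period greater than one; I bypass this by restricting the modified GD-IFS to the strongly connected component of the $n$-th power graph containing the identity vertex. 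That component coincides with the forward reach of the identity vertex and therefore still describes $\Pi_{M}(K)$; its transition matrix is irreducible with spectral radius $1$ and has one entry strictly decreased by the deletion, so strict Perron--Frobenius monotonicity yields spectral radius strictly less than $1$ at parameter $s$, hence a similarity dimension $s'<s$, and thus $\dim_{H}\Pi_{M}(K)\leq s'<s$.
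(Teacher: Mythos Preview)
Your proof is correct and follows the same overall strategy as the paper: use Theorem~\ref{cor: SS Lin im is GDA} to realise $\Pi_M(K)$ inside a strongly connected GD-IFS of similarity dimension $s$, manufacture an exact overlap at the identity vertex by choosing $M$ orthogonal to a suitable translation vector, delete a redundant edge, and invoke Lemma~\ref{lem:A>Bmatrix}/\ref{lem:simdim drop by eliminating function} to drop the similarity dimension below $s$.

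The difference lies in how the overlapping pair is produced. The paper first iterates the SS-IFS $q=|\mathcal{T}|$ times so that $T_1^q=T_2^q=Id_{\mathbb{R}^d}$, and once more so that $r_1=r_2$; this forces the two relevant edges in the GD-IFS to be \emph{loops} at the identity vertex, so removing one is trivially harmless for strong connectivity. Your pigeonhole on permutations of a fixed multiset is an equally valid way to obtain $\mathbf{i}\neq\mathbf{j}$ with $r_{\mathbf{i}}=r_{\mathbf{j}}$ and $T_{\mathbf{i}}=T_{\mathbf{j}}$, but the two edges are then merely parallel (same endpoints) rather than loops. This is still fine---removing one of two parallel edges preserves strong connectivity---and that single observation would have spared you the detour through periodicity of the $n$-th power graph. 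In fact you could bypass that discussion entirely by applying Theorem~\ref{cor: SS Lin im is GDA} directly to the iterated SS-IFS $\{S_{\mathbf{i}}:\mathbf{i}\in\mathcal{I}^n\}$, which already yields a strongly connected GD-IFS on the subgroup $\mathcal{T}'=\langle T_{\mathbf{i}}:\mathbf{i}\in\mathcal{I}^n\rangle$ with similarity dimension $s$; your restriction to the strongly connected component of the identity recovers exactly this object. The paper's route is slightly cleaner; yours is a touch more combinatorial but arrives at the same place.
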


We denote the set of all orthogonal transformations of $\mathbb{R}^{d}$
by $\mathbb{O}_{d}$ which can be metricized using the Euclidean operator
norm
\[
\left\Vert T\right\Vert =\sup_{x\in\mathbb{R}^{d},\left\Vert x\right\Vert =1}\left\Vert Tx\right\Vert ,
\]
where $\left\Vert y\right\Vert $ denotes the Euclidean norm of $y\in\mathbb{R}^{d}$.
With this metric $\mathbb{O}_{d}$ is a compact topological group.
We denote by $\overline{\mathcal{T}}$ the closure of $\mathcal{T}$
in this topology.

The result of Theorem \ref{cor: SS Lin im is GDA}, that $\mathcal{H}^{s}\left(L\circ O_{i}(K)\right)=\mathcal{H}^{s}\left(L\circ O_{j}(K)\right)$,
suggests the following theorem.

\begin{thm}
\label{thm: Egynlo lin Im}Let $\left\{ S_{i}\right\} _{i=1}^{m}$
be an SS-IFS with attractor $K\subseteq\mathbb{R}^{d}$, let $t=\dim_{H}(K)$
and $L:\mathbb{R}^{d}\longrightarrow\mathbb{R}^{d_{2}}$ be a linear
map. If $\mathcal{H}^{t}(K)>0$ then
\begin{equation}
\mathcal{H}^{t}\left(L\circ O(A)\right)=\frac{\mathcal{H}^{t}(A)}{\mathcal{H}^{t}(K)}\mathcal{H}^{t}\left(L(K)\right)\label{eq:ORBIT_EQ}
\end{equation}
for all $A\subseteq K$ and $O\in\overline{\mathcal{T}}$.
\end{thm}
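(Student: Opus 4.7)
The plan is to split (\ref{eq:ORBIT_EQ}) into two statements: (a) the invariance $\mathcal{H}^{t}(L\circ O(K))=\mathcal{H}^{t}(L(K))$ for every $O\in\overline{\mathcal{T}}$, and (b) the proportionality $\mathcal{H}^{t}(L\circ O(A))=c\,\mathcal{H}^{t}(A)$ with $c:=\mathcal{H}^{t}(L(K))/\mathcal{H}^{t}(K)$, for every Borel $A\subseteq K$ and every fixed $O\in\overline{\mathcal{T}}$. The degenerate case $\mathcal{H}^{t}(L(K))=0$ is contained in (a), while (a) is recovered from (b) by taking $A=K$; nevertheless I would prove (a) first and feed it into (b).

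For (a), introduce $f(O):=\mathcal{H}^{t}(L\circ O(K))$. The scaling identity $L\circ O\circ S_{\mathbf{i}}=r_{\mathbf{i}}\,L\circ OT_{\mathbf{i}}+L(Ov_{\mathbf{i}})$ together with the decomposition $K=\bigcup_{|\mathbf{i}|=n}S_{\mathbf{i}}(K)$ yields the two elementary bounds
\[
r_{\mathbf{i}}^{t}\,f(OT_{\mathbf{i}})\;\leq\;f(O)\;\leq\;\sum_{|\mathbf{i}|=n}r_{\mathbf{i}}^{t}\,f(OT_{\mathbf{i}}),
\]
the first from $S_{\mathbf{i}}(K)\subseteq K$ and the second from countable subadditivity of $\mathcal{H}^{t}$. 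Writing $M:=\sup_{O\in\overline{\mathcal{T}}}f(O)$ and $m:=\inf_{O\in\overline{\mathcal{T}}}f(O)$ (both finite since every $L\circ O(K)$ sits inside a fixed bounded subset of $\mathbb{R}^{d_{2}}$), I would chase these inequalities over words $\mathbf{i}$ of increasing length, then exploit the density of $\{OT_{\mathbf{i}}:|\mathbf{i}|<\infty\}$ in the coset $O\overline{\mathcal{T}}=\overline{\mathcal{T}}$, together with an upper-semi-continuity argument for $f$ under Hausdorff-metric convergence on compact sets in a fixed ball, to conclude $M=m$ and hence $f\equiv\mathcal{H}^{t}(L(K))$.

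For (b), fix $O$ and set $\phi(A):=\mathcal{H}^{t}(L\circ O(A))$. The same scaling identity applied to subsets gives, for every cylinder, $\phi(S_{\mathbf{i}}(K))=r_{\mathbf{i}}^{t}\mathcal{H}^{t}(L\circ OT_{\mathbf{i}}(K))=r_{\mathbf{i}}^{t}\mathcal{H}^{t}(L(K))=c\,\mathcal{H}^{t}(S_{\mathbf{i}}(K))$, using (a) applied to $OT_{\mathbf{i}}\in\overline{\mathcal{T}}$. Combining this cylinder identity with an efficient cover of an arbitrary Borel $A\subseteq K$ by cylinders of small diameter and countable subadditivity gives $\phi(A)\leq c\,\mathcal{H}^{t}(A)$; applying the same bound to $K\setminus A$ and using $\phi(K)=c\,\mathcal{H}^{t}(K)$ from (a) forces equality in both.

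The principal obstacle I foresee is the interaction between cylinder overlaps and the Hausdorff measure of their images. Without any separation condition $\mathcal{H}^{t}$ need not be additive over the cylinder decomposition at a given level, so both closing the inequality in (a) and controlling the error in the cylinder cover in (b) demand a quantitative handle on the overlap sets $S_{\mathbf{i}}(K)\cap S_{\mathbf{j}}(K)$ and their $L$-images. I expect the cleanest route is to first establish (a) when $\mathcal{T}$ is finite, invoking the graph-directed structure of Theorem \ref{cor: SS Lin im is GDA} and the standard equality of Hausdorff measures on strongly connected graph-directed attractors, and then to bootstrap to arbitrary $\overline{\mathcal{T}}$ by approximating $O\in\overline{\mathcal{T}}$ along a sequence $T_{\mathbf{i}_{n}}$ while carrying the uniform bounds above across the limit.
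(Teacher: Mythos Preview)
Your two-step architecture (invariance for $A=K$, then proportionality for general $A$) and the cylinder identity $\phi(S_{\mathbf{i}}(K))=r_{\mathbf{i}}^{t}\,\mathcal{H}^{t}(L\circ OT_{\mathbf{i}}(K))$ are exactly the right starting points, and you correctly flag overlap as the crux. But the proposed arguments do not actually neutralise that overlap, and the fallback does not rescue them.

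The core problem is that the subadditivity bound $f(O)\leq\sum_{|\mathbf{i}|=n}r_{\mathbf{i}}^{t}\,f(OT_{\mathbf{i}})$ cannot be chased to $M=m$, because without a separation condition $\sum_{|\mathbf{i}|=n}r_{\mathbf{i}}^{t}=\bigl(\sum_{i}r_{i}^{t}\bigr)^{n}$ need not equal $1$: that identity holds at the \emph{similarity} dimension $s$, while here $t=\dim_{H}K$ may be strictly smaller, and then the sum tends to infinity with $n$. The same overcounting obstructs your step (b): an ``efficient cylinder cover'' of $A$ has $\sum r_{\mathbf{i}}^{t}$ comparable to $\mathcal{H}^{t}(A)/\mathcal{H}^{t}(K)$ only if the cylinders are essentially disjoint. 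The paper's replacement for the naive level-$n$ decomposition is Proposition~\ref{lem:Egyenlito disjoint lem T=00003Dinfty}: a Vitali-type argument produces, for any prescribed $O\in\overline{\mathcal{T}}$ and $\delta>0$, a \emph{pairwise disjoint} family $\{K_{\mathbf{i}}:\mathbf{i}\in\mathcal{I}_{\infty}\}$ exhausting $K$ up to $\mathcal{H}^{t}$-null sets with every $T_{\mathbf{i}}$ within $\delta$ of $O$; disjointness forces $\sum_{\mathbf{i}\in\mathcal{I}_{\infty}}r_{\mathbf{i}}^{t}=1$ (Remark~\ref{Rem: Mainlem EQ}), and that is what makes the averaging close. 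Your fallback via Theorem~\ref{cor: SS Lin im is GDA} does not help: that theorem equates the values of $\mathcal{H}^{s}(L\circ O_{i}(K))$ at the similarity dimension $s$, which is vacuous (all zero) precisely in the non-OSC case $t<s$ you are trying to cover. Finally, the upper semi-continuity of $O\mapsto\mathcal{H}^{t}(L\circ O(K))$ is \emph{not} a general consequence of Hausdorff-metric convergence of compacta (Hausdorff measure is badly discontinuous under such limits); it is a special feature of self-similar sets (Proposition~\ref{lem:Lin neighbourhood lem}), and its proof again rests on the disjoint decomposition of Proposition~\ref{lem:Egyenlito disjoint lem T=00003Dinfty}. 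In short, the missing ingredient in both (a) and (b) is a disjoint cylinder exhaustion with controlled orthogonal parts, and neither the graph-directed reduction nor generic semicontinuity supplies it.
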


We note that the assumption in Theorem \ref{thm: Egynlo lin Im},
that $\mathcal{H}^{t}(K)>0$, is again a weaker condition than the
OSC (see Example \ref{Ex: nonOSC-line} and Example \ref{Ex: nonOSC-Plane})
and the only role of this assumption is that we can divide by $\mathcal{H}^{t}(K)$
in the formula. If $\mathcal{H}^{t}(K)=0$ then $\mathcal{H}^{t}\left(L(K)\right)=0$
for every linear map $L$. In Example \ref{Ex: nonOSC-Plane} we construct
a self-similar set $K$ with $0<\mathcal{H}^{t}(K)<\infty$ such that
there exists no SS-IFS with attractor $K$ that satisfies the OSC.

Theorem \ref{thm: Egynlo lin Im} has an interesting consequence,
that the linear images of disjoint parts of $K$ are `almost disjoint'
even if no separation condition is satisfied.

\begin{cor}
\label{cor: SS almost disjoint A B}Let $\left\{ S_{i}\right\} _{i=1}^{m}$
be an SS-IFS with attractor $K\subseteq\mathbb{R}^{d}$, let $t=\dim_{H}(K)$,
let $L:\mathbb{R}^{d}\longrightarrow\mathbb{R}^{d_{2}}$ be a linear
map and $A,B\subseteq K$ be such that $\mathcal{H}^{t}\left(A\cap B\right)=0$
and $A$ is $\mathcal{H}^{t}$-measurable. Then $\mathcal{H}^{t}\left(L(A)\cap L(B)\right)=0$.
\end{cor}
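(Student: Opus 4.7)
The plan is to use Theorem \ref{thm: Egynlo lin Im} to transport the measure identity $\mathcal{H}^{t}(A\cap B)=0$ to the image side, and then conclude via two applications of Caratheodory's criterion.

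First I would dispose of the degenerate case $\mathcal{H}^{t}(K)=0$: since $L$ is Lipschitz, $\mathcal{H}^{t}(L(K))\le\|L\|^{t}\mathcal{H}^{t}(K)=0$, so $L(A)\cap L(B)\subseteq L(K)$ is automatically null. Assume henceforth $\mathcal{H}^{t}(K)>0$, so that Theorem \ref{thm: Egynlo lin Im} applies (with $O$ the identity) to each of the subsets $A$, $B$, and $A\cup B$ of $K$, yielding
\[
\mathcal{H}^{t}(L(A))+\mathcal{H}^{t}(L(B))=\frac{\mathcal{H}^{t}(A)+\mathcal{H}^{t}(B)}{\mathcal{H}^{t}(K)}\mathcal{H}^{t}(L(K))
\]
together with
\[
\mathcal{H}^{t}(L(A)\cup L(B))=\mathcal{H}^{t}(L(A\cup B))=\frac{\mathcal{H}^{t}(A\cup B)}{\mathcal{H}^{t}(K)}\mathcal{H}^{t}(L(K)).
\]
The $\mathcal{H}^{t}$-measurability of $A$ combined with $\mathcal{H}^{t}(A\cap B)=0$ implies, via Caratheodory's criterion, that $\mathcal{H}^{t}(A\cup B)=\mathcal{H}^{t}(A)+\mathcal{H}^{t}(B\setminus A)=\mathcal{H}^{t}(A)+\mathcal{H}^{t}(B)$, so the two displays combine to give the crucial additivity $\mathcal{H}^{t}(L(A)\cup L(B))=\mathcal{H}^{t}(L(A))+\mathcal{H}^{t}(L(B))$.

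To pass from this additivity to the desired $\mathcal{H}^{t}(L(A)\cap L(B))=0$, I need $L(A)$ itself to be $\mathcal{H}^{t}$-measurable; granted this, Caratheodory's criterion applied with measurable set $L(A)$ and test set $L(B)$ forces $\mathcal{H}^{t}(L(A)\cap L(B))=0$. The measurability of $L(A)$ follows from Borel regularity of $\mathcal{H}^{t}$: approximate the $\mathcal{H}^{t}$-measurable set $A\subseteq K$ from inside by an $F_{\sigma}$ set $F\subseteq A$ with $\mathcal{H}^{t}(A\setminus F)=0$. As $F$ is contained in the compact set $K$ it is $\sigma$-compact, so $L(F)$ is $F_{\sigma}$ in $\mathbb{R}^{d_{2}}$ and hence Borel, while $L(A\setminus F)$ is the Lipschitz image of a null set and so $\mathcal{H}^{t}$-null. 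The main technical obstacle is this measurability step, which additionally requires a little care when $\mathcal{H}^{t}(A)=\infty$ (partition $A$ into countably many pieces of finite measure and argue piece by piece); every other step is routine bookkeeping on top of Theorem \ref{thm: Egynlo lin Im}.
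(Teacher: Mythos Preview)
Your proof is correct and follows essentially the same approach as the paper's: both apply Theorem~\ref{thm: Egynlo lin Im} to obtain additivity $\mathcal{H}^{t}(L(A))+\mathcal{H}^{t}(L(\cdot))=\mathcal{H}^{t}(L(K))$ and then finish with inclusion--exclusion/Carath\'eodory. The only differences are cosmetic: the paper first reduces to the case $B=K\setminus A$ and writes the final step as the inclusion--exclusion identity (tacitly using measurability of $L(A)$), whereas you work with $B$ directly and make that measurability step explicit; your remark about the case $\mathcal{H}^{t}(A)=\infty$ is unnecessary since $A\subseteq K$ and $\mathcal{H}^{t}(K)<\infty$ by Proposition~\ref{prop:.implicitkov}.
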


In \cite{Eroglu-dense set of rotations} Ero$\breve{\mathrm{g}}$lu
showed that if the transformation group of an SS-IFS in $\mathbb{R}^{2}$
contains a dense set of rotations around the origin then $\mathcal{H}^{s}\left(\Pi_{M}(K)\right)=0$
for all lines $M$, where $s$ denotes the similarity dimension of
the SS-IFS. Ero$\breve{\mathrm{g}}$lu's result does not give any
information about the projections when the OSC is not satisfied. Using
a different approach we generalize this result to higher dimensions
for differentiable maps in place of projections and without any separation
condition, with $s$ replaced by the Hausdorff dimension of $K$.
Both Ero$\breve{\mathrm{g}}$lu's and our proof based on finding two
or several cylinders and fixed direction in which their projection
have large overlap. Ero$\breve{\mathrm{g}}$lu uses similar arguments
to those of Simon and Solomyak \cite{Simon-Solomyak-Visibility} to
show that the projection measure has infinite upper density almost
everywhere, whilst we use a Vitali covering argument to show that
the Hausdorff measure of the image must collapse.

Let $0<l\leq d$ be integers and let $G_{d,l}$ denote the \textit{Grassmann
manifold} of $l$-dimensional linear subspaces of $\mathbb{R}^{d}$
equipped with the usual topology (see for example \cite[Section 3.9]{Mattilakonyv}).

\begin{thm}
\label{cor:Dense group lin image 0}Let $\left\{ S_{i}\right\} _{i=1}^{m}$
be an SS-IFS with attractor $K$, let $t=\dim_{H}(K)$, let $U$ be
an open neighbourhood of $K$ and assume that there exists $M\in G_{d,l}$
such that the set $\left\{ O(M):O\in\mathcal{T}\right\} $ is dense
in $G_{d,l}$ for some $1\leq l<d$. Then $\mathcal{H}^{t}\left(g(K)\right)=0$
for every continuously differentiable map $g:U\longrightarrow\mathbb{R}^{d_{2}}$
such that $\mathrm{rank}(g'(x))\leq l$ for every $x\in K$.
\end{thm}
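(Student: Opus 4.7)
\emph{Reduction to projections.} The plan is first to reduce from the $C^{1}$ map $g$ to orthogonal projections onto $l$-dimensional subspaces. If $\mathcal{H}^{t}(K)=0$, the conclusion is immediate since $g$ is Lipschitz on $K$, so assume $\mathcal{H}^{t}(K)>0$. Fix $\varepsilon>0$; uniform continuity of $g'$ on a compact neighbourhood of $K$ gives, for every sufficiently long $\mathbf{i}\in\mathcal{I}^{n}$, every $x\in K_{\mathbf{i}}$ and a fixed $z_{\mathbf{i}}\in K_{\mathbf{i}}$,
\[
\left\Vert g(x)-g(z_{\mathbf{i}})-g'(z_{\mathbf{i}})(x-z_{\mathbf{i}})\right\Vert \leq\varepsilon r_{\mathbf{i}}.
\]
Writing $x=S_{\mathbf{i}}(y)$ with $y\in K$, this places $g(K_{\mathbf{i}})$ in the $\varepsilon r_{\mathbf{i}}$-neighbourhood of a translate of the rank-$\leq l$ linear image $r_{\mathbf{i}}\bigl(g'(z_{\mathbf{i}})\circ T_{\mathbf{i}}\bigr)(K)$. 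Since any rank-$\leq l$ linear map $L:\mathbb{R}^{d}\to\mathbb{R}^{d_{2}}$ factors as $L_{2}\circ\Pi_{N}$ with $N\in G_{d,l}$ and $L_{2}$ Lipschitz, a standard covering estimate reduces the theorem to showing $\mathcal{H}^{t}(\Pi_{N}(K))=0$ for every $N\in G_{d,l}$.

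\emph{The projection measure is constant on $G_{d,l}$.} For any orthogonal $O$ one has $\Pi_{M}\circ O=O\circ\Pi_{O^{-1}(M)}$; combined with Theorem \ref{thm: Egynlo lin Im} applied to $A=K$ and $L=\Pi_{M}$, this gives $\mathcal{H}^{t}(\Pi_{O^{-1}(M)}(K))=\mathcal{H}^{t}(\Pi_{M}(K))$ for every $O\in\overline{\mathcal{T}}$. Density of $\mathcal{T}(M)$ in $G_{d,l}$, compactness of $\overline{\mathcal{T}}$, and continuity of the orbit map $O\mapsto O(M)$ together yield $\overline{\mathcal{T}}(M)=G_{d,l}$, so $c:=\mathcal{H}^{t}(\Pi_{N}(K))$ takes the same value for every $N\in G_{d,l}$.

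\emph{Vitali overlap argument.} Suppose $c>0$ for contradiction. A closed sub-semigroup of a compact group is a subgroup, so the closure of $\{T_{\mathbf{i}}:\mathbf{i}\in\bigcup_{n}\mathcal{I}^{n}\}$ in $\overline{\mathcal{T}}$ equals $\overline{\mathcal{T}}$. Pigeonholing the finite set of pairs $\{(T_{\mathbf{i}},\Pi_{M}(v_{\mathbf{i}})):|\mathbf{i}|=n\}$ into $\delta$-cells in $\overline{\mathcal{T}}\times M$ produces, at each sufficiently deep level $n$, many pairs of distinct words $\mathbf{i}_{1},\mathbf{i}_{2}$ whose projections $\Pi_{M}(K_{\mathbf{i}_{1}})$ and $\Pi_{M}(K_{\mathbf{i}_{2}})$ are within Hausdorff distance $O(\delta r_{\mathbf{i}_{1}})$, even though $K_{\mathbf{i}_{1}}$ and $K_{\mathbf{i}_{2}}$ are essentially disjoint. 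Combining this quantitative overlap with the almost-disjointness provided by Corollary \ref{cor: SS almost disjoint A B}, one builds a Vitali-type cover of $\Pi_{M}(K)$ of total $t$-mass at most $(1-\eta)c$ for some fixed $\eta>0$, and iterating at deeper scales forces $c\leq(1-\eta)^{k}c\to 0$, the desired contradiction. The main obstacle is this last step: extracting a measure saving $\eta$ uniform in the level $n$ from merely approximate overlaps requires delicate control of the Hausdorff $t$-mass of small neighbourhoods of the projected cylinders and a judicious selection of the pairs exploited at each scale.
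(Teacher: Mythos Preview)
Your reduction to orthogonal projections and the constancy argument in Step~2 are correct and match the paper's use of Theorem~\ref{thm: Egynlo lin Im}. The serious gap is in Step~3, and you yourself flag it at the end.

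The pigeonhole argument does not give what you need. Pigeonholing the pairs $(T_{\mathbf{i}},\Pi_{M}(v_{\mathbf{i}}))$ into $\delta$-cells of $\overline{\mathcal{T}}\times M$ produces two words with $\lvert\Pi_{M}(v_{\mathbf{i}_{1}})-\Pi_{M}(v_{\mathbf{i}_{2}})\rvert<\delta$, an \emph{absolute} bound; but for $\Pi_{M}(K_{\mathbf{i}_{1}})$ and $\Pi_{M}(K_{\mathbf{i}_{2}})$ to be Hausdorff-close at their own scale you need the translation error to be $O(\delta r_{\mathbf{i}_{1}})$. Refining the cells to scale $r_{\mathbf{i}}$ in the $M$-coordinate costs $\sim r_{\mathbf{i}}^{-l}$ cells, and since $t$ may be strictly less than $l$ the number of level-$n$ cylinders need not beat this. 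There is no mechanism here to force collisions in a fixed direction $M$. Nor are level-$n$ cylinders ``essentially disjoint'' without a separation condition, so the disjointness you invoke also needs work.

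The paper's argument avoids both obstacles by a different idea. It first builds, once and for all, two disjoint cylinders $K_{a},K_{b}$ with $r_{a}=r_{b}$ and $T_{a},T_{b}$ near the identity (take $a=\mathbf{i}_{1}*\mathbf{i}_{2}$, $b=\mathbf{i}_{2}*\mathbf{i}_{1}$ from Proposition~\ref{lem:Egyenlito disjoint lem T=00003Dinfty}), so that $K_{b}\approx K_{a}+v$ for a fixed vector $v$. Then, instead of searching for a collision in a fixed direction, it \emph{chooses} the direction: by Lemma~\ref{lem: Mag lemma} the density hypothesis yields $O_{0}\in\overline{\mathcal{T}}$ with $L\circ O_{0}(v)=0$, so $L\circ O_{0}(K_{a})$ and $L\circ O_{0}(K_{b})$ overlap almost exactly. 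Proposition~\ref{lem:Sok reszes Egyenlito disjoint lem} then propagates this single overlap throughout a disjoint Vitali-type family $\{K_{\mathbf{i}*a},K_{\mathbf{i}*b}\}_{\mathbf{i}\in\mathcal{I}_{\infty}}$ of full $\mathcal{H}^{t}$-measure, with all $T_{\mathbf{i}}$ near the identity; covering each matched pair by a single family of sets gives a clean factor $\tfrac{1}{2}$ saving and forces $\mathcal{H}_{\infty}^{t}(L\circ O_{0}(K))\leq\tfrac{1}{2}\mathcal{H}_{\infty}^{t}(L(K))$, hence $c=0$. The missing ingredient in your sketch is precisely this use of the density of the orbit to \emph{select} a rotation annihilating a prescribed translation, rather than hunting for accidental overlaps in a fixed direction.
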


If $\mathrm{rank}(g'(x))=d$ for some $x\in K$ then $g$ is a bi-Lipschitz
function between a neigbourhood $V$ of $x$ and $g(V)$ and hence
$\mathcal{H}^{t}\left(g(K)\right)=0$ if and only if $\mathcal{H}^{t}(K)=0$.

We note that the assumption, that $\left\{ O(M):O\in\mathcal{T}\right\} $
is dense for some $M\in G_{d,l}$, is equivalent to that $\left\{ O(M):O\in\mathcal{T}\right\} $
is dense for each $M\in G_{d,l}$.

It was shown by Peres and Shmerkin \cite[Theorem 5]{Peres-Schmerkin Resonance between Cantor sets}
on the plane under the conditions of Theorem \ref{cor:Dense group lin image 0}
that $\dim_{H}\left(\Pi_{M}(K)\right)=\min\left\{ t,1\right\} $ for
every line $M$. This was generalized to higher dimensions by Hochman
and Shmerkin \cite[Corollary 1.7]{Hochman-Schmerkin-local entropy}
for SS-IFS that satisfies the SSC and the SSC was relaxed by Falconer
and Jin \cite[Corollary 5.2]{Falconer-Jin} to the `strong variational
principle'. We use the result of Hochman and Shmerkin and a dimension
approximation method (Proposition \ref{lem:dim approx lem}) to deduce
the same conclusion without any separation condition.

\begin{thm}
\label{thm:Dense group dim conserv}Let $\left\{ S_{i}\right\} _{i=1}^{m}$
be an SS-IFS with attractor $K$, let $t=\dim_{H}(K)$, let $U$ be
an open neighbourhood of $K$ and assume that there exists $M\in G_{d,l}$
such that the set $\left\{ O(M):O\in\mathcal{T}\right\} $ is dense
in $G_{d,l}$ for some $1\leq l<d$. Then $\dim_{H}\left(g(K)\right)=\min\left\{ t,l\right\} $
for every continuously differentiable map $g:U\longrightarrow\mathbb{R}^{l}$
such that $\mathrm{rank}(g'(x))=l$ for some $x\in K$.
\end{thm}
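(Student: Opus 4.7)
The plan is to combine the Hochman--Shmerkin theorem \cite[Corollary 1.7]{Hochman-Schmerkin-local entropy} with the dimension approximation of Proposition \ref{lem:dim approx lem} to push their conclusion from the SSC setting into the general overlapping setting. The upper bound $\dim_{H}\left(g(K)\right)\leq\min\{t,l\}$ is immediate, since $g$ is Lipschitz on the compact set $K$ (hence $\dim_{H}\left(g(K)\right)\leq t$) and $g(K)\subseteq\mathbb{R}^{l}$ (hence $\dim_{H}\left(g(K)\right)\leq l$). For the lower bound I first pass to a cylinder on which $g$ is a submersion. Fix $x_{0}\in K$ with $\mathrm{rank}\left(g'(x_{0})\right)=l$; by continuity there is a neighborhood $V\ni x_{0}$ on which $g'$ has rank $l$ throughout, and there exists $\mathbf{i}_{0}\in\mathcal{I}^{*}$ with $K_{\mathbf{i}_{0}}\subseteq V$. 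The cylinder $K_{\mathbf{i}_{0}}$ is the attractor of the conjugated SS-IFS $\left\{ S_{\mathbf{i}_{0}}\circ S_{i}\circ S_{\mathbf{i}_{0}}^{-1}\right\} _{i=1}^{m}$, whose transformation group $T_{\mathbf{i}_{0}}\mathcal{T}T_{\mathbf{i}_{0}}^{-1}$ still has a dense orbit in $G_{d,l}$ at the subspace $T_{\mathbf{i}_{0}}(M)$. Since $\dim_{H}K_{\mathbf{i}_{0}}=t$, I may replace $K$ by $K_{\mathbf{i}_{0}}$ and assume throughout that $g$ has rank $l$ on an open neighborhood of $K$.

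Fix $\varepsilon>0$. I use Proposition \ref{lem:dim approx lem} to construct a finite $\mathcal{J}\subseteq\mathcal{I}^{*}$ such that the sub-SS-IFS $\left\{ S_{\mathbf{j}}:\mathbf{j}\in\mathcal{J}\right\} $ satisfies SSC, its attractor $K_{\varepsilon}\subseteq K$ satisfies $\dim_{H}K_{\varepsilon}\geq t-\varepsilon$, \emph{and} the transformation group generated by $\left\{ T_{\mathbf{j}}:\mathbf{j}\in\mathcal{J}\right\} $ still has a dense orbit in $G_{d,l}$. The additional density requirement is arrangeable because the multiplicative semigroup generated by $\{T_{i}\}_{i=1}^{m}$ inside the compact topological group $\mathbb{O}_{d}$ has closure a closed subgroup (a closed sub-semigroup of a compact group is automatically a group) and therefore coincides with $\overline{\mathcal{T}}$. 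Consequently the composition words $\{T_{\mathbf{i}}:\mathbf{i}\in\mathcal{I}^{*}\}$ are dense in $\overline{\mathcal{T}}$, so I may enlarge $\mathcal{J}$ to include finitely many further composition words whose orthogonal parts form a sufficiently fine net in $\overline{\mathcal{T}}$. Performing this enlargement at large enough iteration depth relative to the separation gap of the existing SSC sub-IFS preserves SSC and only increases the dimension bound.

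Applying \cite[Corollary 1.7]{Hochman-Schmerkin-local entropy} to the sub-IFS constructed above then yields
\[
\dim_{H}\left(g(K_{\varepsilon})\right)=\min\left\{ \dim_{H}K_{\varepsilon},l\right\} \geq\min\{t-\varepsilon,l\},
\]
and since $K_{\varepsilon}\subseteq K$ we conclude $\dim_{H}\left(g(K)\right)\geq\min\{t-\varepsilon,l\}$; letting $\varepsilon\downarrow0$ finishes the lower bound. I expect the main technical obstacle to be precisely the simultaneous construction in the middle paragraph. Proposition \ref{lem:dim approx lem} on its own supplies an SSC sub-IFS with near-optimal Hausdorff dimension, but its transformation group is a priori only a sub-semigroup of $\mathcal{T}$ and may fail the hypothesis of Hochman--Shmerkin. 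Grafting a dense-orbit condition onto such a sub-IFS while keeping both SSC and the dimension estimate intact is the one step not literally supplied by the results cited earlier, and verifying it rigorously requires careful bookkeeping of iteration depths and separation constants.
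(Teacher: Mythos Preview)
Your overall strategy is exactly the paper's: reduce to a cylinder where $g$ has full rank, invoke Proposition~\ref{lem:dim approx lem} to get an SSC sub-IFS of dimension at least $t-\varepsilon$ whose transformation group still acts with dense orbit on $G_{d,l}$, apply Hochman--Shmerkin \cite[Corollary 1.7]{Hochman-Schmerkin-local entropy}, and let $\varepsilon\to 0$. The order of the two reductions is swapped (you localize first, the paper localizes last), but that is cosmetic.

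The one thing to flag is that you have misread Proposition~\ref{lem:dim approx lem}. Its conclusion already includes that the transformation group $\widehat{\mathcal{T}}$ of the approximating SSC system is \emph{dense in $\mathcal{T}$}; this is stated explicitly, and the paper's proof of the proposition spends most of its effort (via Proposition~\ref{lem:Jordanos lem} and Kronecker's theorem) on exactly this point. Since the orbit map $O\mapsto O(M)$ is continuous, density of $\widehat{\mathcal{T}}$ in $\mathcal{T}$ immediately gives that $\{O(M):O\in\widehat{\mathcal{T}}\}$ is dense in $G_{d,l}$ whenever $\{O(M):O\in\mathcal{T}\}$ is. So the ``main technical obstacle'' you anticipate---grafting a dense-orbit condition onto the sub-IFS while preserving SSC and the dimension bound---does not exist: Proposition~\ref{lem:dim approx lem} hands you all three properties simultaneously, and the paper's proof of Theorem~\ref{thm:Dense group dim conserv} is accordingly a short paragraph. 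Your sketched workaround (enlarge $\mathcal{J}$ by deep words and hope SSC survives) would in fact need exactly the kind of careful fixed-point and separation analysis that the proof of Proposition~\ref{lem:dim approx lem} already carries out, so you would end up reproving that proposition rather than circumventing it.
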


We can state a corollary of Theorem \ref{thm:Dense group dim conserv}
which applies to $g:U\longrightarrow\mathbb{R}^{d_{2}}$ where $d_{2}$
may be greater than $l$.
\begin{cor}
\label{cor:Dense diff dim conserv COR}Let $\left\{ S_{i}\right\} _{i=1}^{m}$
be an SS-IFS with attractor $K$, let $t=\dim_{H}(K)$, let $U$ be
an open neighbourhood of $K$ and assume that there exists $M\in G_{d,l}$
such that the set $\left\{ O(M):O\in\mathcal{T}\right\} $ is dense
in $G_{d,l}$ for some $1\leq l<d$. If $g:U\longrightarrow\mathbb{R}^{d_{2}}$
is a continuously differentiable map such that $\mathrm{rank}(g'(x))=l$
for every $x\in K$ and either of the following conditions is satisfied

(i) $g\in C^{\infty}$,

(ii) $t\leq l$,

\noindent then $\dim_{H}\left(g(K)\right)=\min\left\{ t,l\right\} $.
\end{cor}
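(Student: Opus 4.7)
The plan is to reduce Corollary \ref{cor:Dense diff dim conserv COR} to Theorem \ref{thm:Dense group dim conserv} by composing $g$ with a well-chosen linear projection, splitting into the two cases (i) and (ii) only when establishing the upper bound. Throughout, the inequality $\dim_{H}g(K)\le t$ is free: $g$ is $C^{1}$ on the open set $U\supseteq K$ and $K$ is compact, so $g$ is Lipschitz on $K$ with some constant, and Lipschitz maps do not raise Hausdorff dimension.

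For the lower bound $\dim_{H}g(K)\ge\min\{t,l\}$ I would fix any $x_{0}\in K$ with $\mathrm{rank}(g'(x_{0}))=l$ (such an $x_{0}$ exists by hypothesis, since rank is exactly $l$ at every point of $K$), set $V=\mathrm{image}(g'(x_{0}))\subseteq\mathbb{R}^{d_{2}}$, and define $\pi\colon\mathbb{R}^{d_{2}}\to\mathbb{R}^{l}$ to be the orthogonal projection onto $V$ followed by a linear isometry $V\cong\mathbb{R}^{l}$. The composition $\pi\circ g\colon U\to\mathbb{R}^{l}$ is continuously differentiable and its derivative at $x_{0}$ equals $\pi\circ g'(x_{0})$; since $\pi$ is the identity on $V=\mathrm{image}(g'(x_{0}))$, this derivative has rank $l$. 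Theorem \ref{thm:Dense group dim conserv} therefore applies to $\pi\circ g$ and yields $\dim_{H}(\pi\circ g)(K)=\min\{t,l\}$. As $\pi$ is Lipschitz, this gives $\dim_{H}g(K)\ge\dim_{H}(\pi\circ g)(K)=\min\{t,l\}$, independently of which of (i) or (ii) is assumed.

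It remains to match this from above. In case (ii) the assumption $t\le l$ makes the free bound $\dim_{H}g(K)\le t=\min\{t,l\}$ already sufficient, and nothing further is required. In case (i) with $t>l$ one additionally needs $\dim_{H}g(K)\le l$, and for this I would invoke a generalised Morse--Sard theorem in the form due to Federer: for a $C^{k}$ map $g$ the critical set $E_{l}=\{x\in U:\mathrm{rank}(g'(x))\le l\}$ satisfies $\dim_{H}g(E_{l})\le l+(d-l)/k$. Since $g\in C^{\infty}$ in case (i) we may let $k\to\infty$ to conclude $\dim_{H}g(E_{l})\le l$, and because $K\subseteq E_{l}$ by the rank hypothesis, $\dim_{H}g(K)\le l$ follows.

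The main obstacle is precisely this upper bound in case (i). The rank condition is imposed only on $K$ itself, and because rank is merely lower semicontinuous it may jump strictly above $l$ in every neighbourhood of $K$; consequently no constant-rank or implicit-function factorisation of $g$ near $K$ is available, and one is forced into a Sard-type estimate that genuinely requires the higher smoothness of (i). Under (ii) this obstruction disappears because the trivial Lipschitz bound $\dim_{H}g(K)\le t\le l$ is already tight.
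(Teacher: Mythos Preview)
Your proof is correct and follows essentially the same approach as the paper: for the lower bound you compose $g$ with a rank-$l$ linear map into $\mathbb{R}^{l}$ (you use orthogonal projection onto $\mathrm{image}(g'(x_{0}))$, the paper uses a coordinate projection picking out $l$ linearly independent rows of $g'(x_{0})$) and apply Theorem~\ref{thm:Dense group dim conserv}; for the upper bound in case~(i) both you and the paper invoke the Federer--Morse--Sard estimate \cite[Theorem~3.4.3]{Federer book}, and case~(ii) is handled identically via the Lipschitz bound.
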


In the planar case $\left|\mathcal{T}\right|=\infty$ is equivalent
to that $\left\{ O(M):O\in\mathcal{T}\right\} $ is dense in $G_{2,1}$
for every $M\in G_{2,1}$. Furthermore, it can be easily shown that
in the planar case $\left|\mathcal{T}\right|=\infty$ also implies
that $\mathcal{T}$ contains a rotation of infinite order. Example
\ref{Ex: vegyes proj} shows that in general $\left|\mathcal{T}\right|=\infty$
does not imply either the conclusion of Theorem \ref{thm:Dense group dim conserv}
or the conclusion of Theorem \ref{cor:Dense group lin image 0} in
higher dimensions.

Example \ref{cannot replace} shows that neither the conclusion of
Theorem \ref{thm:Dense group dim conserv} nor the conclusion of Theorem
\ref{cor:Dense group lin image 0} necessarily remain true if we replace
$g$ with a Lipschitz function that is a composition of an orthogonal
projection and a bi-Lipschitz map.

In \cite{Furstenberg-dimension conservation} Furstenberg introduces
the definition of a `dimension conserving map'. If $f:A\longrightarrow\mathbb{R}^{d_{2}}$
is a Lipschitz map where $A\subseteq\mathbb{R}^{d}$ we say that $f$
is \textsl{dimension conserving} if, for some $\delta\geq0$,
\[
\delta+\dim_{H}\left\{ y\in f(A):\dim_{H}(f^{-1}(y))\geq\delta\right\} \geq\dim_{H}A
\]
with that convention that $\dim_{H}(\emptyset)=-\infty$ so that $\delta$
cannot be chosen too large. Furstenberg also introduces `mini- and
micro-sets of a set', and a compact set is defined to be `homogeneous'
if all of its micro-sets are also mini-sets. Furstenberg`s main theorem
\cite[Theorem 6.2]{Furstenberg-dimension conservation} states that
the restriction of a linear map to a homogeneous compact set is dimension
conserving. He suggests that if $K$ is a self-similar set, $\mathcal{T}$
has only one element and the SSC is satisfied then $K$ is homogeneous.
One can show that $K$ is homogeneous even if $\mathcal{T}$ is finite
and the SSC is satisfied. Thus for such $K$ the restriction of any
linear map to $K$ is dimension conserving even though, by Theorem
\ref{thm:dimension drop SS}, there must be a projection under which
the dimension drops. Theorem \ref{thm:Dense group dim conserv} implies
that if $\left\{ O(M):O\in\mathcal{T}\right\} $ is dense in $G_{d,l}$
where $\dim_{H}K\leq l$, then the restriction of $g$ to $K$ is
dimension conserving, where $g$ is a continuously differentiable
map of rank $l$.

The following proposition is a useful tool for generalizing results
about Hausdorff dimension in case of SSC to the case with no separation
condition.

\begin{prop}
\label{lem:dim approx lem}Let $\left\{ S_{i}\right\} _{i=1}^{m}$
be an SS-IFS with attractor $K$. For all $\varepsilon>0$ there exists
an SS-IFS $\left\{ \widehat{S_{i}}\right\} _{i=1}^{\widehat{m}}$
that satisfies the SSC with attractor $\widehat{K}$ such that $\widehat{K}\subseteq K$,
$\dim_{H}K-\varepsilon<\dim_{H}\widehat{K}$ and the transformation
group $\widehat{\mathcal{T}}$ of $\left\{ \widehat{S_{i}}\right\} _{i=1}^{\widehat{m}}$
is dense in $\mathcal{T}$.
\end{prop}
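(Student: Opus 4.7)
The plan is to realize the new system $\{\widehat{S}_{i}\}_{i=1}^{\widehat{m}}$ as a sub-IFS $\{S_{\mathbf{i}}\}_{\mathbf{i}\in\Gamma}$ indexed by a finite antichain $\Gamma\subseteq\bigcup_{k\geq 1}\mathcal{I}^{k}$; then $\widehat{K}\subseteq K$ automatically, and the SSC reduces to pairwise disjointness of the cylinders $\{K_{\mathbf{i}}\}_{\mathbf{i}\in\Gamma}$. Therefore the whole task reduces to exhibiting a $\Gamma$ with three features: (a) pairwise disjoint cylinders, (b) similarity dimension $>t-\varepsilon$ where $t=\dim_{H}K$, and (c) the generated subgroup $\widehat{\mathcal{T}}=\langle T_{\mathbf{i}}:\mathbf{i}\in\Gamma\rangle$ dense in $\mathcal{T}$.

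For (b) I would fix $s_{0}\in(t-\varepsilon/2,t)$. Since $\mathcal{H}^{s_{0}}(K)=\infty$, Frostman's lemma supplies a Borel probability measure $\mu$ on $K$ with $\mu(B(x,r))\leq Cr^{s_{0}}$. For small $\delta>0$ let $\Lambda_{\delta}$ be the stopping-time antichain consisting of words $\mathbf{i}\in\mathcal{I}^{*}$ of minimal length with $r_{\mathbf{i}}\leq\delta$, so that $\{K_{\mathbf{i}}\}_{\mathbf{i}\in\Lambda_{\delta}}$ covers $K$ with $\operatorname{diam}K_{\mathbf{i}}\leq C_{1}\delta$ and $r_{\mathbf{i}}\in(r_{\min}\delta,\delta]$. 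A greedy Vitali-style extraction picks $\mathbf{i}_{1}\in\Lambda_{\delta}$, removes every $\mathbf{j}\in\Lambda_{\delta}$ with $K_{\mathbf{j}}\cap K_{\mathbf{i}_{1}}\neq\emptyset$ (these all lie in a ball of $\mu$-mass $\lesssim\delta^{s_{0}}$), and iterates until $\Lambda_{\delta}$ is exhausted. The resulting pairwise-disjoint family $\Gamma_{\delta}$ satisfies $|\Gamma_{\delta}|\gtrsim\delta^{-s_{0}}$, whence $\sum_{\mathbf{i}\in\Gamma_{\delta}}r_{\mathbf{i}}^{s_{0}}\geq c>0$ uniformly in $\delta$. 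The similarity dimension $s^{*}$ of $\{S_{\mathbf{i}}\}_{\mathbf{i}\in\Gamma_{\delta}}$ (defined by $\sum r_{\mathbf{i}}^{s^{*}}=1$) therefore satisfies $s^{*}\geq\log|\Gamma_{\delta}|/\log(1/(r_{\min}\delta))\to s_{0}$ as $\delta\to 0$, so $s^{*}>t-\varepsilon$ once $\delta$ is small enough.

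For (c) I would exploit the algebraic identity
\[
T_{(j,\mathbf{u}^{*})}\circ T_{\mathbf{u}^{*}}^{-1}=T_{j}\quad\text{for every }j\in\mathcal{I}\text{ and }\mathbf{u}^{*}\in\mathcal{I}^{*}.
\]
Pick a point $x_{0}\in K$ at which $x_{0},S_{1}(x_{0}),\ldots,S_{m}(x_{0})$ are pairwise distinct (available since $|K|\geq 2$) and a non-constant word $\mathbf{u}^{*}$ with $K_{\mathbf{u}^{*}}\subseteq B(x_{0},\eta)$ for a small $\eta>0$. Then $K_{(j,\mathbf{u}^{*})}=S_{j}(K_{\mathbf{u}^{*}})\subseteq B(S_{j}(x_{0}),r_{j}\eta)$, and for $\eta$ small enough the $m+1$ cylinders $K_{\mathbf{u}^{*}},K_{(1,\mathbf{u}^{*})},\ldots,K_{(m,\mathbf{u}^{*})}$ are pairwise disjoint. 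To graft this family onto $\Gamma_{\delta}$ I would rerun the Frostman extraction while rejecting any $\mathbf{j}\in\Lambda_{\delta}$ whose $K_{\mathbf{j}}$ meets the small ``protected region'' $\mathcal{R}=\bigcup_{j=0}^{m}B(S_{j}(x_{0}),r_{j}\eta+2C_{1}\delta)$ (with $S_{0}=\mathrm{id}$); $\mathcal{R}$ has $\mu$-mass $O(\eta^{s_{0}})$, so the loss is negligible and (b) still holds. Setting $\Gamma=\Gamma_{\delta}'\cup\{\mathbf{u}^{*},(1,\mathbf{u}^{*}),\ldots,(m,\mathbf{u}^{*})\}$ delivers (a), (b) and (c) simultaneously: the cylinders are disjoint by construction, the similarity dimension still exceeds $t-\varepsilon$, and $\widehat{\mathcal{T}}\ni T_{j}$ for each $j$, hence $\widehat{\mathcal{T}}=\mathcal{T}$, which is trivially dense in itself.

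The main obstacle is the dimension-approximation step (b): the greedy Frostman extraction must be carried out with quantitative control so that the similarity dimension of $\Gamma_{\delta}$ genuinely converges to $s_{0}$ as $\delta\to 0$, which requires careful bookkeeping of the Vitali constants and of their dependence on $\delta$ and $r_{\min}$. Once (b) is handled, step (c) becomes essentially cosmetic thanks to the displayed identity, which reduces density to arranging $m+1$ explicit cylinders to be disjoint inside a small ball.
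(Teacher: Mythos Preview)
Your argument is essentially correct and takes a genuinely different route from the paper, but there is one gap in step~(c) that you should address.

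\medskip
\textbf{The gap.} You assert that some $x_{0}\in K$ with $x_{0},S_{1}(x_{0}),\ldots,S_{m}(x_{0})$ pairwise distinct exists ``since $|K|\ge 2$''. This can fail. If $K$ lies in a proper affine subspace $V$ of $\mathbb{R}^{d}$ and two maps $S_{i}\neq S_{j}$ happen to agree on $V$ (equivalently $r_{i}=r_{j}$ and $T_{i},T_{j}$ agree on the direction of $V$ but differ on $V^{\perp}$), then $S_{i}(x)=S_{j}(x)$ for every $x\in K$, so no such $x_{0}$ exists; yet $T_{i}\neq T_{j}$, and you must still pick up $T_{j}$ in $\widehat{\mathcal{T}}$. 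In that situation including both $(i,\mathbf{u}^{*})$ and $(j,\mathbf{u}^{*})$ in $\Gamma$ forces $S_{(i,\mathbf{u}^{*})}(\widehat{K})=S_{(j,\mathbf{u}^{*})}(\widehat{K})$, destroying the SSC. The fix is easy but should be stated: drop the single base word $\mathbf{u}^{*}$ and instead, inductively for each $j$, choose $x_{j}\in K$ that is not the fixed point of $S_{j}$ and lies outside the finitely many cylinders already selected, then take $\mathbf{u}_{j}$ with $K_{\mathbf{u}_{j}}$ in a tiny ball around $x_{j}$ so that $K_{\mathbf{u}_{j}}$ and $K_{(j,\mathbf{u}_{j})}$ are disjoint from one another and from everything chosen before. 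Including the $2m$ words $\{\mathbf{u}_{j},(j,\mathbf{u}_{j})\}_{j=1}^{m}$ in $\Gamma$ then gives $T_{j}=T_{(j,\mathbf{u}_{j})}T_{\mathbf{u}_{j}}^{-1}\in\widehat{\mathcal{T}}$ for every $j$, and the rest of your argument goes through unchanged. (When all $S_{i}|_{K}$ are distinct your original choice of a single $x_{0}$ does work: the bad loci $\{x:S_{i}(x)=S_{j}(x)\}$ are then proper affine subspaces of $\mathrm{aff}(K)$, and one checks via self-similarity that $K$ cannot be covered by finitely many such subspaces.)

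\medskip
\textbf{Comparison with the paper.} The paper separates the two tasks in the opposite order: it first manufactures $m$ iterated maps $\widehat{S}_{i}=S_{\mathbf{i}_{i}}^{k_{i}}$ with pairwise disjoint images of $K$ whose orthogonal parts generate a \emph{dense} subgroup of $\mathcal{T}$, and only afterwards grafts on a large disjoint family inside a leftover cylinder to boost the similarity dimension. The delicate point in the paper is precisely your step~(c): to ensure that high powers $T_{\mathbf{i}_{i}}^{k_{i}}$ still generate a dense subgroup, it proves a separate result (Proposition~\ref{lem:Jordanos lem}) via Kronecker's simultaneous approximation theorem. Your algebraic identity $T_{(j,\mathbf{u})}T_{\mathbf{u}}^{-1}=T_{j}$ sidesteps this entirely and even yields $\widehat{\mathcal{T}}=\mathcal{T}$ exactly rather than merely densely; this is a real simplification. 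Your step~(b) and the paper's dimension step are essentially the same counting argument---you phrase it with a Frostman measure, the paper with a direct lower bound on $\mathcal{H}^{t-\varepsilon/2}_{3\delta}$---and both lead to $|\Gamma_{\delta}|\gtrsim\delta^{-s_{0}}$ and hence similarity dimension $\to s_{0}$.
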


The planar case of Proposition \ref{lem:dim approx lem} was known
before and was used, for example, in \cite{Peres-Schmerkin Resonance between Cantor sets,Orponen-distance set conjecture}.
The proof in the planar case is not difficult and in three dimensions
is not more complicated. However, the higher dimensional case is more
subtle and the proof relies on Kronecker's simultaneous approximation
theorem \cite[Theorem 442]{Hardy-Wright-An introduction to the theory of numbers}.

All our results are valid without assuming OSC. The following proposition
develops a new tool that serves the role of separation conditions
in the proofs. In Section \ref{sec:Main-Lemma} we state two other
variants of Proposition \ref{lem:Egyenlito disjoint lem T=00003Dinfty}
and we hope that such variants of Proposition \ref{lem:Egyenlito disjoint lem T=00003Dinfty}
may help to extend other results to settings without any separation
condition. The proof of Proposition \ref{lem:Egyenlito disjoint lem T=00003Dinfty}
relies on Vitali's covering theorem \cite[Theorem 1.10]{falconer geom of fract sets}.

\begin{prop}
\label{lem:Egyenlito disjoint lem T=00003Dinfty}Let $\left\{ S_{i}\right\} _{i=1}^{m}$
be an SS-IFS with attractor $K$ and let $t=\dim_{H}(K)$. Let $O\in\overline{\mathcal{T}}$
be arbitrary and $\delta>0$. Then there exists $\mathcal{I}_{\infty}\subseteq\bigcup_{k=1}^{\infty}\mathcal{I}^{k}$
such that $\left\Vert T_{\boldsymbol{\mathbf{i}}}-O\right\Vert <\delta$
for all $\boldsymbol{\mathbf{i}}\in\mathcal{I}_{\infty}$, $K_{\boldsymbol{\mathbf{i}}}\cap K_{\boldsymbol{\mathbf{j}}}=\emptyset$
for $\boldsymbol{\mathbf{i}},\boldsymbol{\mathbf{j}}\in\mathcal{I}_{\infty}$,
$\boldsymbol{\mathbf{i}}\neq\boldsymbol{\mathbf{j}}$, and $\mathcal{H}^{t}\left(K\setminus\left(\bigcup_{\boldsymbol{\mathbf{i}}\in\mathcal{I}_{\infty}}K_{\boldsymbol{\mathbf{i}}}\right)\right)=0$.
\end{prop}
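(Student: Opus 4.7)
My plan combines a semigroup-approximation argument with a Vitali-type covering. First, I would establish that the positive semigroup $\mathcal{T}^+ := \{T_\mathbf{i} : \mathbf{i} \in \bigcup_{k\geq 1}\mathcal{I}^k\}$ is dense in $\overline{\mathcal{T}}$: its closure $\overline{\mathcal{T}^+}$ is a closed subsemigroup of the compact group $\overline{\mathcal{T}}$, and the standard argument that a subsequential limit of $\{h^n\}_{n\geq 1}$ produces an idempotent (which must be the identity in a group) shows that any closed subsemigroup of a compact group is in fact a subgroup; since $\overline{\mathcal{T}^+}$ contains the generators $T_1,\ldots,T_m$, it equals $\overline{\mathcal{T}}$. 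By compactness I then choose finitely many words $\mathbf{j}_1,\ldots,\mathbf{j}_p$ of length at most some $N=N(\delta)$ such that $\{T_{\mathbf{j}_\ell}\}_\ell$ is a $\delta/2$-net in $\overline{\mathcal{T}}$. For any word $\mathbf{i}$, since $T_\mathbf{i}^{-1}O \in \overline{\mathcal{T}}$, some $\ell=\ell(\mathbf{i})$ satisfies $\|T_{\mathbf{j}_\ell} - T_\mathbf{i}^{-1}O\| < \delta/2$; invariance of the operator norm under left multiplication by the orthogonal map $T_\mathbf{i}$ then gives $\|T_{\mathbf{i}\mathbf{j}_\ell} - O\| < \delta/2 < \delta$. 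Hence every word admits a ``good extension'' $\mathbf{i}\mathbf{j}_{\ell(\mathbf{i})}$ of bounded extra length, with $r_{\mathbf{i}\mathbf{j}_{\ell(\mathbf{i})}} \geq r_{\min}^N r_\mathbf{i}$, where $r_{\min} := \min_i r_i > 0$.

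Next I would construct the Vitali cover. For each $x \in K$ and each $\varepsilon > 0$, fix a coding $\omega$ of $x$, choose $k$ with $r_{\omega|_k}\,\mathrm{diam}(K) < \varepsilon$, and apply the extension to obtain a good cylinder $K_{\omega|_k\mathbf{j}_{\ell(\omega|_k)}} \subseteq K_{\omega|_k} \subseteq B(x,\varepsilon)$. This yields a Vitali class of closed balls for $K$ in which each ball contains an associated good cylinder. Applying Vitali's covering theorem \cite[Theorem~1.10]{falconer geom of fract sets}, I extract a countable disjoint subfamily of balls $\{B_n\}_n$ with $\mathcal{H}^t(K\setminus\bigcup_n B_n) = 0$; the associated good cylinders $\{K_{\mathbf{i}_n}\}_n$ are then pairwise disjoint (being contained in disjoint balls) and each satisfies $\|T_{\mathbf{i}_n} - O\| < \delta$.

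The main obstacle is to upgrade the $\mathcal{H}^t$-null residual from the balls to the cylinders themselves, since $K \cap B_n$ may strictly contain $K_{\mathbf{i}_n}$. My plan is to handle this by iteration. The general upper $\mathcal{H}^t$-density bound $\mathcal{H}^t(K \cap B(x,r)) \leq C r^t$, valid for $\mathcal{H}^t$-a.e.\ $x \in K$ at small scales, combined with the scaling $\mathcal{H}^t(K_{\mathbf{i}_n}) = r_{\mathbf{i}_n}^t \mathcal{H}^t(K)$ and the lower bound on $r_{\mathbf{i}_n}$ in terms of $\mathrm{radius}(B_n)$ from the construction, shows that the selected cylinders capture at least a fixed positive fraction $c > 0$ of $\mathcal{H}^t(K)$. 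Applying the same construction to the residual $K \setminus \bigcup_n K_{\mathbf{i}_n}$---where a Lebesgue-type density theorem for $\mathcal{H}^t$ ensures that $\mathcal{H}^t$-a.e.\ residual point has enough room for a fresh Vitali class disjoint from previously chosen cylinders---the residual $\mathcal{H}^t$-measure decreases by the factor $1-c$. Iterating countably many times drives the residual to zero, and $\mathcal{I}_\infty$ is the union of the chosen index sets across all iterations. The degenerate cases $\mathcal{H}^t(K) = 0$ (trivial, $\mathcal{I}_\infty = \emptyset$) and $\mathcal{H}^t(K) = \infty$ (exhaust $K$ by countably many subsets of finite $\mathcal{H}^t$-measure) are handled separately.
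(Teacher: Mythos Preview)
Your overall strategy matches the paper's: establish that the positive semigroup is dense in $\overline{\mathcal{T}}$, extract finitely many ``correction words'' $\mathbf{j}_1,\ldots,\mathbf{j}_p$ of bounded length so that every word has a good extension, run a Vitali argument, and iterate with a fixed contraction factor. The gap is in the iteration. When you pass to the residual $R=K\setminus\bigcup_n K_{\mathbf{i}_n}$, your appeal to a Lebesgue density theorem does not deliver the disjointness you need. Density at a point $x\in R$ tells you that $\mathcal{H}^t\bigl(F_1\cap B(x,r)\bigr)$ is small compared to $\mathcal{H}^t\bigl(K\cap B(x,r)\bigr)$, where $F_1=\bigcup_n K_{\mathbf{i}_n}$; it does \emph{not} guarantee that the particular good cylinder $K_{\omega|_k\mathbf{j}_{\ell}}\subseteq B(x,r)$ you produce is disjoint from $F_1$. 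Without any separation condition cylinders are not nested, so $K_{\omega|_k\mathbf{j}_{\ell}}$ can meet many of the previously chosen $K_{\mathbf{i}_n}$ even when those intersections carry negligible measure. Consequently your second-round cylinders need not be disjoint from the first-round ones, and the union over all rounds need not index a pairwise disjoint family.

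The paper sidesteps this by running Vitali directly on cylinders rather than on balls, and then iterating \emph{inside} each cylinder. At stage $n$, each bad cylinder $K_{\mathbf{i}}$ is replaced at stage $n+1$ by the good sub-cylinder $K_{\mathbf{i}*\mathbf{j}_i}$ together with a Vitali cover of $K_{\mathbf{i}}\setminus K_{\mathbf{i}*\mathbf{j}_i}$ by further sub-cylinders of $K_{\mathbf{i}}$. Since every new cylinder at every stage is a sub-cylinder of a cylinder from the previous stage, pairwise disjointness propagates automatically, and the good sub-cylinder carries at least an $r_{\min}^t$ fraction of $\mathcal{H}^t(K_{\mathbf{i}})$, yielding the geometric decay $(1-r_{\min}^t)^n$ directly. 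Replacing your ball-based step by this cylinder-based recursion fixes your argument. (As an aside, $\mathcal{H}^t(K)<\infty$ always holds for a self-similar set by the implicit theorems, so your separate treatment of the case $\mathcal{H}^t(K)=\infty$ is unnecessary.)
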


\begin{rem}
\label{Rem: Mainlem EQ}Under the assumptions of Proposition \ref{lem:Egyenlito disjoint lem T=00003Dinfty}
if $K$ is a $t$-set it follows that $\mathcal{H}^{t}(K)=\sum_{\boldsymbol{\mathbf{i}}\in\mathcal{I}_{\infty}}\mathcal{H}^{t}(K_{\boldsymbol{\mathbf{i}}})=\sum_{\boldsymbol{\mathbf{i}}\in\mathcal{I}_{\infty}}r_{\boldsymbol{\mathbf{i}}}^{t}\cdot\mathcal{H}^{t}(K)$,
hence
\[
\sum_{\boldsymbol{\mathbf{i}}\in\mathcal{I}_{\infty}}r_{\boldsymbol{\mathbf{i}}}^{t}=1.
\]
This equation plays the role of (\ref{eq: S-dim sum}) in the non-OSC
case.
\end{rem}
Another advantage of Proposition \ref{lem:Egyenlito disjoint lem T=00003Dinfty}
is that we can regard the IFS as one for which the orthogonal part
$T_{\boldsymbol{\mathbf{i}}}$ of the maps are approximately the same
at any level. This observation helps us to deal with the higher dimensional
cases when the rotations do not necessarily commute.

The Hausdorff content of a set $H\subseteq\mathbb{R}^{d}$ is $\mathcal{H}_{\infty}^{t}(H)=\inf\left\{ \sum_{i=1}^{\infty}\mathrm{diam}(U_{i})^{t}:H\subseteq\bigcup_{i=1}^{\infty}U_{i}\right\} $.
The next proposition says that the Hausdorff measure and content of
linear images of $K$ coincide. It follows that the Hausdorff measure
is upper semi-continuous since the Hausdorff content is upper semi-continuous.
This observation is essential in the proofs of Theorem \ref{thm: Egynlo lin Im}
and Theorem \ref{cor:Dense group lin image 0}.

\begin{prop}
\label{lem:Lin neighbourhood lem}Let $\left\{ S_{i}\right\} _{i=1}^{m}$
be an SS-IFS with attractor $K\subseteq\mathbb{R}^{d}$, let $t=\dim_{H}(K)$
and $L:\mathbb{R}^{d}\longrightarrow\mathbb{R}^{d_{2}}$ be a linear
map. Then

(i) \textup{$\mathcal{H}^{t}(L(K))=\mathcal{H}_{\infty}^{t}(L(K))$,}

(ii) for every $\varepsilon>0$ there exists $\delta>0$ such that
for every linear map $L_{2}:\mathbb{R}^{d}\longrightarrow\mathbb{R}^{d_{2}}$
with $\left\Vert L-L_{2}\right\Vert <\delta$ we have that $\mathcal{H}^{t}(L_{2}(K))\leq\mathcal{H}^{t}(L(K))+\varepsilon$.
\end{prop}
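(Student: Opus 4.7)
The plan is to derive (ii) from (i) via the upper semi-continuity of the Hausdorff content, and spend most of the effort on (i).

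For (ii) assuming (i): compactness of $K$ yields $\mathrm{dist}_H(L(K),L_2(K))\le\|L-L_2\|\cdot\sup_{x\in K}\|x\|$, so $L_2(K)\to L(K)$ in Hausdorff metric as $\|L-L_2\|\to 0$. Given $\varepsilon>0$, take a finite open cover $\{U_j\}_{j=1}^N$ of $L(K)$ with $\sum|U_j|^t<\mathcal{H}_\infty^t(L(K))+\varepsilon$. A Lebesgue-number argument shows that $L_2(K)\subseteq\bigcup_j U_j$ once $\|L-L_2\|$ is small enough, giving $\mathcal{H}_\infty^t(L_2(K))\le\mathcal{H}_\infty^t(L(K))+\varepsilon$. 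Applying (i) to both $L$ and $L_2$ converts content to measure on each side and delivers the upper semi-continuity asserted in (ii).

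For (i), the inequality $\mathcal{H}_\infty^t(L(K))\le\mathcal{H}^t(L(K))$ is immediate from definitions. For the reverse, the edge cases $\mathcal{H}^t(K)=0$ (where $\mathcal{H}^t(L(K))=0$ since $L$ is Lipschitz) and $\mathcal{H}_\infty^t(L(K))=\infty$ are handled at once, so assume $K$ is a $t$-set and $\mathcal{H}_\infty^t(L(K))<\infty$. Fix $\varepsilon,\delta>0$ and take a finite open cover $\{U_j\}_{j=1}^N$ of $L(K)$ with $\sum|U_j|^t<\mathcal{H}_\infty^t(L(K))+\varepsilon$. I want to refine this into a $\delta$-cover of $L(K)$ whose $t$-sum is at most $\mathcal{H}_\infty^t(L(K))+O(\varepsilon)$; letting $\delta\to 0$ and then $\varepsilon\to 0$ then gives $\mathcal{H}^t(L(K))\le\mathcal{H}_\infty^t(L(K))$.

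The refinement is driven by Proposition \ref{lem:Egyenlito disjoint lem T=00003Dinfty} applied with $O=\mathrm{Id}$, iterated if necessary. It produces a disjoint family $\{K_{\mathbf{i}}\}_{\mathbf{i}\in\mathcal{I}_\infty}$ of cylinders covering $K$ up to an $\mathcal{H}^t$-null set, with $T_{\mathbf{i}}$ arbitrarily close to the identity and, by Remark \ref{Rem: Mainlem EQ}, weights satisfying $\sum_{\mathbf{i}\in\mathcal{I}_\infty}r_{\mathbf{i}}^t=1$. Iteration (concatenating such families) preserves both the scaling identity and the near-identity property while forcing every $|L(K_{\mathbf{i}})|\le r_{\mathbf{i}}\|L\||K|<\delta$, so each $L(K_{\mathbf{i}})$ fits inside some $U_{j(\mathbf{i})}$ by a Lebesgue-number argument on $\{U_j\}$. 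The main obstacle is then to bound $\sum_{\mathbf{i}}|L(K_{\mathbf{i}})|^t$ by $\sum_j |U_j|^t$ up to a small error: here the disjointness of the $K_{\mathbf{i}}$ in $K$ prevents double counting of the $t$-mass, the near-identity rotations ensure $|LT_{\mathbf{i}}(K)|$ is essentially constant in $\mathbf{i}$, and the identity $\sum r_{\mathbf{i}}^t=1$ pins down how the weights are distributed. Pushing this bookkeeping through is the heart of the proof, and once such a $\delta$-cover is in hand one lets $\delta\to 0$ and $\varepsilon\to 0$ to conclude.
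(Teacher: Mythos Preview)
Your reduction of (ii) to (i) is fine, and the overall strategy for (i)---refine an efficient cover of $L(K)$ into an $\eta$-cover using Proposition~\ref{lem:Egyenlito disjoint lem T=00003Dinfty} with $O=\mathrm{Id}$ and the identity $\sum r_{\mathbf{i}}^t=1$---matches the paper's. But there is a genuine gap in the refinement step.

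You propose $\{L(K_{\mathbf{i}})\}_{\mathbf{i}\in\mathcal{I}_\infty}$ as the $\eta$-cover and want $\sum_{\mathbf{i}}|L(K_{\mathbf{i}})|^t\lesssim\sum_j|U_j|^t$. But the near-identity condition gives $|L(K_{\mathbf{i}})|=r_{\mathbf{i}}|LT_{\mathbf{i}}(K)|\approx r_{\mathbf{i}}\,\mathrm{diam}(L(K))$, so $\sum_{\mathbf{i}}|L(K_{\mathbf{i}})|^t\approx\mathrm{diam}(L(K))^t\sum_{\mathbf{i}}r_{\mathbf{i}}^t=\mathrm{diam}(L(K))^t$, recovering only the trivial one-set bound, not $\mathcal{H}_\infty^t(L(K))$. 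The Lebesgue-number observation that each $L(K_{\mathbf{i}})$ lands in some $U_{j(\mathbf{i})}$ does not help: bounding $|L(K_{\mathbf{i}})|\le|U_{j(\mathbf{i})}|$ gives $\sum_{\mathbf{i}}|U_{j(\mathbf{i})}|^t$ with uncontrolled multiplicities, and the disjointness of the $K_{\mathbf{i}}$ in $K$ only constrains $\sum r_{\mathbf{i}}^t$, not how many cylinders fall into a given $U_j$. None of the three ingredients you list closes this gap.

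The missing idea, which is what the paper does, is to scale down the \emph{pullback} of the cover rather than the cylinders. Set $V_j=L^{-1}(U_j)\cap B(0,R)$; these cover $K$. Then $\{S_{\mathbf{i}}(V_j):\mathbf{i}\in\mathcal{I}_\infty,\ j\}$ covers $\bigcup_{\mathbf{i}}K_{\mathbf{i}}$, and the push-forward $\{L\circ S_{\mathbf{i}}(V_j)\}$ is your $\eta$-cover of $L(K)$ (up to an $\mathcal{H}^t$-null set). Now $|L\circ S_{\mathbf{i}}(V_j)|=r_{\mathbf{i}}|L\circ T_{\mathbf{i}}(V_j)|\le r_{\mathbf{i}}(1+\varepsilon)|U_j|$ by the near-identity of $T_{\mathbf{i}}$, and the double sum factors as $(1+\varepsilon)^t\bigl(\sum_{\mathbf{i}}r_{\mathbf{i}}^t\bigr)\bigl(\sum_j|U_j|^t\bigr)=(1+\varepsilon)^t\sum_j|U_j|^t$. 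This is exactly where the identity $\sum r_{\mathbf{i}}^t=1$ is used, and it is the step your sketch is missing.
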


In particular Proposition \ref{lem:Lin neighbourhood lem} implies
that $\mathcal{H}^{t}(\Pi_{M}(K))$ is upper semi-continuous in $M\in G_{d,l}$,
in contrast with a result of Hochman and Shmerkin \cite[Theorem 1.8]{Hochman-Schmerkin-local entropy}
on the lower semi-continuity of the lower Hausdorff dimension of Bernoulli
convolutions.

It is well-know that if for a set $H$ with $\mathcal{H}^{t}(H)<\infty$
we have that $\mathcal{H}^{t}(H)=\mathcal{H}_{\infty}^{t}(H)$ then
for every $\mathcal{H}^{t}$-measurable subset $A\subseteq H$
\[
\mathcal{H}^{t}(H)=\mathcal{H}^{t}(A)+\mathcal{H}^{t}(H\setminus A)\geq\mathcal{H}_{\infty}^{t}(A)+\mathcal{H}_{\infty}^{t}(H\setminus A)\geq\mathcal{H}_{\infty}^{t}(H)=\mathcal{H}^{t}(H)
\]
so $\mathcal{H}^{t}(A)=\mathcal{H}_{\infty}^{t}(A)$.

It follows that $\mathcal{H}^{t}(B)=\mathcal{H}_{\infty}^{t}(B)$
for every $\mathcal{H}^{t}$-measurable subset $B\subseteq L(K)$.
Let $A\subseteq L(K)$ be arbitrary and $B\subseteq L(K)$ be a $\mathcal{H}^{t}$-measurable
hull of $A$ (for the definition of $\mathcal{H}^{t}$-measurable
hull see Section \ref{sub:Linear-images}). We can further assume
that $\mathrm{diam}(B)=\mathrm{diam}(A)$. Then $\mathcal{H}^{t}(A)=\mathcal{H}^{t}(B)=\mathcal{H}_{\infty}^{t}(B)\leq\mathrm{diam}(A)^{t}$.
In particular for $L=Id_{\mathbb{R}^{d}}$ we obtain that $\mathcal{H}^{t}(A)\leq\mathrm{diam}(A)^{t}$
for every subset $A\subseteq K$. Thus Proposition \ref{prop:.diam^s}
remains valid with $s$ replaced by $t$.

Proposition \ref{lem:Lin neighbourhood lem} does not generalise to
smooth maps. If $K$ is a $1$-set and $L:\mathbb{R}^{d}\longrightarrow\mathbb{R}$
is such that $L(K)=[0,1]$ (see Example \ref{Ex: poz proj sierp}
for $t=1$) then $g(x):=\left(\cos(L(x),\sin(L(x)\right):\mathbb{R}^{d}\longrightarrow\mathbb{R}^{2}$
is such that $\mathcal{H}^{t}(g(K))\neq\mathcal{H}_{\infty}^{t}(g(K))$.

\section{Preliminaries\label{sec:Preliminaries}}

In this section we summarize the background and preliminary results
needed to prove our main results.

Let $H\subset\mathbb{R}^{d}$ and write $\dim_{H}(H)$, $\underline{\dim}_{B}(H)$,
$\overline{\dim}_{B}(H)$ and $\dim_{P}(H)$ for the Hausdorff dimension,
lower box dimension, upper box dimension and packing dimension of
$H$ respectively (for the definitions see for example \cite{Falconer1}).
We denote the diameter of $H$ by $\mathrm{diam}(H)$. We recall that
$\mathcal{H}^{s}(H)$ denotes the $s$-dimensional Hausdorff measure
of $H$ and $\left|A\right|$ denotes the cardinality of a set $A$.
If $x\in\mathbb{R}^{d}$ and $r>0$ then we denote the open ball that
is centered at $x$ with radius $r$ by $B(x,r)$. We denote the identity
map on $\mathbb{R}^{d}$ by $Id_{\mathbb{R}^{d}}$.

\subsection{Orthogonal transformations}

Let $L:\mathbb{R}^{d}\longrightarrow\mathbb{R}^{d_{2}}$ be a linear
map. We recall that the Euclidean operator norm of $L$ is defined
as
\[
\left\Vert L\right\Vert =\sup_{x\in\mathbb{R}^{d},\left\Vert x\right\Vert =1}\left\Vert Lx\right\Vert 
\]
where $\left\Vert y\right\Vert $ denotes the Euclidean norm of a
vector $y$. A linear map $T:\mathbb{R}^{d}\longrightarrow\mathbb{R}^{d}$
is called \textit{orthogonal} if it preserves the norm, i.e. $\left\Vert T(y)\right\Vert =\left\Vert y\right\Vert $
for all $y\in\mathbb{R}^{d}$, hence $\left\Vert T\right\Vert =1$.
If $L$ is linear and $T$ is orthogonal then it follows that $\left\Vert L\right\Vert =\left\Vert L\circ T\right\Vert $.
Similarly if $T:\mathbb{R}^{d_{2}}\longrightarrow\mathbb{R}^{d_{2}}$
is an orthogonal transformation and $L$ is linear as above then $\left\Vert L\right\Vert =\left\Vert T\circ L\right\Vert $.
We recall that $\mathbb{O}_{d}$ denotes the set of all orthogonal
transformations. With the Euclidean operator norm metric $\mathbb{O}_{d}$
is a compact topological group.

\begin{lem}
\label{lem:lem_i2}If $T_{1},\ldots,T_{m}\in\mathbb{O}_{d}$ then
the semigroup generated by $T_{1},\ldots,T_{m}$ is dense in the group
generated by $T_{1},\ldots,T_{m}$.
\end{lem}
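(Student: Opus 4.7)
The plan is to show that each inverse generator $T_i^{-1}$ can be approximated arbitrarily well by a positive power $T_i^{n-1}$, after which substituting such approximants into a word representation of an arbitrary group element yields a semigroup element close to it.

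First I would establish the one-generator version: for any $T \in \mathbb{O}_d$ and any $\delta>0$, there exists $n \geq 2$ with $\|T^n - Id_{\mathbb{R}^d}\| < \delta$. This is the pigeonhole argument in the compact space $\mathbb{O}_d$: the infinite sequence $T, T^2, T^3, \ldots$ has two members $T^p, T^q$ with $p<q$ and $\|T^q - T^p\| < \delta$; since $T^p$ is an isometry of the operator norm on linear maps (left multiplication by an orthogonal transformation preserves $\|\cdot\|$), this rearranges to $\|T^{q-p} - Id_{\mathbb{R}^d}\| < \delta$. We may ensure $q-p \geq 2$ because once we have one such pair, all large enough differences of indices whose images lie in a common $\delta$-neighbourhood also work. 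Setting $n = q - p$, the same isometry property gives
\[
\|T^{n-1} - T^{-1}\| = \|T^{-1}(T^n - Id_{\mathbb{R}^d})\| = \|T^n - Id_{\mathbb{R}^d}\| < \delta,
\]
so $T^{-1}$ is approximated within $\delta$ by the positive power $T^{n-1}$ with $n - 1 \geq 1$.

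Next I would use the basic word description of the group: any $g$ in the group generated by $T_1, \ldots, T_m$ can be written as $g = U_1 U_2 \cdots U_p$ with each $U_j \in \{T_1, \ldots, T_m, T_1^{-1}, \ldots, T_m^{-1}\}$. Given $\varepsilon > 0$, apply the first step to produce, for each $j$ with $U_j = T_{i_j}^{-1}$, a positive power $V_j = T_{i_j}^{n_j - 1}$ with $\|V_j - U_j\| < \varepsilon / p$; for $j$ with $U_j$ already a generator, set $V_j = U_j$. Then $V_1 V_2 \cdots V_p$ is a nonempty product of the $T_i$ and therefore lies in the semigroup generated by $T_1, \ldots, T_m$.

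Finally, I would estimate the total error using that each $U_j$ and each $V_j$ have operator norm $1$ (as products of orthogonal maps). A standard telescoping bound gives
\[
\|U_1 \cdots U_p - V_1 \cdots V_p\| \leq \sum_{j=1}^{p} \|U_j - V_j\| < \varepsilon,
\]
proving the claim. The only delicate point is the pigeonhole step, and it is routine once one notes that left (or right) multiplication by an orthogonal map is an isometry for the operator norm; no technical obstacle beyond this is expected.
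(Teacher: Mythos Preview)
Your argument is correct and is precisely the standard way to unpack the paper's one-line justification that the lemma ``is a consequence of the compactness of $\mathbb{O}_d$'': compactness gives recurrence of powers near the identity, hence positive-power approximants of each $T_i^{-1}$, and the telescoping estimate (using $\|U_j\|=\|V_j\|=1$) finishes it. The only slightly informal spot is the remark ensuring $q-p\geq 2$; a clean way to secure this is to extract a convergent subsequence of $(T^k)_{k\geq 1}$ and pick two of its terms with indices differing by at least $2$, but this is a cosmetic point and not a gap.
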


Lemma \ref{lem:lem_i2} is a consequence of the compactness of $\mathbb{O}_{d}$.

\begin{prop}
\label{lem:Jordanos lem}If $T\in\mathbb{O}_{d}$ then for all $N\in\mathbb{N}$
there exists $k\in\mathbb{N}$, $k\geq N$, such that the group generated
by $T^{k}$ is dense in the group generated by $T$.
\end{prop}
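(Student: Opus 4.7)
The plan is to analyze the closed subgroup $G=\overline{\langle T\rangle}$ of $\mathbb{O}_d$. As a closed subgroup of a compact Lie group it is itself a compact Lie group, and since $\langle T\rangle$ is abelian so is $G$. The structure theory of compact abelian Lie groups then tells me that the identity component $G_0$ is a torus and the discrete quotient $G/G_0$ is a finite abelian group. Because $\langle T\rangle$ is dense in $G$, the image of $T$ generates $G/G_0$, so in fact $G/G_0$ is cyclic of some order $m$.

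The problem reduces to proving $\overline{\langle T^k\rangle}=G$ whenever $\gcd(k,m)=1$. Passing to $G/G_0$, the image of $T^k$ is the $k$-fold sum of a generator of $\mathbb{Z}/m\mathbb{Z}$, which is again a generator if and only if $\gcd(k,m)=1$; this condition is therefore necessary, and it also guarantees that $\overline{\langle T^k\rangle}$ projects onto all of $G/G_0$. To upgrade this to $G_0\subseteq\overline{\langle T^k\rangle}$, I would first observe that $T^m$ is a topological generator of $G_0$: any $g\in G_0$ is a limit of powers $T^n$, and for sufficiently fine approximation such powers must lie in the same coset of $G_0$ as $g$, forcing $n\equiv 0\pmod m$; hence $\{T^{nm}:n\in\mathbb{Z}\}$ is dense in $G_0$. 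It then suffices to show that $T^{mk}=(T^m)^k$ still topologically generates $G_0$.

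The decisive step is the following Kronecker-type fact: in any torus $\mathbb{T}^a$, if $\beta$ topologically generates $\mathbb{T}^a$ then so does $k\beta$ for every nonzero integer $k$. The cleanest argument is that multiplication-by-$k$ is a continuous surjective endomorphism of $\mathbb{T}^a$, and for a continuous map with compact domain the image of a closure equals the closure of the image; hence $\overline{\langle k\beta\rangle}=k\cdot\overline{\langle\beta\rangle}=k\mathbb{T}^a=\mathbb{T}^a$. Applying this inside $G_0$ with $\beta=T^m$ yields $G_0\subseteq\overline{\langle T^{mk}\rangle}\subseteq\overline{\langle T^k\rangle}$, and combined with the surjectivity onto $G/G_0$ this gives $\overline{\langle T^k\rangle}=G$.

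Finally, for any given $N$ one picks $k\geq N$ with $\gcd(k,m)=1$, for example any prime $k\geq\max\{N,m+1\}$. The conceptual obstacle is setting up the correct dichotomy between the torus part $G_0$ and the finite cyclic part $G/G_0$; once that splitting is in place, the argument reduces to the elementary observation that only the finite cyclic quotient imposes a divisibility constraint on $k$, while the torus direction tolerates every nonzero $k$ by virtue of Kronecker.
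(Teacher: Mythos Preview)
Your proof is correct and is essentially the structure-theoretic argument that the paper itself mentions just before its proof (``can be deduced from the fact that every compact abelian Lie group is isomorphic to the product of finitely many circle groups and a finite abelian group'') but then deliberately sets aside in favour of an elementary argument. The paper instead block-diagonalizes $T$ into $2\times 2$ rotation blocks and $\pm 1$ entries, separates the blocks of finite order from those with irrational angle, builds $k$ as one plus a multiple of the product of the finite orders, and then invokes Kronecker's simultaneous approximation theorem directly on the irrational angles to show that powers of $T^k$ approximate $T$.

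The trade-off is clear. Your route is much cleaner conceptually: once you know $G_0$ is a torus and $G/G_0$ is finite cyclic of order $m$, everything reduces to the single observation that multiplication-by-$k$ is surjective on a torus, and the only constraint is $\gcd(k,m)=1$. The paper's route is longer and more computational but is self-contained, requiring no Lie theory beyond the real Jordan form for orthogonal matrices and Kronecker's theorem; this matches the paper's stated preference for an elementary proof. Your surjective-endomorphism argument for the torus part is in fact a slick repackaging of exactly what Kronecker's theorem provides in the paper's hands.
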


\noindent Proposition \ref{lem:Jordanos lem} can be deduced from
the fact that every compact abelian Lie group is isomorphic to the
product of finitely many circle groups and a finite abelian group.
However, we provide an elementary proof relying on Kronecker's simultaneous
approximation theorem.

\begin{proof}
By \cite[Theorem 10.12]{Blyth-Robertson Further Linear Algebra} we
can find an orthonormal basis in $\mathbb{R}^{d}$ with respect to
which the matrix form of $T$ is block diagonal such that the blocks
are either $\begin{bmatrix}1\end{bmatrix}$ or $\begin{bmatrix}-1\end{bmatrix}$
or $B(\alpha_{i})=\begin{bmatrix}\cos(\alpha_{i}) & -\sin(\alpha_{i})\\
\sin(\alpha_{i}) & \cos(\alpha_{i})
\end{bmatrix}$ for some $\alpha_{1},\ldots,\alpha_{n}\in[0,2\pi)$. Let $\mathcal{J}=\left\{ i\in\left\{ 1,\ldots,n\right\} :B(\alpha_{i})\:\mathrm{has\:finite\:order}\right\} $.
If $i\in\mathcal{J}$ then let $k_{i}$ be the order of $B(\alpha_{i})$.
Let $k_{0}=2^{N}\cdot\prod_{i\in\mathcal{J}}k_{i}>N$. Then for any
$l\in\mathbb{N}$ it follows that $B(\alpha_{i})^{l\cdot k_{0}+1}=B(\alpha_{i})$
for all $i\in\mathcal{J}$, $\begin{bmatrix}1\end{bmatrix}^{l\cdot k_{0}+1}=\begin{bmatrix}1\end{bmatrix}$
and $\begin{bmatrix}-1\end{bmatrix}^{l\cdot k_{0}+1}=\begin{bmatrix}-1\end{bmatrix}$.

Let $\mathcal{A}=\left\{ \frac{\alpha_{i}}{2\pi}:i\in\left\{ 1,\ldots,n\right\} \setminus\mathcal{J}\right\} $.
If $\frac{\alpha_{i}}{2\pi}\in\mathcal{A}$ then $\frac{\alpha_{i}}{2\pi}$
is irrational. Let $1,\frac{\beta_{1}}{2\pi},\ldots,\frac{\beta_{m}}{2\pi}\in\mathcal{A}\bigcup\left\{ 1\right\} $
be a maximal linearly independent system over $\mathbb{Q}$. Then
we can write every $\alpha_{i}$ for $i\in\left\{ 1,\ldots,n\right\} \setminus\mathcal{J}$
in the form $\alpha_{i}=\left(\frac{p_{i,0}}{q_{i,0}}\cdot2\pi+\sum_{j=1}^{m}\frac{p_{i,j}}{q_{i,j}}\cdot\beta_{j}\right)$
such that $p_{i,j}\in\mathbb{Z}$, $q_{i,j}\in\mathbb{N}$. Let $M=\max\left\{ \left|\frac{p_{i,j}}{q_{i,j}}\right|:i\in\left\{ 1,\ldots,n\right\} \setminus\mathcal{J},j\in\left\{ 0,\ldots,m\right\} \right\} $,
let $\widehat{q}=\prod_{i\in\left\{ 1,\ldots,n\right\} \setminus\mathcal{J}}\prod_{j=1}^{m}q_{i,j}$,
let $q=\prod_{i\in\left\{ 1,\ldots,n\right\} \setminus\mathcal{J}}q_{i,0}$
and let $k=k_{0}\cdot q+1$. Let $\delta>0$ be arbitrary. Then $1,(k-1)\cdot k\cdot\frac{\beta_{1}}{2\pi},\ldots,(k-1)\cdot k\cdot\frac{\beta_{m}}{2\pi}$
is a linearly independent system over $\mathbb{Q}$, hence by Kronecker's
simultaneous approximation theorem \cite[Theorem 442]{Hardy-Wright-An introduction to the theory of numbers}
we can find $p\in\mathbb{N}$, $d_{1},\ldots,d_{m}\in\mathbb{Z}$
such that $\left|p\cdot(k-1)\cdot k\cdot\frac{\beta_{j}}{2\pi}-(1-k)\cdot\frac{\beta_{j}}{2\pi}-d_{j}\right|<\frac{\delta}{M\cdot m\cdot2\pi}$
for $j=1,\ldots,m$ and we can further assume that $p,d_{1},\ldots,d_{m}$
are multiples of $\widehat{q}$. It follows that $\left|\left(p\cdot(k-1)+1\right)\cdot k\cdot\beta_{j}-\beta_{j}-d_{j}\cdot2\pi\right|<\frac{\delta}{M\cdot m}$.
By the choice of $q$ and $k$ the numbers defined by $D_{i}=\left(p\cdot(k-1)+1\right)\cdot k\cdot\frac{p_{i,0}}{q_{i,0}}-\frac{p_{i,0}}{q_{i,0}}$
are integers for all $i\in\left\{ 1,\ldots,n\right\} \setminus\mathcal{J}$.
Thus
\[
\left|\left(p\cdot\left(k-1\right)+1\right)\cdot k\cdot\alpha_{i}-\alpha_{i}-D_{i}\cdot2\pi-\sum_{j=1}^{m}\frac{p_{i,j}}{q_{i,j}}\cdot d_{j}\cdot2\pi\right|
\]
\[
=\left|\sum_{j=1}^{m}\frac{p_{i,j}}{q_{i,j}}\left(\left(p\cdot\left(k-1\right)+1\right)\cdot k\cdot\beta_{j}-\beta_{j}\right)-\frac{p_{i,j}}{q_{i,j}}\cdot d_{j}\cdot2\pi\right|
\]
\[
\leq\sum_{j=1}^{m}\left|\frac{p_{i,j}}{q_{i,j}}\right|\cdot\left|\left(p\cdot\left(k-1\right)+1\right)\cdot k\cdot\beta_{j}-\beta_{j}-d_{j}\cdot2\pi\right|<\delta
\]
for $i\in\left\{ 1,\ldots,n\right\} \setminus\mathcal{J}$ and by
the choice of $\widehat{q}$ we have that $\sum_{j=1}^{m}\frac{p_{i,j}}{q_{i,j}}\cdot d_{j}\in\mathbb{Z}$.
So if we set $z=\left(p\cdot(k-1)+1\right)$ then $\begin{bmatrix}1\end{bmatrix}^{k\cdot z}=\begin{bmatrix}1\end{bmatrix}$,
$\begin{bmatrix}-1\end{bmatrix}^{k\cdot z}=\begin{bmatrix}-1\end{bmatrix}$,
$B(\alpha_{i})^{k\cdot z}=B(\alpha_{i})$ for all $i\in\mathcal{J}$
and $B(\alpha_{i})^{k\cdot z}=B(\gamma_{i})$ for all $i\in\left\{ 1,\ldots,n\right\} \setminus\mathcal{J}$
for some $\gamma_{i}\in(0,2\pi)$ such that $\left|\gamma_{i}-\alpha_{i}\right|<\delta$.

So we can approximate $T$ by the powers of $T^{k}$, hence we can
approximate the powers of $T$ by the powers of $T^{k}$. Thus the
group generated by $T^{k}$ is dense in the group generated by $T$.
\end{proof}

\subsection{\label{sub:Linear-images}Linear images}

Every linear map $L:\mathbb{R}^{d}\longrightarrow\mathbb{R}^{d_{2}}$
is a Lipschitz map with Lipschitz constant $\left\Vert L\right\Vert $.

Giving the following well-known lemma, which we use without reference
throughout the paper.
\begin{lem}
\label{lem:H^s(vet)<H^s}Let $H\subseteq\mathbb{R}^{d}$ and $L:\mathbb{R}^{d}\longrightarrow\mathbb{R}^{d_{2}}$
be a linear map. Then $\mathcal{H}^{s}\left(L(H)\right)\leq\left\Vert L\right\Vert ^{s}\cdot\mathcal{H}^{s}\left(H\right)$
and $\dim_{H}(L(H))\leq\dim_{H}(H)$.
\end{lem}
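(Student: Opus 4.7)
The plan is to leverage the fact, recorded just above the lemma, that every linear map $L:\mathbb{R}^{d}\to\mathbb{R}^{d_{2}}$ is Lipschitz with constant $\|L\|$, and then to apply the standard Lipschitz scaling behaviour of Hausdorff measure.

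First I would fix $s\geq 0$ and $\delta>0$ and take an arbitrary cover $\{U_{i}\}_{i=1}^{\infty}$ of $H$ with $\mathrm{diam}(U_{i})\leq\delta$. Since $L$ is $\|L\|$-Lipschitz, the family $\{L(U_{i})\}$ covers $L(H)$ and satisfies
\[
\mathrm{diam}\bigl(L(U_{i})\bigr)\leq\|L\|\cdot\mathrm{diam}(U_{i})\leq\|L\|\,\delta.
\]
Hence $\{L(U_{i})\}$ is an admissible cover for the premeasure $\mathcal{H}^{s}_{\|L\|\delta}$, and
\[
\mathcal{H}^{s}_{\|L\|\delta}\bigl(L(H)\bigr)\leq\sum_{i=1}^{\infty}\mathrm{diam}\bigl(L(U_{i})\bigr)^{s}\leq\|L\|^{s}\sum_{i=1}^{\infty}\mathrm{diam}(U_{i})^{s}.
\]
Taking the infimum over all $\delta$-covers of $H$ gives $\mathcal{H}^{s}_{\|L\|\delta}(L(H))\leq\|L\|^{s}\mathcal{H}^{s}_{\delta}(H)$, and letting $\delta\to 0$ yields the first inequality $\mathcal{H}^{s}(L(H))\leq\|L\|^{s}\mathcal{H}^{s}(H)$.

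For the dimension statement I would note that if $s>\dim_{H}(H)$ then $\mathcal{H}^{s}(H)=0$, so the inequality just established forces $\mathcal{H}^{s}(L(H))=0$, and hence $\dim_{H}(L(H))\leq s$. Taking the infimum over such $s$ gives $\dim_{H}(L(H))\leq\dim_{H}(H)$. The argument is entirely routine; the only minor subtlety is to allow the degenerate case $\|L\|=0$ (where $L(H)$ is a single point and both sides of the measure inequality are interpreted with the convention $0^{0}=1$ when $s=0$, but then $\mathcal{H}^{0}(L(H))\leq 1\leq\mathcal{H}^{0}(H)$ unless $H=\emptyset$, in which case both are zero), so no genuine obstacle arises.
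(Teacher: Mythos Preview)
Your argument is correct and is exactly the standard Lipschitz scaling proof for Hausdorff measure. The paper itself does not supply a proof of this lemma at all---it simply introduces it as ``the following well-known lemma, which we use without reference throughout the paper''---so there is nothing further to compare.
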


A set $A\subseteq\mathbb{R}^{d}$ is called \textit{$\mathcal{H}^{s}$-measurable}
if $\mathcal{H}^{s}(H)=\mathcal{H}^{s}(H\setminus A)+\mathcal{H}^{s}(H\cap A)$
for any set $H\subseteq\mathbb{R}^{d}$. For an arbitrary set $A$
a $\mathcal{H}^{s}$-measurable set $B$ such that $A\subseteq B$
and $\mathcal{H}^{s}(A)=\mathcal{H}^{s}(B)$ is called a \textit{$\mathcal{H}^{s}$-measurable
hull of $A$}.

\begin{lem}
\label{lem: hull exist}For every set $A\subseteq\mathbb{R}^{d}$
there exists a Borel set $B$ that is a $\mathcal{H}^{s}$-measurable
hull of $A$.
\end{lem}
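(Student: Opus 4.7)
The plan is to exhibit the hull $B$ explicitly as a countable intersection of countable unions of closed sets, exploiting the definition of $\mathcal{H}^{s}$ via $\delta$-fine covers together with the elementary fact that passing to closures does not change diameters.

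First I would dispose of the trivial case $\mathcal{H}^{s}(A)=\infty$ by setting $B=\mathbb{R}^{d}$, which is Borel, contains $A$, and has the same (infinite) measure. Assume now $\mathcal{H}^{s}(A)<\infty$. Recalling that $\mathcal{H}^{s}(A)=\lim_{\delta\to 0}\mathcal{H}^{s}_{\delta}(A)$, for each $n\in\mathbb{N}$ choose a countable cover $\{U_{i}^{(n)}\}_{i=1}^{\infty}$ of $A$ with $\mathrm{diam}(U_{i}^{(n)})<1/n$ and
\[
\sum_{i=1}^{\infty}\mathrm{diam}\bigl(U_{i}^{(n)}\bigr)^{s}<\mathcal{H}^{s}_{1/n}(A)+\tfrac{1}{n}.
\]
Replace each $U_{i}^{(n)}$ by its closure $\overline{U_{i}^{(n)}}$; this preserves both the covering property and the diameter. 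Set $B_{n}=\bigcup_{i=1}^{\infty}\overline{U_{i}^{(n)}}$, which is an $F_{\sigma}$ set, and define $B=\bigcap_{n=1}^{\infty}B_{n}$, which is Borel.

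By construction $A\subseteq B$, so $\mathcal{H}^{s}(A)\leq \mathcal{H}^{s}(B)$. For the reverse, observe that $\{\overline{U_{i}^{(n)}}\}_{i}$ is also a $1/n$-cover of $B$ (since $B\subseteq B_{n}$), hence
\[
\mathcal{H}^{s}_{1/n}(B)\leq \sum_{i=1}^{\infty}\mathrm{diam}\bigl(\overline{U_{i}^{(n)}}\bigr)^{s}<\mathcal{H}^{s}_{1/n}(A)+\tfrac{1}{n}\leq \mathcal{H}^{s}(A)+\tfrac{1}{n}.
\]
Letting $n\to\infty$ yields $\mathcal{H}^{s}(B)\leq \mathcal{H}^{s}(A)$. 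Combined with the reverse inequality this gives $\mathcal{H}^{s}(B)=\mathcal{H}^{s}(A)$, so $B$ is a $\mathcal{H}^{s}$-measurable hull (Borel sets are $\mathcal{H}^{s}$-measurable since $\mathcal{H}^{s}$ is a metric outer measure).

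There is essentially no obstacle here; the only subtle point worth flagging is the closure trick — it is what upgrades an abstract infimum-type construction to one producing a genuine Borel set without enlarging the measure, and it is the reason we get a Borel hull rather than just a $\mathcal{H}^{s}$-measurable one.
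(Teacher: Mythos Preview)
Your argument is correct and is essentially the standard construction underlying the reference the paper cites (Mattila, Theorem~4.4): choose near-optimal $\delta$-covers, pass to closures so the covering sets become Borel without changing diameters, and intersect over a sequence $\delta\to 0$. The paper does not spell this out but simply invokes Mattila to obtain a $G_{\delta}$ hull, whereas your explicit version produces an $F_{\sigma\delta}$ hull; both are Borel, so either suffices for the lemma as stated, and your write-up has the advantage of being self-contained.
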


From \cite[Theorem 4.4]{Mattilakonyv} it can be deduced that there
exists a $\mathcal{H}^{s}$-measurable hull of $A$, that is a $G_{\delta}$
set. So there exists a Borel set $B$ that is a $\mathcal{H}^{s}$-measurable
hull of $A$.

\begin{lem}
\label{lem:Hull Lemma}For a set $A\subseteq\mathbb{R}^{d}$ such
that $\mathcal{H}^{s}(A)<\infty$, let $B$ be a $\mathcal{H}^{s}$-measurable
hull of $A$ and $L:\mathbb{R}^{d}\longrightarrow\mathbb{R}^{d_{2}}$
be a linear map. Then $\mathcal{H}^{s}\left(L(A)\right)=\mathcal{H}^{s}\left(L(B)\right)$.\end{lem}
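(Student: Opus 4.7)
The plan is to use the standard argument that a measurable hull of a finite-measure set differs from it by a set of Hausdorff measure zero, and then transport this null set through $L$ via the Lipschitz estimate in Lemma \ref{lem:H^s(vet)<H^s}.

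First I would establish the easy direction: since $A\subseteq B$ we have $L(A)\subseteq L(B)$, so monotonicity of the Hausdorff outer measure gives $\mathcal{H}^{s}(L(A))\leq \mathcal{H}^{s}(L(B))$.

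Next I would show that $\mathcal{H}^{s}(B\setminus A)=0$. Using the $\mathcal{H}^{s}$-measurability of $B$ applied to the test set $H=B$ together with $A\subseteq B$, we get
\[
\mathcal{H}^{s}(B)=\mathcal{H}^{s}(B\cap A)+\mathcal{H}^{s}(B\setminus A)=\mathcal{H}^{s}(A)+\mathcal{H}^{s}(B\setminus A).
\]
Since $\mathcal{H}^{s}(A)=\mathcal{H}^{s}(B)<\infty$ by the hull property and the hypothesis, we can subtract to conclude $\mathcal{H}^{s}(B\setminus A)=0$.

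Now the decomposition $L(B)=L(A)\cup L(B\setminus A)$ together with subadditivity of $\mathcal{H}^{s}$ and Lemma \ref{lem:H^s(vet)<H^s} applied to $B\setminus A$ yields
\[
\mathcal{H}^{s}(L(B))\leq \mathcal{H}^{s}(L(A))+\mathcal{H}^{s}(L(B\setminus A))\leq \mathcal{H}^{s}(L(A))+\left\Vert L\right\Vert^{s}\cdot\mathcal{H}^{s}(B\setminus A)=\mathcal{H}^{s}(L(A)).
\]
Combined with the first inequality, this gives the desired equality. There is no real obstacle here; the one mild subtlety is to invoke the measurability of $B$ to justify the additivity step despite $A$ not being assumed measurable, and to note that the case $\left\Vert L\right\Vert=0$ is trivial since then $L(B)=L(A)=\{0\}$.
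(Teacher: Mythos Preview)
Your argument has a genuine gap at the additivity step. You write
\[
\mathcal{H}^{s}(B)=\mathcal{H}^{s}(B\cap A)+\mathcal{H}^{s}(B\setminus A)
\]
and attribute this to the $\mathcal{H}^{s}$-measurability of $B$ applied to the test set $H=B$. But Carath\'eodory measurability of $B$ says that $B$ splits every test set additively; it does not say that $B$ itself is split additively by an arbitrary subset $A$. To get the displayed equation you would need $A$ to be $\mathcal{H}^{s}$-measurable, which is precisely what is not assumed. In fact the conclusion $\mathcal{H}^{s}(B\setminus A)=0$ is false in general: take $A$ to be a Bernstein set in $[0,1]$ and $s=1$; then $\mathcal{H}^{1}(A)=1$, $B=[0,1]$ is a measurable hull, yet $\mathcal{H}^{1}(B\setminus A)=1$. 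So your final chain of inequalities collapses.

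The paper circumvents this by never trying to measure $B\setminus A$. Instead it takes a (Borel) measurable hull $C\subseteq L(B)$ of $L(A)$, pulls it back to the measurable set $B\cap L^{-1}(C)$, observes that this set is another measurable hull of $A$ with the same finite measure as $B$, and then uses additivity on the two \emph{measurable} sets $B$ and $B\cap L^{-1}(C)$ to conclude $\mathcal{H}^{s}\bigl(B\setminus L^{-1}(C)\bigr)=0$. Pushing this null set forward via the Lipschitz map $L$ gives $\mathcal{H}^{s}\bigl(L(B)\setminus C\bigr)=0$, hence $\mathcal{H}^{s}(L(B))=\mathcal{H}^{s}(C)=\mathcal{H}^{s}(L(A))$. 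The key idea you are missing is to work with the hull of $L(A)$ rather than with $A$ directly.
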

\begin{proof}
Let $C$ be a $\mathcal{H}^{s}$-measurable hull of $L(A)$ such that
$C\subseteq L(B)$. Then $B\cap L^{-1}(C)$ is also a $\mathcal{H}^{s}$-measurable
hull of $A$. It follows that $\mathcal{H}^{s}(B\cap L^{-1}(C))=\mathcal{H}^{s}(A)=\mathcal{H}^{s}(B)$,
thus $\mathcal{H}^{s}\left(B\setminus B\cap L^{-1}(C)\right)=0$.
Hence $\mathcal{H}^{s}\left(L(B)\setminus C\right)=0$ and so $\mathcal{H}^{s}\left(L(A)\right)=\mathcal{H}^{s}\left(L(B)\right)$.
\end{proof}

\begin{lem}
\label{lem: Mag lemma}Let $0<l<d$ be integers, $v\in\mathbb{R}^{d}$,
$L:\mathbb{R}^{d}\longrightarrow\mathbb{R}^{d_{2}}$ be a linear map
with $\mathrm{\mathrm{rank}}(L)=l$ and $\mathcal{T}\subseteq\mathbb{O}_{d}$
be such that there exists $M\in G_{d,l}$ such that the set $\left\{ O(M):O\in\mathcal{T}\right\} $
is dense in $G_{d,l}$. Then there exists $O_{0}\in\overline{\mathcal{T}}$
such that $L\circ O_{0}(v)=0$.
\end{lem}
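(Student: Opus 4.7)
The plan is to reformulate the condition $L\circ O_{0}(v)=0$ in terms of orthogonality, upgrade the density hypothesis to a transitivity statement for the action of $\overline{\mathcal{T}}$ on $G_{d,l}$, and then choose a suitable $l$-dimensional subspace inside $v^{\perp}$.

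First I would set $W=(\ker L)^{\perp}$, which is $l$-dimensional since $\mathrm{rank}(L)=l$. Observe that $L\circ O_{0}(v)=0$ is equivalent to $O_{0}(v)\in\ker L$, i.e.\ $O_{0}(v)\perp W$, and since $O_{0}$ is orthogonal this is in turn equivalent to $v\perp O_{0}^{-1}(W)$. So the goal becomes: find $O_{0}\in\overline{\mathcal{T}}$ with $O_{0}^{-1}(W)\subseteq v^{\perp}$.

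Next I would argue that $\{O(W'):O\in\overline{\mathcal{T}}\}=G_{d,l}$ for \emph{every} $W'\in G_{d,l}$. Since $\mathbb{O}_{d}$ is a compact topological group, $\overline{\mathcal{T}}$ is a compact subgroup, so the continuous map $O\mapsto O(M)$ sends $\overline{\mathcal{T}}$ to a compact, hence closed, subset of $G_{d,l}$; by hypothesis this subset is dense, so it equals $G_{d,l}$. A simple translation trick---given any $W'\in G_{d,l}$, write $W'=O_{1}(M)$ for some $O_{1}\in\overline{\mathcal{T}}$ and use that $\overline{\mathcal{T}}\cdot O_{1}=\overline{\mathcal{T}}$---extends this to every $W'$, and in particular to our $W$.

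To finish, if $v=0$ any $O_{0}\in\overline{\mathcal{T}}$ works, so assume $v\neq0$. Since $l<d$, the hyperplane $v^{\perp}$ has dimension $d-1\geq l$, so it contains at least one $l$-dimensional subspace $N$. By the previous step there is $O'\in\overline{\mathcal{T}}$ with $O'(W)=N$; setting $O_{0}=(O')^{-1}\in\overline{\mathcal{T}}$ gives $O_{0}^{-1}(W)=N\subseteq v^{\perp}$, which by the reformulation yields $L\circ O_{0}(v)=0$. The only mildly nontrivial point is the promotion of density at one basepoint to transitivity on all of $G_{d,l}$, and this is purely a compactness-and-group-closure argument.
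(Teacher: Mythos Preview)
Your proof is correct and follows essentially the same route as the paper: set $W=(\ker L)^{\perp}$, pick an $l$-dimensional subspace of $v^{\perp}$, and use the action of $\overline{\mathcal{T}}$ on $G_{d,l}$ to carry one to the other. The only cosmetic difference is that the paper sends the subspace $M\subseteq v^{\perp}$ to $W$ via $O_{0}$ directly, whereas you send $W$ to $N\subseteq v^{\perp}$ and then invert; you are also more explicit than the paper about the $v=0$ case and about why density at one basepoint upgrades to full transitivity (the paper simply declares this ``easy to see'').
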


\begin{proof}
It is easy to see that if there exists $M\in G_{d,l}$ such that the
set $\left\{ O(M):O\in\mathcal{T}\right\} $ is dense in $G_{d,l}$
then $\left\{ O(M):O\in\mathcal{T}\right\} $ is dense in $G_{d,l}$
for every $M\in G_{d,l}$. Let $M\in G_{d,l}$ be such that $M$ is
contained in the orthogonal complement of $v$. Since $\mathrm{\mathrm{rank}}(L)=l$
it follows that $\dim\left(\mathrm{Ker}(L)\right)=d-l$. Hence $\dim\left(\mathrm{Ker}(L)^{\bot}\right)=l$.
There exists $O_{0}\in\overline{\mathcal{T}}$ such that $O_{0}(M)=\mathrm{Ker}(L)^{\bot}$.
Since $v$ is orthogonal to $M$ it follows that $O_{0}(v)$ is orthogonal
to $O_{0}(M)=\mathrm{Ker}(L)^{\bot}$. Thus $O_{0}(v)\in\mathrm{Ker}(L)$,
so $L\circ O_{0}(v)=0$.
\end{proof}

\subsection{Vitali's covering theorem}

Let $H\subset\mathbb{R}^{d}$. A collection of sets $\mathcal{A}$
is called a Vitali cover of $H$ if for each $x\in H$, $\delta>0$
there exists $A\in\mathcal{A}$ with $x\in A$ and $0<\mathrm{diam}(A)<\delta$.

\begin{prop}
\label{prop:Vtali kov}Let $H\subset\mathbb{R}^{d}$ be a $\mathcal{H}^{s}$-measurable
set with $\mathcal{H}^{s}(H)<\infty$ and $B\subset\mathbb{R}^{d}$
be a closed set with $0<\mathrm{diam}(B)<\infty$ and $0<\mathcal{H}^{s}(B)<\infty$.
Let $\mathcal{A}$ be a Vitali cover of $H$ such that every element
of $\mathcal{A}$ is similar to $B$ and every element of $\mathcal{A}$
is contained in $H$. Then there exists a disjoint sequence of sets
(finite or countable) $A_{1},A_{2},\ldots\in\mathcal{A}$ such that
$\mathcal{H}^{s}\left(H\setminus\left(\bigcup_{i=1}^{\infty}A_{i}\right)\right)=0$.
\end{prop}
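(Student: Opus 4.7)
The plan is to carry out the standard greedy construction of Vitali's theorem, adapted so that the cover consists of similar copies of $B$ rather than balls. Since similarities preserve the ratio of $s$-dimensional Hausdorff measure to the $s$-th power of diameter, there is a constant $c := \mathcal{H}^{s}(B)/\mathrm{diam}(B)^{s} \in (0,\infty)$ such that $\mathcal{H}^{s}(A) = c\cdot\mathrm{diam}(A)^{s}$ for every $A \in \mathcal{A}$. Without loss of generality I may assume $\sup\{\mathrm{diam}(A) : A \in \mathcal{A}\} < \infty$, since discarding sets of large diameter preserves the Vitali property.

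I define $A_{1}, A_{2}, \ldots$ inductively. Given disjoint $A_{1}, \ldots, A_{n}$, let $d_{n} := \sup\{\mathrm{diam}(A) : A \in \mathcal{A},\ A \cap A_{i} = \emptyset \text{ for all } i \leq n\}$. If $d_{n} = 0$, then because the finite union $\bigcup_{i \leq n}A_{i}$ is closed and the Vitali property provides arbitrarily small sets of $\mathcal{A}$ through any point of $H$ missing it, we must have $H \subseteq \bigcup_{i \leq n}A_{i}$ and we are done. Otherwise I choose $A_{n+1} \in \mathcal{A}$ disjoint from $A_{1}, \ldots, A_{n}$ with $\mathrm{diam}(A_{n+1}) \geq d_{n}/2$. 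If this never terminates, then $\{A_{i}\}$ is an infinite disjoint family contained in $H$, so
\[
c\sum_{i} \mathrm{diam}(A_{i})^{s} = \sum_{i} \mathcal{H}^{s}(A_{i}) \leq \mathcal{H}^{s}(H) < \infty,
\]
and in particular $\mathrm{diam}(A_{i}) \to 0$.

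The core step is to prove $\mathcal{H}^{s}(E) = 0$, where $E := H \setminus \bigcup_{i} A_{i}$. Fix $N$ and $x \in E$. Since $\bigcup_{i \leq N}A_{i}$ is closed and misses $x$, the Vitali property yields $A \in \mathcal{A}$ with $x \in A$ and $A$ disjoint from $A_{1}, \ldots, A_{N}$. If $A$ were disjoint from every $A_{i}$, then $A$ would always remain a candidate and the greedy rule would force $\mathrm{diam}(A_{i+1}) \geq \mathrm{diam}(A)/2$ for every $i$, contradicting $\mathrm{diam}(A_{i}) \to 0$. So $A$ meets some $A_{k}$; taking the smallest such index $k$ gives $k > N$, and because $A$ was still a candidate at step $k$ we get $\mathrm{diam}(A_{k}) \geq \mathrm{diam}(A)/2$. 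Choosing any $z_{k} \in A_{k}$ and $y \in A \cap A_{k}$, the triangle inequality gives $|x - z_{k}| \leq \mathrm{diam}(A) + \mathrm{diam}(A_{k}) \leq 3\,\mathrm{diam}(A_{k})$. Hence $E \subseteq \bigcup_{k > N} \overline{B}(z_{k}, 3\,\mathrm{diam}(A_{k}))$.

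Setting $\delta_{N} := 6\sup_{k > N}\mathrm{diam}(A_{k}) \to 0$, each ball in the above covering has diameter at most $\delta_{N}$, so
\[
\mathcal{H}^{s}_{\delta_{N}}(E) \leq 6^{s} \sum_{k > N} \mathrm{diam}(A_{k})^{s} \longrightarrow 0,
\]
which forces $\mathcal{H}^{s}(E) = 0$. I expect the main obstacle to be the chaining step: because the sets in $\mathcal{A}$ are similar copies of $B$ rather than balls, the classical ``$5r$-covering'' argument must be replaced by the purely diameter-based estimate above, and the similarity of $B$ is then used precisely to convert $\sum \mathrm{diam}(A_{i})^{s} < \infty$ into a genuine Hausdorff-measure bound on the leftover set.
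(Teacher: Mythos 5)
Your proof is correct and rests on exactly the same key observation as the paper: since every $A\in\mathcal{A}$ is similar to $B$, one has $\mathrm{diam}(A)^{s}=\mathcal{H}^{s}(A)\cdot\mathrm{diam}(B)^{s}/\mathcal{H}^{s}(B)$, so disjointness inside $H$ forces $\sum_{i}\mathrm{diam}(A_{i})^{s}<\infty$ and the bad alternative in Vitali's theorem cannot occur. The only difference is that the paper simply cites the covering theorem \cite[Theorem 1.10]{falconer geom of fract sets} at this point, whereas you reprove it in full via the greedy half-maximal-diameter selection and the $3\,\mathrm{diam}(A_{k})$ chaining estimate; that extra work is sound (closedness of the $A_{i}$, their measurability, and $\mathrm{diam}(A_{i})\to0$ are all justified) but not needed beyond the citation.
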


Proposition \ref{prop:Vtali kov} follows from a version of Vitalis's
covering theorem \cite[Theorem 1.10]{falconer geom of fract sets}
because $\sum_{i=1}^{\infty}\mathrm{diam}(A_{i})^{s}=\infty$ is not
possible since by the similarity we have that $\sum_{i=1}^{\infty}\mathrm{diam}(A_{i})^{s}=\sum_{i=1}^{\infty}\mathcal{H}^{s}(A_{i})\cdot\frac{\mathrm{diam}(B)^{s}}{\mathcal{H}^{s}(B)}\leq\mathcal{H}^{s}(H)\cdot\frac{\mathrm{diam}(B)^{s}}{\mathcal{H}^{s}(B)}<\infty$.

\subsection{Self-similar sets}
\begin{prop}
\label{prop:.diam^s}Let $\left\{ S_{i}\right\} _{i=1}^{m}$ be an
SS-IFS with attractor $K$ and $s$ be the similarity dimension of
$\left\{ S_{i}\right\} _{i=1}^{m}$. Then $\mathcal{H}^{s}(K)\leq\mathrm{diam}(K)^{s}<\infty$.
\end{prop}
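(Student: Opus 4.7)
The plan is to use the iterated covers of $K$ by its cylinder sets and compute the $s$-dimensional Hausdorff pre-measure directly. First I would note that, by iterating the invariance equation $K=\bigcup_{i=1}^m S_i(K)$, for every $n\in\mathbb{N}$ we have the decomposition
\[
K=\bigcup_{\boldsymbol{\mathbf{i}}\in\mathcal{I}^n} K_{\boldsymbol{\mathbf{i}}},
\]
where each $K_{\boldsymbol{\mathbf{i}}}=S_{\boldsymbol{\mathbf{i}}}(K)$ is similar to $K$ with ratio $r_{\boldsymbol{\mathbf{i}}}=r_{i_1}\cdots r_{i_n}$, so $\mathrm{diam}(K_{\boldsymbol{\mathbf{i}}})=r_{\boldsymbol{\mathbf{i}}}\cdot\mathrm{diam}(K)$. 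Since $K$ is compact and nonempty, $\mathrm{diam}(K)<\infty$.

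Next I would exploit the defining equation $\sum_{i=1}^m r_i^s=1$ of the similarity dimension. Factoring the multi-index sum as a product gives the key identity
\[
\sum_{\boldsymbol{\mathbf{i}}\in\mathcal{I}^n} \mathrm{diam}(K_{\boldsymbol{\mathbf{i}}})^s = \mathrm{diam}(K)^s\cdot\left(\sum_{i=1}^m r_i^s\right)^{\!n}=\mathrm{diam}(K)^s
\]
for every $n$, so the total $s$-cost of the cylinder cover is constant in $n$.

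To conclude, I would take $\delta>0$ arbitrary and note that since $r_{\max}:=\max_i r_i<1$, for all sufficiently large $n$ we have $\max_{\boldsymbol{\mathbf{i}}\in\mathcal{I}^n}\mathrm{diam}(K_{\boldsymbol{\mathbf{i}}})\leq r_{\max}^n\cdot\mathrm{diam}(K)<\delta$. Hence $\{K_{\boldsymbol{\mathbf{i}}}\}_{\boldsymbol{\mathbf{i}}\in\mathcal{I}^n}$ is a $\delta$-cover of $K$, and by the identity above, $\mathcal{H}^s_\delta(K)\leq\mathrm{diam}(K)^s$. Letting $\delta\to 0$ yields $\mathcal{H}^s(K)\leq\mathrm{diam}(K)^s<\infty$.

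There is no real obstacle here — the argument is the standard upper bound in the self-similar setting and requires no separation condition, since we only need a covering estimate, not a lower bound. The only point requiring any care is ensuring the diameters shrink uniformly so that the covers are admissible at every scale $\delta>0$, which is immediate from $r_i<1$.
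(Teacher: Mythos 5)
Your proof is correct and is exactly the standard covering argument that the paper invokes by citing Hutchinson \cite[5.1 Prop(4)]{Hutchinson} rather than reproving: cover $K$ by the level-$n$ cylinders, use $\sum_{\boldsymbol{\mathbf{i}}\in\mathcal{I}^{n}}r_{\boldsymbol{\mathbf{i}}}^{s}=\left(\sum_{i=1}^{m}r_{i}^{s}\right)^{n}=1$, and let $n\to\infty$. Nothing is missing.
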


For details see \cite[5.1 Prop(4)]{Hutchinson}.
\begin{prop}
\label{prop:.implicitkov}Let $\left\{ S_{i}\right\} _{i=1}^{m}$
be an SS-IFS with attractor $K$. Then $\dim_{H}(K)=\underline{\dim}_{B}(K)=\overline{\dim}_{B}(K)=\dim_{P}(K)$
and $\mathcal{H}^{t}(K)<\infty$ where $t=\dim_{H}(K)$.
\end{prop}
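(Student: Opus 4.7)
The plan is to reduce the proposition to Falconer's implicit theorem (see for instance \cite[Chapter~3]{Falconer Techniques}). For any non-empty bounded subset of $\mathbb{R}^d$ one has the general inequalities
$$\dim_H K \leq \underline{\dim}_B K \leq \overline{\dim}_B K \qquad \text{and} \qquad \dim_H K \leq \dim_P K \leq \overline{\dim}_B K,$$
so it suffices to establish the single inequality $\overline{\dim}_B K \leq t$ together with $\mathcal{H}^t(K) < \infty$, where $t := \dim_H K$.

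Recall that the implicit theorem asserts: if $F \subseteq \mathbb{R}^d$ is a non-empty compact set, and if there exist constants $c, r_0 > 0$ such that for every $x \in F$ and every $0 < r < r_0$ one can find a map $\psi : F \to F \cap B(x, r)$ satisfying $c r |y - z| \leq |\psi(y) - \psi(z)| \leq r |y - z|$ for all $y, z \in F$, then $\overline{\dim}_B F = \dim_H F$ and $0 < \mathcal{H}^{\dim_H F}(F) < \infty$. My plan is to verify this hypothesis for $F = K$ using the cylinder maps $S_{\mathbf{i}}$ themselves as $\psi$, after a harmless rescaling of the parameter $r$ by $\mathrm{diam}(K)$.

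The essential step is the following claim: for every $x \in K$ and every sufficiently small $r > 0$ there exists a finite word $\mathbf{i} \in \bigcup_k \mathcal{I}^k$ with $r_{\min} \cdot r < r_{\mathbf{i}} \leq r$ and $x \in S_{\mathbf{i}}(K)$, where $r_{\min} := \min_{1 \leq i \leq m} r_i > 0$. Indeed, since $K = \bigcup_{\mathbf{j} \in \mathcal{I}^k} S_{\mathbf{j}}(K)$ for every $k$ and these cylinder sets shrink uniformly as $k \to \infty$, one may pick an infinite sequence $(i_1, i_2, \ldots)$ with $x \in S_{i_1 \cdots i_k}(K)$ for every $k$; choosing $k$ minimal with $r_{i_1 \cdots i_k} \leq r$, the minimality together with $r_{i_k} \geq r_{\min}$ forces $r_{i_1 \cdots i_k} > r_{\min} \cdot r$. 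Taking $\psi := S_{\mathbf{i}}$ gives a similarity $K \to K$ with ratio $r_{\mathbf{i}} \in (r_{\min} r, r]$ and image contained in $B(x, r_{\mathbf{i}} \mathrm{diam}(K)) \subseteq B(x, r \mathrm{diam}(K))$, which is exactly the implicit theorem's hypothesis with $c = r_{\min}$ (after rescaling $r$ by $\mathrm{diam}(K)$).

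The main obstacle is conceptual rather than technical. The natural covering $K \subseteq \bigcup_{\mathbf{i} \in \Lambda_r} S_{\mathbf{i}}(K)$, where $\Lambda_r$ is the usual stopping family of words of ratio $\approx r$, yields only the weaker bound $\overline{\dim}_B K \leq s$ with $s$ the similarity dimension, and in the presence of overlaps one can have $t < s$. Thus a direct counting argument cannot reach $\overline{\dim}_B K \leq t$. The implicit theorem bypasses this obstruction precisely because self-similarity supplies bi-Lipschitz copies of all of $K$ at every location and every scale, a structural input strong enough to force Hausdorff and box dimensions to coincide and to yield finiteness of $\mathcal{H}^t(K)$ simultaneously.
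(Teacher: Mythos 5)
Your proposal is correct and follows essentially the same route as the paper, which simply invokes Falconer's implicit theorem \cite[Thm 3.2]{Falconer Techniques}; your verification of its hypothesis via the cylinder maps $S_{\boldsymbol{\mathbf{i}}}$ with $r_{\min}r<r_{\boldsymbol{\mathbf{i}}}\leq r$ is the standard (and correct) way to apply it to self-similar sets without any separation condition. One caveat: your quoted version of the implicit theorem over-claims the conclusion $0<\mathcal{H}^{\dim_{H}F}(F)$ — this cannot follow from the stated hypothesis, since overlapping self-similar sets can have $\mathcal{H}^{t}(K)=0$ at $t=\dim_{H}K$ (see the paper's reference \cite{Pos pack paper}) while still satisfying your hypothesis; fortunately the proposition only asserts finiteness and the dimension equalities, so this misstatement does not affect the argument.
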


Proposition \ref{prop:.implicitkov} can be deduced by an application
of implicit methods \cite[Thm 3.2]{Falconer Techniques}.

\begin{prop}
\label{prop: OSC schief prop}Let $\left\{ S_{i}\right\} _{i=1}^{m}$
be an SS-IFS with attractor $K$ and let $s$ be the similarity dimension
of $\left\{ S_{i}\right\} _{i=1}^{m}$. Then the following are equivalent:

(i) $\left\{ S_{i}\right\} _{i=1}^{m}$ satisfies the OSC

(ii) $\dim_{H}(K)=s$ and $0<\mathcal{H}^{s}(K)<\infty$

(iii) $0<\mathcal{H}^{s}(K)$.
\end{prop}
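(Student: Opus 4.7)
The plan is to prove the cyclic chain of implications $(i) \Rightarrow (ii) \Rightarrow (iii) \Rightarrow (i)$. Of these, $(ii) \Rightarrow (iii)$ is immediate, $(iii) \Rightarrow (ii)$ follows quickly from Proposition \ref{prop:.diam^s}, and $(i) \Rightarrow (ii)$ is the classical theorem of Hutchinson. The whole difficulty is packed into the last implication $(iii) \Rightarrow (i)$, which is Schief's theorem \cite{Schief OSC}.

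For $(i) \Rightarrow (ii)$ I would follow Hutchinson's original argument. The OSC gives a nonempty open set $U$ with the cylinders $\{S_{\boldsymbol{\mathbf{i}}}(U)\}_{\boldsymbol{\mathbf{i}} \in \mathcal{I}^k}$ pairwise disjoint at every level $k$. Put the self-similar Bernoulli measure $\mu$ on $K$ with weights $\mu(K_{\boldsymbol{\mathbf{i}}}) = r_{\boldsymbol{\mathbf{i}}}^s$ (well-defined because $\sum r_i^s = 1$). A standard packing argument shows that only boundedly many cylinders $K_{\boldsymbol{\mathbf{i}}}$ of diameter comparable to $r$ can meet a given ball $B(x,r)$: each $S_{\boldsymbol{\mathbf{i}}}(U)$ contains a ball of radius comparable to $r_{\boldsymbol{\mathbf{i}}}$, and these are pairwise disjoint, so in $\mathbb{R}^d$ only $O(1)$ of them fit in a fixed neighbourhood. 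This yields $\mu(B(x,r)) \leq C r^s$ and hence $\mathcal{H}^s(K) > 0$ by the mass distribution principle, while $\mathcal{H}^s(K) < \infty$ is immediate from Proposition \ref{prop:.diam^s}.

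The implication $(ii) \Rightarrow (iii)$ is trivial. For $(iii) \Rightarrow (ii)$: the assumption $\mathcal{H}^s(K) > 0$ forces $\dim_H K \geq s$, while Proposition \ref{prop:.diam^s} gives $\mathcal{H}^s(K) \leq \mathrm{diam}(K)^s < \infty$, so $\dim_H K \leq s$; together $\dim_H K = s$ and $0 < \mathcal{H}^s(K) < \infty$.

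The main obstacle is $(iii) \Rightarrow (i)$, which is Schief's theorem. The strategy, which I would quote directly from \cite{Schief OSC} rather than re-derive, is to start from a point $x \in K$ of positive upper $s$-density with respect to $\mathcal{H}^s|_K$ (such a point exists by the standard density theorem, since $0 < \mathcal{H}^s(K) < \infty$). One then exploits the self-similar structure by pulling back balls around $x$ via inverse cylinder maps $S_{\boldsymbol{\mathbf{i}}}^{-1}$ to transfer the positive-density estimate uniformly across scales, and combines this with the combinatorial-geometric fact that finitely many disjoint balls of comparable size can fit near a given ball in $\mathbb{R}^d$ to show that the cylinders $K_{\boldsymbol{\mathbf{i}}}$ of any given level must have bounded overlap. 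From this bounded-overlap property one extracts an explicit open set $U$ (essentially a thickening of a density-rich subset of $K$) that witnesses the OSC. This extraction is the delicate step --- it is precisely where Schief's paper does its real work --- and is the reason $(iii) \Rightarrow (i)$ is much deeper than the other two implications.
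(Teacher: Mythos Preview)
Your proposal is correct and matches the paper's approach: the paper does not give a proof at all but simply writes ``For details see \cite{Schief OSC}'', so both you and the paper ultimately defer to Schief for the hard implication $(iii)\Rightarrow(i)$. Your additional exposition of the easy implications (Hutchinson's mass-distribution argument for $(i)\Rightarrow(ii)$, and the use of Proposition~\ref{prop:.diam^s} for $(iii)\Rightarrow(ii)$) is accurate and more detailed than what the paper provides, but it is not a different route---just a fuller account of the same standard result.
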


For details see \cite{Schief OSC}.

\subsection{Graph directed attractors}

We say that the GD-IFS $\left\{ S_{e}:e\in\mathcal{E}\right\} $ satisfies
the \textit{strong separation condition} (SSC) if
\[
K_{i}=\bigcup_{j=1}^{q}\bigcup_{e\in\mathcal{E}_{i,j}}S_{e}(K_{j})
\]
is a disjoint union for each $i\in\mathcal{V}$. We say that the GD-IFS
$\left\{ S_{e}:e\in\mathcal{E}\right\} $ satisfies the \textit{open
set condition} (OSC) if there exists a $q$-tuple of nonempty open
sets $\left(U_{1},\ldots,U_{q}\right)$ such that
\[
\bigcup_{j=1}^{q}\bigcup_{e\in\mathcal{E}_{i,j}}S_{e}(U_{j})\subseteq U_{i}
\]
and the union is disjoint for each $i\in\mathcal{V}$. It is easy
to see that SSC implies OSC for GD-IFSs.

\begin{prop}
\label{prop:finite measure GDA}Let $\left\{ S_{e}:e\in\mathcal{E}\right\} $
be a strongly connected GD-IFS with attractor $\left(K_{1},\ldots,K_{q}\right)$
and let $s$ be the similarity dimension of $\left\{ S_{e}:e\in\mathcal{E}\right\} $.
Then $\mathcal{H}^{s}(K_{i})<\infty$ for $i\in\mathcal{V}$.
\end{prop}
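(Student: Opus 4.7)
The plan is to cover each $K_i$ by cylinder sets at deep levels and control the resulting $s$-dimensional sum using Perron--Frobenius theory.

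First I would iterate the defining relation (\ref{eq:GDAdef}) to obtain, for every $k \in \mathbb{N}$,
\[
K_i = \bigcup_{j=1}^{q}\bigcup_{\mathbf{e}\in\mathcal{E}^{k}_{i,j}} S_{\mathbf{e}}(K_j),
\]
where for a path $\mathbf{e}=(e_1,\ldots,e_k)$ we write $S_{\mathbf{e}}=S_{e_1}\circ\cdots\circ S_{e_k}$ and $r_{\mathbf{e}}=r_{e_1}\cdots r_{e_k}$. Since every $r_e<1$ and the edge set is finite, $\max_{\mathbf{e}\in\bigcup_j\mathcal{E}^k_{i,j}} r_{\mathbf{e}}\to 0$ as $k\to\infty$, so this gives a $\delta_k$-cover of $K_i$ with $\delta_k\to 0$. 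Writing $D=\max_{1\le j\le q}\mathrm{diam}(K_j)<\infty$ and using $\mathrm{diam}(S_{\mathbf{e}}(K_j))=r_{\mathbf{e}}\cdot\mathrm{diam}(K_j)$, I obtain
\[
\mathcal{H}^{s}_{\delta_k}(K_i) \le \sum_{j=1}^{q}\sum_{\mathbf{e}\in\mathcal{E}^{k}_{i,j}} \bigl(r_{\mathbf{e}}\mathrm{diam}(K_j)\bigr)^{s} \le D^{s}\sum_{j=1}^{q} \bigl(A^{(s)}\bigr)^{k}_{i,j},
\]
where the last identity uses that entries of powers of $A^{(s)}$ precisely count $s$-weighted paths: $(A^{(s)})^k_{i,j}=\sum_{\mathbf{e}\in\mathcal{E}^k_{i,j}} r_{\mathbf{e}}^{s}$.

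Next I would use that strong connectedness of $G(\mathcal{V},\mathcal{E})$ means that for every pair $(i,j)$ there is some $k$ with $(A^{(s)})^{k}_{i,j}>0$, i.e.\ $A^{(s)}$ is an irreducible nonnegative matrix. By the Perron--Frobenius theorem for irreducible nonnegative matrices, the spectral radius $\rho(A^{(s)})=1$ is an eigenvalue with a strictly positive eigenvector $\mathbf{x}=(x_1,\ldots,x_q)$, $x_j>0$, satisfying $A^{(s)}\mathbf{x}=\mathbf{x}$. Iterating gives $(A^{(s)})^{k}\mathbf{x}=\mathbf{x}$, so for each $k$ and each $i$,
\[
x_i = \sum_{j=1}^{q} \bigl(A^{(s)}\bigr)^{k}_{i,j} x_j \ge \bigl(A^{(s)}\bigr)^{k}_{i,j} x_j
\]
for every $j$, which yields the uniform bound $(A^{(s)})^{k}_{i,j}\le x_i/x_j\le C$, where $C:=\max_{i,j} x_i/x_j<\infty$.

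Combining the two steps, $\mathcal{H}^{s}_{\delta_k}(K_i)\le D^{s}\cdot qC$ uniformly in $k$. Letting $k\to\infty$ and taking the supremum defining $\mathcal{H}^{s}$ gives
\[
\mathcal{H}^{s}(K_i) \le qCD^{s} < \infty,
\]
as required. The only genuinely nontrivial input is the appeal to Perron--Frobenius to produce the positive eigenvector for eigenvalue $1$ under the strong connectedness hypothesis; the rest is a direct covering computation analogous to the self-similar case (Proposition \ref{prop:.diam^s}).
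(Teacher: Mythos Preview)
Your argument is correct: the level-$k$ cylinder cover together with the Perron--Frobenius positive eigenvector for the irreducible matrix $A^{(s)}$ at eigenvalue $\rho(A^{(s)})=1$ gives the uniform bound on $\sum_j (A^{(s)})^k_{i,j}$ and hence on $\mathcal{H}^s_{\delta_k}(K_i)$. The paper does not prove this proposition itself but defers to \cite[p.~172]{Edgar-Measure-Topolog-first-edition}, where essentially the same covering-plus-eigenvector argument is carried out, so your approach matches the referenced one.
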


For the details of the proof of Proposition \ref{prop:finite measure GDA}
see \cite[p. 172]{Edgar-Measure-Topolog-first-edition}.

\begin{prop}
\label{prop:implicit GDA prop}Let $\left\{ S_{e}:e\in\mathcal{E}\right\} $
be a strongly connected GD-IFS with attractor $\left(K_{1},\ldots,K_{q}\right)$.
Then $\dim_{H}(K_{j})=\dim_{H}(K_{i})=\underline{\dim}_{B}(K_{i})=\overline{\dim}_{B}(K_{i})=\dim_{P}(K_{i})$
for each $i,j\in\mathcal{V}$ and $\mathcal{H}^{t}(K_{i})<\infty$
for each $i\in\mathcal{V}$ where $t=\dim_{H}(K_{i})$.
\end{prop}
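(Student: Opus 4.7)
The plan is to establish in turn: (a) all $\dim_H K_i$ coincide, and (b) the common value equals the three other dimensions and supports finite Hausdorff measure. Strong connectivity enters crucially in both parts.

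For (a), fix $i, j \in \mathcal{V}$. Strong connectivity supplies a directed path $(e_1,\ldots,e_n) \in \mathcal{E}^n_{i,j}$. Iterating the defining relation (\ref{eq:GDAdef}) $n$ times shows
\[
S_{e_1} \circ \cdots \circ S_{e_n}(K_j) \subseteq K_i.
\]
Since $S_{e_1} \circ \cdots \circ S_{e_n}$ is a similarity and hence bi-Lipschitz, Lemma \ref{lem:H^s(vet)<H^s} gives $\dim_H K_j \leq \dim_H K_i$, and symmetry yields equality.

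For (b), write $t$ for the common value of $\dim_H K_i$. For each $k$, iterating (\ref{eq:GDAdef}) expresses
\[
K_i = \bigcup_{j \in \mathcal{V}} \bigcup_{e \in \mathcal{E}^k_{i,j}} S_e(K_j)
\]
as a finite union of similar images of the $K_j$'s whose similarity ratios decay geometrically in $k$. Together with the reverse similar inclusions $S_f(K_i) \subseteq K_j$ produced by paths from $j$ to $i$, this furnishes the two-sided graph-directed self-similar structure needed to run Falconer's implicit-methods scheme \cite[Thm 3.2]{Falconer Techniques}, now indexed by vertices and paths rather than by a single index set. The resulting uniformly bounded cover argument gives $\mathcal{H}^t(K_i) < \infty$ and $\overline{\dim}_B K_i \leq t$; combined with the standard inequalities $\dim_H K_i \leq \underline{\dim}_B K_i \leq \overline{\dim}_B K_i$ and $\dim_H K_i \leq \dim_P K_i \leq \overline{\dim}_B K_i$, all four dimensions must equal $t$.

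The main obstacle is adapting the implicit-methods argument to the graph-directed setting without any separation condition; this is a classical computation relying precisely on strong connectivity to transfer covers of one $K_j$ back into comparable covers of $K_i$, and it is carried out for example in \cite{Mauldin Williams construction} and \cite[p.~172]{Edgar-Measure-Topolog-first-edition}.
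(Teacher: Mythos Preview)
Your proposal is correct and follows essentially the same approach as the paper: the paper simply states that the proposition ``can be deduced by the application of the implicit methods \cite[Theorem 3.2]{Falconer Techniques}'' without further detail, and you invoke the same reference while spelling out how strong connectivity supplies the similar copies in both directions needed to run that argument. Your explicit part (a) and the remark about transferring covers via paths are a reasonable expansion of what the paper leaves implicit.
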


Proposition \ref{prop:implicit GDA prop} can be deduced by the application
of the implicit methods \cite[Theorem 3.2]{Falconer Techniques}.

\begin{prop}
\label{prop: OSC GDA Wang}Let $\left\{ S_{e}:e\in\mathcal{E}\right\} $
be a strongly connected GD-IFS with attractor $\left(K_{1},\ldots,K_{q}\right)$
and let $s$ be the similarity dimension of $\left\{ S_{e}:e\in\mathcal{E}\right\} $.
Then the following are equivalent:

(i) $\left\{ S_{e}:e\in\mathcal{E}\right\} $ satisfies the OSC

(ii) $\dim_{H}(K_{i})=s$ and $0<\mathcal{H}^{s}(K_{i})<\infty$ for
each $i\in\mathcal{V}$

(iii) $0<\mathcal{H}^{s}(K_{i})$ for some $i\in\mathcal{V}$.
\end{prop}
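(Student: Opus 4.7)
The plan is to prove the chain (i) $\Rightarrow$ (ii) $\Rightarrow$ (iii) $\Rightarrow$ (i). The implication (ii) $\Rightarrow$ (iii) is immediate, and (i) $\Rightarrow$ (ii) proceeds by a standard Mauldin--Williams construction, so the real content is in (iii) $\Rightarrow$ (i), which adapts Schief's density argument for Proposition \ref{prop: OSC schief prop} to the graph-directed setting.

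For (i) $\Rightarrow$ (ii) I would use Perron--Frobenius: since the graph is strongly connected, the nonnegative matrix $A^{(s)}$ with $\rho(A^{(s)})=1$ admits a strictly positive right eigenvector $(v_{1},\ldots,v_{q})$. This produces a vector of probability measures $(\mu_{1},\ldots,\mu_{q})$ with $\mu_{i}$ supported on $K_{i}$ and satisfying
\[
\mu_{i}(E)=\sum_{j=1}^{q}\sum_{e\in\mathcal{E}_{i,j}}\frac{r_{e}^{s}\,v_{j}}{v_{i}}\,\mu_{j}\!\left(S_{e}^{-1}(E)\right).
\]
Feeding the OSC open sets $(U_{1},\ldots,U_{q})$ into a Besicovitch-type ball-counting argument yields $\mu_{i}(B(x,r))\leq Cr^{s}$ uniformly, so the mass distribution principle gives $\mathcal{H}^{s}(K_{i})>0$. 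Combined with Propositions \ref{prop:finite measure GDA} and \ref{prop:implicit GDA prop}, this produces $\dim_{H}(K_{i})=s$ with $0<\mathcal{H}^{s}(K_{i})<\infty$.

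For (iii) $\Rightarrow$ (i), assume $\mathcal{H}^{s}(K_{i_{0}})>0$. Strong connectivity gives for each $j\in\mathcal{V}$ a directed path from $j$ to $i_{0}$, hence a similar copy of $K_{i_{0}}$ inside $K_{j}$, so $\mathcal{H}^{s}(K_{j})>0$ for all $j$; Proposition \ref{prop:finite measure GDA} supplies finiteness, making each $K_{j}$ an $s$-set. Pick an upper density point $x\in K_{i_{0}}$ with $\limsup_{r\downarrow 0}r^{-s}\mathcal{H}^{s}(K_{i_{0}}\cap B(x,r))\geq\eta>0$. The heart of the argument is a Schief-type separation lemma in graph-directed form: there exists $c>0$ such that, for any vertex $i$ and any two distinct path-cylinders $S_{\mathbf{e}}(K_{j(\mathbf{e})})$ and $S_{\mathbf{e}'}(K_{j(\mathbf{e}')})$ of comparable diameter inside $K_{i}$, their mutual distance is at least $c$ times that diameter. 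This is proved by contradiction: if the separation failed along some sequence, then zooming into $x$ at the corresponding scales would produce arbitrarily many near-coinciding cylinders whose total $\mathcal{H}^{s}$-mass, weighted by the eigenvector coordinates $v_{j}$, would exceed $\mathcal{H}^{s}(K_{i})$. The uniform separation lets us define $U_{i}$ as an $\varepsilon$-neighbourhood of $K_{i}$ for $\varepsilon$ small depending only on $c$ and the contraction ratios, and verify that $\bigcup_{j,e}S_{e}(U_{j})\subseteq U_{i}$ is a disjoint union for each $i$.

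The main obstacle is precisely this graph-directed version of Schief's separation lemma: unlike the self-similar case, competing cylinders at a fixed scale may terminate at different vertices $j$, and one must use strong connectivity to transport positive upper density from $x\in K_{i_{0}}$ to comparable density bounds at every vertex, so that the separation constant $c$ comes out independent of the vertex and of the path lengths involved. Once this uniformity is in hand, the construction of $(U_{1},\ldots,U_{q})$ and the verification of OSC are routine.
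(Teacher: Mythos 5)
The paper gives no proof of this proposition at all: it simply cites Wang's paper \cite{Wang GDA OSC} (and the implication (i)$\Rightarrow$(ii) is the classical Mauldin--Williams theorem). So your task was to reconstruct the argument from the literature, and your outline of (i)$\Rightarrow$(ii) via the Perron--Frobenius eigenvector, the self-conformal measure identity and the mass distribution principle is the standard and correct route, as is the trivial (ii)$\Rightarrow$(iii).

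The hard direction (iii)$\Rightarrow$(i) contains a genuine gap, however: the ``Schief-type separation lemma'' you propose as the heart of the argument is false. Positive $\mathcal{H}^{s}$-measure does \emph{not} imply that distinct comparable-size cylinders are separated by a distance proportional to their diameter --- already for the self-similar set $[0,1]$ generated by $x\mapsto x/2$ and $x\mapsto x/2+1/2$ (which satisfies the OSC and has $\mathcal{H}^{1}([0,1])=1$) the two first-level cylinders touch, so no such constant $c>0$ exists. Accordingly your contradiction argument via upper density at a point $x$ only yields what Schief actually proves: a uniform \emph{bound on the number} of comparable cylinders meeting a ball of comparable radius, not a positive mutual distance. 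This defect propagates into the final step: an $\varepsilon$-neighbourhood of $K_{i}$ cannot in general serve as the open set $U_{i}$ (in the example above, $\varepsilon$-neighbourhoods of the two images of $(-\varepsilon,1+\varepsilon)$ overlap for every $\varepsilon>0$). Schief's proof, and Wang's graph-directed adaptation of it, instead construct the open set by a maximality trick: one selects a cylinder realizing the maximal neighbour count and takes $U$ to be (the interior of) the union over all words of the images of a suitable neighbourhood of that extremal cylinder; disjointness of $S_{e}(U_{j})$ then follows because an overlap would create a cylinder with strictly more neighbours than the maximum. Your observation that strong connectivity transports positivity of $\mathcal{H}^{s}$ from one vertex to all vertices is correct and is indeed needed, but the separation lemma and the neighbourhood construction must be replaced by the bounded-neighbour-count lemma and the maximality construction for the proof to go through.
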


For the details of the proof of Proposition \ref{prop: OSC GDA Wang}
see \cite{Wang GDA OSC}.

\subsection{Irreducible matrices}

Recall that for a GD-IFS $\left\{ S_{e}:e\in\mathcal{E}\right\} $
we define $A^{(s)}$ as in (\ref{eq:A^s def}) $A_{i,j}^{(s)}=\sum_{e\in\mathcal{E}_{i,j}}r_{e}^{s}$.
Then for the $k$th power of $A^{(s)}$ it follows that
\[
\left(A^{(s)}\right)_{i,j}^{k}=\sum_{\boldsymbol{\mathbf{e}}\in\mathcal{E}_{i,j}^{k}}r_{\boldsymbol{\mathbf{e}}}^{s}
\]
for all $i,j\in\mathcal{V}$. Thus $G\left(\mathcal{V},\mathcal{E}\right)$
is strongly connected if and only if for all $i,j\in\mathcal{V}$
there exists a positive integer $k$ such that $\left(A^{(s)}\right)_{i,j}^{k}>0$.

A $q\times q$ real matrix $A=(A_{i,j})$ is called \textit{non-negative}
and we write $A\geq0$ if $A_{i,j}\geq0$ for all $1\leq i,j\leq q$.
If $A_{i,j}>0$ holds for all indices $i,j$ then $A$ is called \textit{positive}
and we write $A>0$. For matrices $A$ and $B$ we write $A\geq B$
if $A-B\geq0$ and similarly we write $A>B$ if $A-B>0$. Similar
definitions and notations apply to vectors in $\mathbb{R}^{q}$.

A non-negative matrix $A\geq0$ is called \textit{irreducible} if
for every $1\leq i,j\leq q$ indices there exists a positive integer
$k$ such that $\left(A^{k}\right)_{i,j}>0$. We note that $k$ can
be chosen such that $k\leq q$ (see for example \cite[Lem 1.1.2]{Matrices application-Bapat-Raghavan}).
There are several equivalent definitions of irreducible matrices but
this definition is convenient for us. For a GD-IFS $\left\{ S_{e}:e\in\mathcal{E}\right\} $
we have that $A^{(s)}\geq0$ and $A^{(s)}$ is irreducible if and
only if $G\left(\mathcal{V},\mathcal{E}\right)$ is strongly connected.
Recall that $\rho(A)$ denotes the spectral radius of $A$. The following
theorem is the well-known Perron-Frobenius theorem.

\begin{thm}
\label{thm:Perron-Frobenius-Theorem}Let $A\geq0$ be a $q\times q$
irreducible matrix. Then

(i) there exist $y\in\mathbb{R}^{q}$, $y>0$ and $\lambda_{0}\in\mathbb{R}$,
$\lambda_{0}>0$ such that $Ay=\lambda_{0}y$,

(ii) the eigenvalue $\lambda_{0}$ is a simple root of the characteristic
polynomial of $A$,

(iii) $\rho(A)=\lambda_{0}$,

(iv) the only non-negative, nonzero eigenvectors of $A$ are the positive
scalar multiples of $y$.
\end{thm}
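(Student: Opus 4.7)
The plan is to follow the classical Collatz--Wielandt approach. First I would establish the engine of the proof: if $A\geq 0$ is irreducible then $(I+A)^{q-1}$ is entrywise positive. Indeed, irreducibility says that for each ordered pair $(i,j)$ some $A^{k}$ with $1\leq k\leq q-1$ has positive $(i,j)$-entry, and the binomial expansion $(I+A)^{q-1}=\sum_{k=0}^{q-1}\binom{q-1}{k}A^{k}$ collects all of these into a single strictly positive matrix. The key consequence is a positivity principle: if $w\geq 0$ with $w\neq 0$, then $(I+A)^{q-1}w>0$.

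For part (i), introduce the Collatz--Wielandt functional $r(x)=\min\{(Ax)_{i}/x_{i}:x_{i}>0\}$ on nonzero $x\geq 0$ and set $\lambda_{0}=\sup r$. By homogeneity restrict to the simplex $\Delta=\{x\geq 0:\sum_{i}x_{i}=1\}$. Although $r$ is only upper semicontinuous on $\Delta$, I would apply $(I+A)^{q-1}$ and use the inequality $r((I+A)^{q-1}x)\geq r(x)$; since $(I+A)^{q-1}(\Delta)$ is a compact set of strictly positive vectors on which $r$ is continuous, the supremum is attained at some $y>0$. Then I verify $Ay=\lambda_{0}y$: if $w:=Ay-\lambda_{0}y\geq 0$ were nonzero, the positivity principle applied to $w$ would give $Az-\lambda_{0}z>0$ for $z:=(I+A)^{q-1}y>0$, whence $r(z)>\lambda_{0}$, contradicting the definition of $\lambda_{0}$. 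Positivity of $\lambda_{0}$ follows since $r$ is strictly positive on the all-ones vector (each row of $A$ has a positive entry, by irreducibility and $q>1$, the case $q=1$ being trivial).

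Part (iii) is then immediate: for any complex eigenpair $(\mu,z)$ the entrywise triangle inequality yields $|\mu|\cdot|z|\leq A|z|$, so $r(|z|)\geq|\mu|$ and hence $|\mu|\leq\lambda_{0}$, which combined with $\lambda_{0}$ itself being an eigenvalue gives $\rho(A)=\lambda_{0}$. For (iv), any nonzero nonnegative eigenvector $x$ is strictly positive by the positivity principle; applying the result to $A^{\top}$ yields a strictly positive left eigenvector $u$ with $u^{\top}A=\lambda_{0}u^{\top}$, and if $Ax=\mu x$ then $\mu\,u^{\top}x=\lambda_{0}u^{\top}x$ with $u^{\top}x>0$, forcing $\mu=\lambda_{0}$. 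Comparing $x$ with $y$ via $t=\min_{i}y_{i}/x_{i}$ gives $y-tx\geq 0$ with a zero coordinate yet still satisfying the eigenvector equation, so by the positivity principle $y-tx=0$.

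Part (ii), the algebraic simplicity, I expect to be the main obstacle. Geometric simplicity is immediate from (iv). For algebraic simplicity the cleanest route is to compute $p'(\lambda_{0})$ where $p(\lambda)=\det(\lambda I-A)$, using the classical cofactor formula $p'(\lambda_{0})=\sum_{i=1}^{q}\det(\lambda_{0}I_{q-1}-A^{[i]})$, with $A^{[i]}$ the principal submatrix obtained by deleting row and column $i$. Each $A^{[i]}$ is nonnegative and is strictly dominated by $A$ in the sense detected by the left Perron eigenvector $u>0$; a short argument using $u^{\top}(\lambda_{0}I-A)=0$ shows $\rho(A^{[i]})<\lambda_{0}$, so $\det(\lambda_{0}I-A^{[i]})>0$ for every $i$, giving $p'(\lambda_{0})>0$ and hence that $\lambda_{0}$ is a simple root. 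Establishing this strict spectral inequality --- effectively a strict monotonicity of the spectral radius witnessed by the positivity of $u$ --- is the step that carries the real technical weight.
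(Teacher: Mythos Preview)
Your proposal outlines the classical Collatz--Wielandt proof of the Perron--Frobenius theorem, and the steps you sketch are correct and standard: positivity of $(I+A)^{q-1}$, existence of the Perron vector via the maximization of $r(x)$, the triangle-inequality bound for $\rho(A)$, uniqueness of the nonnegative eigenvector via the left Perron vector, and algebraic simplicity via the cofactor expansion of $p'(\lambda_{0})$ together with $\rho(A^{[i]})<\lambda_{0}$.

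However, the paper does not prove this theorem at all: it simply states it and writes ``For details see \cite[Thm 1.4.4]{Matrices application-Bapat-Raghavan}.'' So there is no approach to compare against; you have supplied a full (and correct) proof sketch where the paper merely cites a reference. If your goal is to match the paper, a one-line citation suffices; if your goal is to be self-contained, your Collatz--Wielandt outline is exactly the standard route and is fine as written.
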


For details see \cite[Thm 1.4.4]{Matrices application-Bapat-Raghavan}.

\begin{rem}
\label{Rem: A^s(y)=00003Dy where y=00003DH^s(K_i)}Let $\left\{ S_{e}:e\in\mathcal{E}\right\} $
be a strongly connected GD-IFS with attractor $\left(K_{1},\ldots,K_{q}\right)$
and let $s$ be the similarity dimension of $\left\{ S_{e}:e\in\mathcal{E}\right\} $.
Let $y_{i}=\mathcal{H}^{s}(K_{i})$ and $y^{\intercal}=(y_{1},\ldots,y_{q})$.
By \cite[Proposition 2]{Wang GDA OSC} we have $\mathcal{H}^{s}(K_{e}\cap K_{f})=0$
for $e,f\in\bigcup_{j=1}^{q}\mathcal{E}_{i,j}$, $e\neq f$. Hence
\[
y_{i}=\mathcal{H}^{s}(K_{i})=\sum_{j=1}^{q}\sum_{e\in\mathcal{E}_{i,j}}\mathcal{H}^{s}(K_{e})=\sum_{j=1}^{q}\sum_{e\in\mathcal{E}_{i,j}}r_{e}^{s}\cdot\mathcal{H}^{s}(K_{j})=\sum_{j=1}^{q}A_{i,j}^{(s)}\cdot y_{j}
\]
so
\[
y=A^{(s)}y.
\]
If $\left\{ S_{e}:e\in\mathcal{E}\right\} $ satisfies the OSC then
by Proposition \ref{prop: OSC GDA Wang} $y\in\mathbb{R}^{q}$, $y>0$.
In that case $y$ satisfies Theorem \ref{thm:Perron-Frobenius-Theorem}
with $1=\rho(A^{(s)})=\lambda_{0}$.
\end{rem}

\begin{cor}
\label{cor:RO(A)=00003D1 Cor}Let $A\geq0$ be a $q\times q$ irreducible
matrix. If there exists a non-negative, non-zero vector $u\in\mathbb{R}^{q}$
such that $Au=u$ then $\rho(A)=1$.
\end{cor}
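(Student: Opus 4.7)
The plan is to deduce the corollary directly from the Perron-Frobenius theorem (Theorem \ref{thm:Perron-Frobenius-Theorem}), which is the only nontrivial input needed. The hypothesis gives us a non-negative, nonzero eigenvector of $A$ with eigenvalue $1$; part (iv) of Perron-Frobenius forces this eigenvector to lie on the Perron ray, and then comparing eigenvalues pins down $\lambda_0 = 1$.

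More concretely, first I would invoke Theorem \ref{thm:Perron-Frobenius-Theorem} applied to the $q \times q$ irreducible non-negative matrix $A$ to obtain a strictly positive vector $y \in \mathbb{R}^q$ and a positive scalar $\lambda_0 > 0$ such that $Ay = \lambda_0 y$, with $\rho(A) = \lambda_0$ by item (iii), and such that by item (iv) every non-negative, nonzero eigenvector of $A$ is a positive scalar multiple of $y$. Next I would observe that the hypothesis $Au = u$ with $u \geq 0$, $u \neq 0$ presents $u$ as precisely such a non-negative, nonzero eigenvector of $A$, so there exists $c > 0$ with $u = cy$.

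Finally I would compute
\[
u = Au = A(cy) = c\,Ay = c\lambda_0 y = \lambda_0 u,
\]
and since $u \neq 0$ this forces $\lambda_0 = 1$, hence $\rho(A) = \lambda_0 = 1$, as claimed.

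There is no serious obstacle here; the only thing to be careful about is that Perron-Frobenius as stated in Theorem \ref{thm:Perron-Frobenius-Theorem} provides both the uniqueness clause (iv) and the identification of $\lambda_0$ with the spectral radius (iii), and both are needed: uniqueness identifies the eigenvalue of $u$ with $\lambda_0$, and (iii) then converts this into the statement about $\rho(A)$.
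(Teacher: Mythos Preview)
Your proof is correct and matches the paper's approach exactly: the paper simply states that the corollary follows from Theorem \ref{thm:Perron-Frobenius-Theorem}, and you have spelled out precisely how, using parts (iii) and (iv) to identify the eigenvalue $1$ with the Perron eigenvalue $\lambda_0 = \rho(A)$.
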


Corollary \ref{cor:RO(A)=00003D1 Cor} follows from Theorem \ref{thm:Perron-Frobenius-Theorem}.

\begin{lem}
\label{lem:A>Bmatrix}Let $A\geq B\geq0$ be $q\times q$ irreducible
matrices such that $A\neq B$. Then $\rho(A)>\rho(B)$.
\end{lem}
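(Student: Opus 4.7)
The plan is to combine the Perron--Frobenius theorem (Theorem \ref{thm:Perron-Frobenius-Theorem}) applied to $B$ and to $A^\intercal$ with a two-sided bilinear form argument $x^\intercal (A-B) y$. The key point is to obtain simultaneously a \emph{strictly positive} right Perron eigenvector of $B$ and a \emph{strictly positive} left Perron eigenvector of $A$, so that the weak entrywise inequality $A - B \geq 0$ with $A \neq B$ translates into a strict scalar inequality.

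First I would observe that $A^\intercal$ is irreducible whenever $A$ is, since $((A^\intercal)^k)_{i,j} = (A^k)_{j,i}$, so the definition of irreducibility is symmetric in the transpose. Applying Theorem \ref{thm:Perron-Frobenius-Theorem}(i),(iii) to $B$ produces $y \in \mathbb{R}^q$ with $y > 0$ and $B y = \rho(B) y$. Applying the same theorem to $A^\intercal$ (and noting $\rho(A^\intercal) = \rho(A)$) produces $x \in \mathbb{R}^q$ with $x > 0$ and $x^\intercal A = \rho(A) x^\intercal$.

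The second step is to evaluate $x^\intercal (A-B) y$ in two ways. Expanding via the eigenvector relations gives
\[
x^\intercal (A - B) y = x^\intercal A y - x^\intercal B y = (\rho(A) - \rho(B))\, x^\intercal y.
\]
On the other hand, since $A - B \geq 0$ has at least one strictly positive entry (because $A \neq B$) and both $x$ and $y$ have all coordinates strictly positive, the quadratic form $x^\intercal (A-B) y$ is strictly positive. Because $x^\intercal y > 0$, dividing yields $\rho(A) > \rho(B)$.

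I do not expect a real obstacle: the argument is a short standard computation once the Perron--Frobenius machinery is in place. The only subtlety worth flagging is the need for strict positivity of \emph{both} eigenvectors (not merely non-negativity), which is precisely what Theorem \ref{thm:Perron-Frobenius-Theorem}(i) provides under irreducibility, and which upgrades the obvious monotonicity $\rho(A) \geq \rho(B)$ into the strict inequality claimed.
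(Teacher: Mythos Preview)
Your proof is correct. The paper does not actually give its own argument for this lemma; it merely cites \cite[5.7.5]{Marcus-Ming Matrix survey}. Your approach supplies a short, self-contained proof using only the Perron--Frobenius theorem already stated in the paper (Theorem~\ref{thm:Perron-Frobenius-Theorem}) together with the elementary observation that $A^\intercal$ is irreducible whenever $A$ is. This is a standard and clean way to obtain the strict inequality, and in the context of the paper it has the advantage of removing an external dependency: every ingredient (strict positivity of the Perron eigenvectors for both $B$ and $A^\intercal$, and $\rho(A^\intercal)=\rho(A)$) is either stated explicitly or immediate.
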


Lemma \ref{lem:A>Bmatrix} follows from \cite[5.7.5]{Marcus-Ming Matrix survey}.

It follows from Lemma \ref{lem:A>Bmatrix} that for a GD-IFS $\rho(A^{(s)})$
is strictly decreasing in $s$.

\begin{lem}
\label{lem:simdim drop by eliminating function}Let $\left\{ S_{e}:e\in\mathcal{E}\right\} $
be a strongly connected GD-IFS with similarity dimension $s$ and
let $e_{0}\in\mathcal{E}$ such that $\left\{ S_{e}:e\in\mathcal{E}\setminus\left\{ e_{0}\right\} \right\} $
is a strongly connected GD-IFS with similarity dimension $s_{0}$.
Then $s_{0}<s$. 
\end{lem}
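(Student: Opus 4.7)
The plan is to work directly with the associated matrices. Write $A^{(s)}$ for the matrix of the full GD-IFS and $\widetilde{A}^{(\sigma)}$ for the matrix of the reduced GD-IFS $\{S_e : e\in\mathcal{E}\setminus\{e_0\}\}$ evaluated at exponent $\sigma$. Since both graphs are strongly connected, both matrices are non-negative and irreducible for every exponent, so all the Perron--Frobenius machinery (Theorem \ref{thm:Perron-Frobenius-Theorem} and Lemma \ref{lem:A>Bmatrix}) applies.

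Next, I would compare $A^{(s)}$ with $\widetilde{A}^{(s)}$ entry by entry. Let $i,j\in\mathcal{V}$ be the initial and terminal vertices of the edge $e_0$. By the definition (\ref{eq:A^s def}), $\widetilde{A}^{(s)}_{k,l}=A^{(s)}_{k,l}$ for $(k,l)\neq(i,j)$, while $\widetilde{A}^{(s)}_{i,j}=A^{(s)}_{i,j}-r_{e_0}^{s}$. Because $0<r_{e_0}<1$ and $s>0$, this shows $0\le\widetilde{A}^{(s)}\le A^{(s)}$ and $\widetilde{A}^{(s)}\neq A^{(s)}$. Lemma \ref{lem:A>Bmatrix} then gives the strict inequality
\[
\rho\bigl(\widetilde{A}^{(s)}\bigr)<\rho\bigl(A^{(s)}\bigr)=1,
\]
where the last equality is just the definition of the similarity dimension $s$ of the original GD-IFS.

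Finally, by definition of $s_0$ we have $\rho(\widetilde{A}^{(s_0)})=1$. The author has already observed (immediately after Lemma \ref{lem:A>Bmatrix}) that $\sigma\mapsto\rho(\widetilde{A}^{(\sigma)})$ is strictly decreasing, since increasing $\sigma$ strictly shrinks every nonzero entry $r_e^{\sigma}$. Combining $\rho(\widetilde{A}^{(s_0)})=1>\rho(\widetilde{A}^{(s)})$ with this strict monotonicity gives $s_0<s$, which is the desired conclusion.

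There is really no main obstacle here: the only thing to verify is that the reduced matrix remains irreducible, which is built into the hypothesis that $\{S_e:e\in\mathcal{E}\setminus\{e_0\}\}$ is strongly connected, and the rest is a one-line application of the strict monotonicity of the Perron eigenvalue under entrywise domination.
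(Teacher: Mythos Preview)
Your argument is correct and is exactly the intended one: the paper simply records that this lemma ``follows from Lemma~\ref{lem:A>Bmatrix}'', and what you have written is precisely the unpacking of that sentence --- compare $\widetilde{A}^{(s)}\le A^{(s)}$ entrywise with strict inequality at the $(i,j)$ entry, apply Lemma~\ref{lem:A>Bmatrix} to get $\rho(\widetilde{A}^{(s)})<1$, and then use the strict monotonicity of $\sigma\mapsto\rho(\widetilde{A}^{(\sigma)})$ (noted just after Lemma~\ref{lem:A>Bmatrix}) to conclude $s_0<s$. One cosmetic remark: you do not actually need $s>0$ to ensure $r_{e_0}^{s}>0$; positivity of $r_{e_0}$ alone suffices.
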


Lemma \ref{lem:simdim drop by eliminating function} follows from
Lemma \ref{lem:A>Bmatrix}.

\section{Proof of Proposition \ref{lem:Egyenlito disjoint lem T=00003Dinfty}\label{sec:Main-Lemma}}

In this section our main goal is to prove Proposition \ref{lem:Egyenlito disjoint lem T=00003Dinfty}
which provides an important tool to cope with the later results. Proposition
\ref{lem:Egyenlito disjoint lem T=00003Dinfty} is essential in the
proofs in Section \ref{sec:Hausdorff-measure-of} and Section \ref{sec:Transformation-groups-of}
and plays the role of separation conditions when no separation condition
is assumed.

Iterating $K=\bigcup_{i=1}^{m}S_{i}(K)$ gives
\begin{equation}
K=\bigcup_{\boldsymbol{\mathbf{i}}\in\mathcal{I}^{k}}S_{\boldsymbol{\mathbf{i}}}(K)=\bigcup_{\boldsymbol{\mathbf{i}}\in\mathcal{I}^{k}}K_{\boldsymbol{\mathbf{i}}}\label{eq:k th level union}
\end{equation}
for every positive integer $k$.

\begin{lem}
\label{lem:disjointing lem}Let $\left\{ S_{i}\right\} _{i=1}^{m}$
be an SS-IFS with attractor $K$ and $t=\dim_{H}(K)$. Then there
exists $\mathcal{J}\subseteq\bigcup_{k=1}^{\infty}\mathcal{I}^{k}$
such that $K_{\boldsymbol{\mathbf{i}}}\cap K_{\boldsymbol{\mathbf{j}}}=\emptyset$
for $\boldsymbol{\mathbf{i}},\boldsymbol{\mathbf{j}}\in\mathcal{J}$,
$\boldsymbol{\mathbf{i}}\neq\boldsymbol{\mathbf{j}}$ and $\mathcal{H}^{t}\left(K\setminus\left(\bigcup_{\boldsymbol{\mathbf{i}}\in\mathcal{J}}K_{\boldsymbol{\mathbf{i}}}\right)\right)=0$.
\end{lem}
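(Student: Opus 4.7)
The plan is to apply Vitali's covering theorem (Proposition \ref{prop:Vtali kov}) to the natural collection of cylinder sets $K_{\boldsymbol{\mathbf{i}}}$. The reason this should succeed is that every $K_{\boldsymbol{\mathbf{i}}} = S_{\boldsymbol{\mathbf{i}}}(K)$ is a geometrically similar copy of $K$ contained in $K$, and the identity (\ref{eq:k th level union}) together with the fact that $\mathrm{diam}(K_{\boldsymbol{\mathbf{i}}}) = r_{\boldsymbol{\mathbf{i}}}\cdot\mathrm{diam}(K) \to 0$ as the length of $\boldsymbol{\mathbf{i}}$ grows guarantees that the family of cylinders forms a Vitali cover of $K$.

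First I would dispose of the trivial case $\mathcal{H}^{t}(K)=0$: choose $\mathcal{J}=\emptyset$ and there is nothing to prove. Assume therefore that $\mathcal{H}^{t}(K)>0$. By Proposition \ref{prop:.implicitkov} we also have $\mathcal{H}^{t}(K)<\infty$, and by the global assumption $K$ contains at least two points so $0<\mathrm{diam}(K)<\infty$. Since $K$ is compact it is $\mathcal{H}^{t}$-measurable. Thus with $H=B=K$ and $s=t$, the three hypotheses on $H$ and $B$ in Proposition \ref{prop:Vtali kov} are satisfied.

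Next I would set $\mathcal{A}=\{K_{\boldsymbol{\mathbf{i}}}:\boldsymbol{\mathbf{i}}\in\bigcup_{k=1}^{\infty}\mathcal{I}^{k}\}$ and verify that $\mathcal{A}$ meets the remaining requirements. Each $K_{\boldsymbol{\mathbf{i}}}=S_{\boldsymbol{\mathbf{i}}}(K)$ is similar to $K$ with ratio $r_{\boldsymbol{\mathbf{i}}}$, and is contained in $K$ by (\ref{eq:k th level union}). For the Vitali-cover property, fix $x\in K$ and $\delta>0$; since $\max_{i}r_{i}<1$, one can pick $k$ so large that $(\max_{i}r_{i})^{k}\cdot\mathrm{diam}(K)<\delta$, and then by (\ref{eq:k th level union}) there is $\boldsymbol{\mathbf{i}}\in\mathcal{I}^{k}$ with $x\in K_{\boldsymbol{\mathbf{i}}}$, giving $0<\mathrm{diam}(K_{\boldsymbol{\mathbf{i}}})<\delta$. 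Proposition \ref{prop:Vtali kov} then yields a disjoint sequence $A_{1},A_{2},\ldots\in\mathcal{A}$ with $\mathcal{H}^{t}(K\setminus\bigcup_{n}A_{n})=0$.

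Finally, for each $A_{n}$ I would select one index $\boldsymbol{\mathbf{i}}^{(n)}$ with $K_{\boldsymbol{\mathbf{i}}^{(n)}}=A_{n}$ (arbitrarily if the choice is not unique, which happens precisely when there are exact overlaps) and set $\mathcal{J}=\{\boldsymbol{\mathbf{i}}^{(n)}:n\geq 1\}$. Since the $A_{n}$ are pairwise disjoint as sets, so are the $K_{\boldsymbol{\mathbf{i}}}$ with $\boldsymbol{\mathbf{i}}\in\mathcal{J}$, and the required measure equation is immediate. The only point that requires attention — really the only subtlety — is the possibility of distinct indices defining the same cylinder, which is handled by working at the level of sets within $\mathcal{A}$ and then picking one representing index per set; no genuine obstacle arises.
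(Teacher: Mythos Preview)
Your proof is correct and follows exactly the same approach as the paper: apply Proposition~\ref{prop:Vtali kov} with $H=B=K$ and the Vitali cover $\mathcal{A}=\{K_{\boldsymbol{\mathbf{i}}}:\boldsymbol{\mathbf{i}}\in\bigcup_{k=1}^{\infty}\mathcal{I}^{k}\}$. Your write-up is in fact more careful than the paper's, as you explicitly separate the trivial case $\mathcal{H}^{t}(K)=0$ (needed since Proposition~\ref{prop:Vtali kov} assumes $0<\mathcal{H}^{s}(B)$) and address the harmless ambiguity in passing from disjoint sets $A_n$ back to a set $\mathcal{J}$ of indices.
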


\begin{proof}
$\mathcal{H}^{t}(K)<\infty$ by Proposition \ref{prop:.implicitkov}.
Let $\mathcal{A}=\left\{ K_{\boldsymbol{\mathbf{i}}}:\boldsymbol{\mathbf{i}}\in\bigcup_{k=1}^{\infty}\mathcal{I}^{k}\right\} $.
Then $\mathcal{A}$ is a Vitali cover of $K$ and hence Proposition
\ref{prop:Vtali kov} provides a $\mathcal{J}$ with the required
properties.
\end{proof}

\begin{rem}
\label{rem: disjointing rem}In Lemma \ref{lem:disjointing lem} for
a fixed $\delta>0$ we can further assume that $\mathrm{diam}(K_{\boldsymbol{\mathbf{i}}})<\delta$
for every $\boldsymbol{\mathbf{i}}\in\mathcal{J}$ because in the
proof we can take $\mathcal{A}=\left\{ K_{\boldsymbol{\mathbf{i}}}:\boldsymbol{\mathbf{i}}\in\bigcup_{k=N}^{\infty}\mathcal{I}^{k}\right\} $
for $N$ large enough.
\end{rem}

\begin{lem}
\label{lem:Dense group lem}Let $O\in\overline{\mathcal{T}}$ and
$\delta>0$. Then for each $O_{2}\in\overline{\mathcal{T}}$ there
exists $\boldsymbol{\mathbf{j}}\in\bigcup_{k=1}^{\infty}\mathcal{I}^{k}$
such that $\left\Vert O_{2}\circ T_{\boldsymbol{\mathbf{j}}}-O\right\Vert <\delta$.
\end{lem}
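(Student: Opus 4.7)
The plan is to reduce the claim to the density statement of Lemma \ref{lem:lem_i2} via left-multiplication by $O_2^{-1}$, exploiting the fact that the Euclidean operator norm is invariant under pre-composition with orthogonal maps.

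First, I would observe that the set $\mathcal{S}=\{T_{\boldsymbol{\mathbf{j}}}:\boldsymbol{\mathbf{j}}\in\bigcup_{k=1}^{\infty}\mathcal{I}^{k}\}$ is precisely the semigroup generated by $T_{1},\ldots,T_{m}$ in $\mathbb{O}_{d}$. By Lemma \ref{lem:lem_i2}, $\mathcal{S}$ is dense in the group $\mathcal{T}$, and hence dense in $\overline{\mathcal{T}}$. Next, since $\mathbb{O}_{d}$ is a topological group and $\mathcal{T}$ is a subgroup, its closure $\overline{\mathcal{T}}$ is also a (closed) subgroup of $\mathbb{O}_{d}$. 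In particular $O_{2}^{-1}\in\overline{\mathcal{T}}$ and therefore $O_{2}^{-1}\circ O\in\overline{\mathcal{T}}$.

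The key point is then that pre-composition by the orthogonal map $O_{2}$ is an isometry of the operator norm: for any $T\in\mathbb{O}_{d}$ and any $x\in\mathbb{R}^{d}$,
\[
\left\Vert O_{2}\circ T(x)-O(x)\right\Vert =\left\Vert O_{2}\bigl(T(x)-O_{2}^{-1}\circ O(x)\bigr)\right\Vert =\left\Vert T(x)-O_{2}^{-1}\circ O(x)\right\Vert ,
\]
because $O_{2}$ preserves the Euclidean norm. Taking the supremum over $\left\Vert x\right\Vert =1$ yields $\left\Vert O_{2}\circ T-O\right\Vert =\left\Vert T-O_{2}^{-1}\circ O\right\Vert $.

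Finally, since $\mathcal{S}$ is dense in $\overline{\mathcal{T}}$ and $O_{2}^{-1}\circ O\in\overline{\mathcal{T}}$, I can pick $\boldsymbol{\mathbf{j}}\in\bigcup_{k=1}^{\infty}\mathcal{I}^{k}$ with $\left\Vert T_{\boldsymbol{\mathbf{j}}}-O_{2}^{-1}\circ O\right\Vert <\delta$; the displayed identity then gives $\left\Vert O_{2}\circ T_{\boldsymbol{\mathbf{j}}}-O\right\Vert <\delta$, as required. There is no real obstacle here beyond remembering that $\overline{\mathcal{T}}$ is a group and that the operator norm is invariant under composition with an orthogonal map; the bulk of the work has already been done in Lemma \ref{lem:lem_i2}.
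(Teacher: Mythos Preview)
Your proof is correct and follows exactly the same route as the paper: reduce to approximating $O_{2}^{-1}\circ O\in\overline{\mathcal{T}}$ by some $T_{\boldsymbol{\mathbf{j}}}$ using Lemma~\ref{lem:lem_i2}, and translate back via the operator-norm identity $\left\Vert O_{2}\circ T_{\boldsymbol{\mathbf{j}}}-O\right\Vert =\left\Vert T_{\boldsymbol{\mathbf{j}}}-O_{2}^{-1}\circ O\right\Vert$. The paper's proof is the one-line version of what you have written out in full.
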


\begin{proof}
By Lemma \ref{lem:lem_i2} we can find $\boldsymbol{\mathbf{j}}\in\bigcup_{k=1}^{\infty}\mathcal{I}^{k}$
such that $\left\Vert O_{2}\circ T_{\boldsymbol{\mathbf{j}}}-O\right\Vert =\left\Vert T_{\boldsymbol{\mathbf{j}}}-O_{2}^{-1}\circ O\right\Vert <\delta$.
\end{proof}

\begin{notation}
For $\boldsymbol{\mathbf{i}}=(i_{1},\ldots,i_{k_{1}}),\boldsymbol{\mathbf{j}}=(j_{1},\ldots,j_{k_{2}})\in\bigcup_{k=1}^{\infty}\mathcal{I}^{k}$
let $\boldsymbol{\mathbf{i}}*\boldsymbol{\mathbf{j}}=(i_{1},\ldots,i_{k_{1}},j_{1},\ldots,j_{k_{2}})$.
\end{notation}

In a metric space $(X,d)$ we call a collection $\mathcal{U}$ of
subsets of $X$ a \textit{$\delta$-cover} for some $\delta>0$, if
$\mathcal{U}$ is a cover of $X$ and $\mathrm{diam}(U)<\delta$ for
every $U\in\mathcal{U}$.\\

\noindent \textit{Proof of Proposition \ref{lem:Egyenlito disjoint lem T=00003Dinfty}.}
Since $\overline{\mathcal{T}}$ is compact there exists a finite open
$\frac{\delta}{2}$-cover $\left\{ U_{i}\right\} _{i=1}^{q}$ of $\overline{\mathcal{T}}$.
Let $\mathcal{V}=\left\{ 1,\ldots,q\right\} $ and for every $i\in\mathcal{V}$
fix $O_{i}\in U_{i}\bigcap\overline{\mathcal{T}}$. By virtue of Lemma
\ref{lem:Dense group lem}, for each $i\in\mathcal{V}$ we find $\boldsymbol{\mathbf{j}}_{i}\in\bigcup_{k=1}^{\infty}\mathcal{I}^{k}$
such that $\left\Vert O_{i}\circ T_{\boldsymbol{\mathbf{j}}_{i}}-O\right\Vert <\frac{\delta}{2}$.
So for every $\boldsymbol{\mathbf{i}}\in\bigcup_{k=1}^{\infty}\mathcal{I}^{k}$
there exists $i\in\mathcal{V}$ such that $T_{\boldsymbol{\mathbf{i}}}\in U_{i}$.
Then $\left\Vert T_{\boldsymbol{\mathbf{i}}}-O_{i}\right\Vert <\frac{\delta}{2}$
and $\left\Vert O_{i}\circ T_{\boldsymbol{\mathbf{j}}_{i}}-O\right\Vert <\frac{\delta}{2}$,
thus
\begin{equation}
\left\Vert T_{\boldsymbol{\mathbf{i}}*\boldsymbol{\mathbf{j}}_{i}}-O\right\Vert <\delta.\label{eq:T_iekj_i<delta}
\end{equation}

By Proposition \ref{prop:.implicitkov} $\mathcal{H}^{t}(K)<\infty$.
Let $\mathcal{J}\subseteq\bigcup_{k=1}^{\infty}\mathcal{I}^{k}$ be
the set provided by Lemma \ref{lem:disjointing lem}. We define a
sequence of sets $\mathcal{I}_{1},\mathcal{I}_{2},\ldots\subseteq\bigcup_{k=1}^{\infty}\mathcal{I}^{k}$
inductively. Let $\mathcal{I}_{1}=\mathcal{J}$. Given $\mathcal{I}_{n}$
is defined we define $\mathcal{I}_{n+1}$ as follows. For each $\boldsymbol{\mathbf{i}}\in\mathcal{I}_{n}$
we define a set $\mathcal{I}_{n+1,\boldsymbol{\mathbf{i}}}$. If $\left\Vert T_{\boldsymbol{\mathbf{i}}}-O\right\Vert <\delta$
then let $\mathcal{I}_{n+1,\boldsymbol{\mathbf{i}}}=\left\{ \boldsymbol{\mathbf{i}}\right\} $.
If $\left\Vert T_{\boldsymbol{\mathbf{i}}}-O\right\Vert \geq\delta$
then $T_{\boldsymbol{\mathbf{i}}}\in U_{i}$ for some $i\in\mathcal{V}$
and $\left\{ K_{\boldsymbol{\mathbf{i}}*\boldsymbol{\mathbf{j}}}:\boldsymbol{\mathbf{j}}\in\bigcup_{k=1}^{\infty}\mathcal{I}^{k},K_{\boldsymbol{\mathbf{i}}*\boldsymbol{\mathbf{j}}}\cap K_{\boldsymbol{\mathbf{i}}*\boldsymbol{\mathbf{j}}_{i}}=\emptyset\right\} $
is a Vitali cover of $K_{\boldsymbol{\mathbf{i}}}\setminus K_{\boldsymbol{\mathbf{i}}*\boldsymbol{\mathbf{j}}_{i}}$,
hence by Proposition \ref{prop:Vtali kov} there exists $\mathcal{J}_{n+1,\boldsymbol{\mathbf{i}}}\subseteq\left\{ \boldsymbol{\mathbf{j}}:\boldsymbol{\mathbf{j}}\in\bigcup_{k=1}^{\infty}\mathcal{I}^{k},K_{\boldsymbol{\mathbf{i}}*\boldsymbol{\mathbf{j}}}\cap K_{\boldsymbol{\mathbf{i}}*\boldsymbol{\mathbf{j}}_{i}}=\emptyset\right\} $
such that $K_{\boldsymbol{\mathbf{i}}*\boldsymbol{\mathbf{i}}_{1}}\cap K_{\boldsymbol{\mathbf{i}}*\boldsymbol{\mathbf{i}}_{2}}=\emptyset$
for $\boldsymbol{\mathbf{i}}_{1},\boldsymbol{\mathbf{i}}_{2}\in\mathcal{J}_{n+1,\boldsymbol{\mathbf{i}}}$,
$\boldsymbol{\mathbf{i}}_{1}\neq\boldsymbol{\mathbf{i}}_{2}$, and
$\mathcal{H}^{t}\left(\left(K_{\boldsymbol{\mathbf{i}}}\setminus K_{\boldsymbol{\mathbf{i}}*\boldsymbol{\mathbf{j}}_{i}}\right)\setminus\left(\bigcup_{\boldsymbol{\mathbf{j}}\in\mathcal{J}_{n+1,\boldsymbol{\mathbf{i}}}}K_{\boldsymbol{\mathbf{i}}*\boldsymbol{\mathbf{j}}}\right)\right)=0$.
Then let $\mathcal{I}_{n+1,\boldsymbol{\mathbf{i}}}=\left\{ \boldsymbol{\mathbf{i}}*\boldsymbol{\mathbf{j}}_{i}\right\} \bigcup\left\{ \boldsymbol{\mathbf{i}}*\boldsymbol{\mathbf{j}}:\boldsymbol{\mathbf{j}}\in\mathcal{J}_{n+1,\boldsymbol{\mathbf{i}}}\right\} $
and let $\mathcal{I}_{n+1}=\bigcup_{\boldsymbol{\mathbf{i}}\in\mathcal{I}_{n}}\mathcal{I}_{n+1,\boldsymbol{\mathbf{i}}}$.

Now we define $\mathcal{I}_{\infty}=\bigcap_{n_{1}=1}^{\infty}\bigcup_{n_{2}=n_{1}}^{\infty}\mathcal{I}_{n_{2}}$.
Clearly $K_{\boldsymbol{\mathbf{i}}}\cap K_{\boldsymbol{\mathbf{j}}}=\emptyset$
for $\boldsymbol{\mathbf{i}},\boldsymbol{\mathbf{j}}\in\mathcal{I}_{\infty}$,
$\boldsymbol{\mathbf{i}}\neq\boldsymbol{\mathbf{j}}$. If $\boldsymbol{\mathbf{i}}\in\mathcal{I}_{n}$
and $\left\Vert T_{\boldsymbol{\mathbf{i}}}-O\right\Vert \geq\delta$
then $\boldsymbol{\mathbf{i}}\notin\mathcal{I}_{n+l}$ for every positive
integer $l$, hence $\boldsymbol{\mathbf{i}}\notin\mathcal{I}_{\infty}$.
So $\left\Vert T_{\boldsymbol{\mathbf{i}}}-O\right\Vert <\delta$
for all $\boldsymbol{\mathbf{i}}\in\mathcal{I}_{\infty}$. Let $r_{\mathrm{\mathrm{min}}}=\min\left\{ r_{\boldsymbol{\mathbf{j}}_{i}}:i\in\mathcal{V}\right\} >0$.
Clearly
\begin{equation}
\mathcal{H}^{t}\left(K\setminus\left(\bigcup_{\boldsymbol{\mathbf{i}}\in\mathcal{I}_{n}}K_{\boldsymbol{\mathbf{i}}}\right)\right)=0\label{eq:Egyenlito lem 1}
\end{equation}
for every positive integer $n$. For $\boldsymbol{\mathbf{i}}\in\mathcal{I}_{n}$
such that $\left\Vert T_{\boldsymbol{\mathbf{i}}}-O\right\Vert \geq\delta$
and $T_{\boldsymbol{\mathbf{i}}}\in U_{i}$ for some $i\in\mathcal{V}$
(if there are more than one such $i$ then we choose the one that
was used above to define the sequence $\mathcal{J}_{n+1,\boldsymbol{\mathbf{i}}}$)
we have that $\left\{ \boldsymbol{\mathbf{j}}:\boldsymbol{\mathbf{i}}*\boldsymbol{\mathbf{j}}\in\mathcal{I}_{n+1},\left\Vert T_{\boldsymbol{\mathbf{i}}*\boldsymbol{\mathbf{j}}}-O\right\Vert \geq\delta\right\} \subseteq\mathcal{J}_{n+1,\boldsymbol{\mathbf{i}}}$
and $\mathcal{H}^{t}(K_{\boldsymbol{\mathbf{i}}*\boldsymbol{\mathbf{j}}_{i}})=r_{\boldsymbol{\mathbf{j}}_{i}}^{t}\mathcal{H}^{t}(K_{\boldsymbol{\mathbf{i}}})\geq r_{\mathrm{\min}}^{t}\mathcal{H}^{t}(K_{\boldsymbol{\mathbf{i}}})$,
and in the mean time $\left\Vert T_{\boldsymbol{\mathbf{i}}*\boldsymbol{\mathbf{j}}_{i}}-O\right\Vert <\delta$
by (\ref{eq:T_iekj_i<delta}). Therefore $\mathcal{I}_{n+1}\setminus\mathcal{I}_{\infty}\subseteq\bigcup_{\boldsymbol{\mathbf{i}}\in\mathcal{I}_{n}\setminus\mathcal{I}_{\infty}}\left\{ \boldsymbol{\mathbf{i}}*\boldsymbol{\mathbf{j}}:\boldsymbol{\mathbf{j}}\in\mathcal{J}_{n+1,\boldsymbol{\mathbf{i}}}\right\} $
and 
\[
\begin{aligned}\mathcal{H}^{t}\left(\bigcup_{\boldsymbol{\mathbf{i}}\in\mathcal{I}_{n+1}\setminus\mathcal{I}_{\infty}}K_{\boldsymbol{\mathbf{i}}}\right) & \leq\sum_{\boldsymbol{\mathbf{i}}\in\mathcal{I}_{n}\setminus\mathcal{I}_{\infty}}\sum_{\boldsymbol{\mathbf{j}}\in\mathcal{J}_{n+1,\boldsymbol{\mathbf{i}}}}\mathcal{H}^{t}(K_{\boldsymbol{\mathbf{i}}*\boldsymbol{\mathbf{j}}})\leq\sum_{\boldsymbol{\mathbf{i}}\in\mathcal{I}_{n}\setminus\mathcal{I}_{\infty}}\left(\mathcal{H}^{t}(K_{\boldsymbol{\mathbf{i}}})-r_{\mathrm{min}}^{t}\mathcal{H}^{t}(K_{\boldsymbol{\mathbf{i}}})\right)\\
 & =\sum_{\boldsymbol{\mathbf{i}}\in\mathcal{I}_{n}\setminus\mathcal{I}_{\infty}}(1-r_{\mathrm{min}}^{t})\cdot\mathcal{H}^{t}(K_{\boldsymbol{\mathbf{i}}})=(1-r_{\mathrm{min}}^{t})\cdot\mathcal{H}^{t}\left(\bigcup_{\boldsymbol{\mathbf{i}}\in\mathcal{I}_{n}\setminus\mathcal{I}_{\infty}}K_{\boldsymbol{\mathbf{i}}}\right).
\end{aligned}
\]
Hence $\mathcal{H}^{t}\left(\bigcup_{\boldsymbol{\mathbf{i}}\in\mathcal{I}_{n+1}\setminus\mathcal{I}_{\infty}}K_{\boldsymbol{\mathbf{i}}}\right)\leq(1-r_{\mathrm{min}}^{t})^{n}\cdot\mathcal{H}^{t}\left(\bigcup_{\boldsymbol{\mathbf{i}}\in\mathcal{I}_{1}\setminus\mathcal{I}_{\infty}}K_{\boldsymbol{\mathbf{i}}}\right)$
for all $n\in\mathbb{N}$ and combined with (\ref{eq:Egyenlito lem 1})
we get that $\mathcal{H}^{t}\left(\bigcup_{\boldsymbol{\mathbf{i}}\in\mathcal{I}_{n+1}\bigcap\mathcal{I}_{\infty}}K_{\boldsymbol{\mathbf{i}}}\right)\geq\left(1-(1-r_{\mathrm{min}}^{t})^{n}\right)\cdot\mathcal{H}^{t}(K)$.
Thus $\mathcal{H}^{t}\left(\bigcup_{\boldsymbol{\mathbf{i}}\in\mathcal{I}_{\infty}}K_{\boldsymbol{\mathbf{i}}}\right)\geq\mathcal{H}^{t}(K)$
and so $\mathcal{H}^{t}\left(K\setminus\left(\bigcup_{\boldsymbol{\mathbf{i}}\in\mathcal{I}_{\infty}}K_{\boldsymbol{\mathbf{i}}}\right)\right)=0$.$\hfill\square$

\begin{cor}
\label{lem:Egyenlito disjoint lem}Let $\left\{ S_{i}\right\} _{i=1}^{m}$
be an SS-IFS with attractor $K$ and let $t=\dim_{H}(K)$. Assume
that $\mathcal{T}$ is a finite group and let $O\in\mathcal{T}$ be
arbitrary. Then there exists $\mathcal{I}_{\infty}\subseteq\bigcup_{k=1}^{\infty}\mathcal{I}^{k}$
such that $T_{\boldsymbol{\mathbf{i}}}=O$ for all $\boldsymbol{\mathbf{i}}\in\mathcal{I}_{\infty}$,
$K_{\boldsymbol{\mathbf{i}}}\cap K_{\boldsymbol{\mathbf{j}}}=\emptyset$
for $\boldsymbol{\mathbf{i}},\boldsymbol{\mathbf{j}}\in\mathcal{I}_{\infty}$,
$\boldsymbol{\mathbf{i}}\neq\boldsymbol{\mathbf{j}}$ and $\mathcal{H}^{t}\left(K\setminus\left(\bigcup_{\boldsymbol{\mathbf{i}}\in\mathcal{I}_{\infty}}K_{\boldsymbol{\mathbf{i}}}\right)\right)=0$.
\end{cor}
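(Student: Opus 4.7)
The plan is to deduce this corollary directly from Proposition \ref{lem:Egyenlito disjoint lem T=00003Dinfty} by choosing the tolerance $\delta$ small enough that approximate equality forces exact equality. Since $\mathcal{T}$ is a finite subgroup of $\mathbb{O}_{d}$, in particular it is closed, so $\overline{\mathcal{T}} = \mathcal{T}$ and any $O \in \mathcal{T}$ is a legitimate input for Proposition \ref{lem:Egyenlito disjoint lem T=00003Dinfty}.

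First I would set
\[
\delta_{0} = \min\bigl\{\,\|O' - O''\| : O', O'' \in \mathcal{T},\ O' \neq O''\,\bigr\},
\]
which is strictly positive because $\mathcal{T}$ is finite. Then I would apply Proposition \ref{lem:Egyenlito disjoint lem T=00003Dinfty} with this $O$ and with any $\delta$ satisfying $0 < \delta < \delta_{0}$ to obtain a set $\mathcal{I}_{\infty} \subseteq \bigcup_{k=1}^{\infty} \mathcal{I}^{k}$ such that $\|T_{\mathbf{i}} - O\| < \delta$ for every $\mathbf{i} \in \mathcal{I}_{\infty}$, the cylinders $K_{\mathbf{i}}$ for $\mathbf{i} \in \mathcal{I}_{\infty}$ are pairwise disjoint, and $\mathcal{H}^{t}\bigl(K \setminus \bigcup_{\mathbf{i} \in \mathcal{I}_{\infty}} K_{\mathbf{i}}\bigr) = 0$.

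To finish, I would observe that for any $\mathbf{i} = (i_{1}, \ldots, i_{k}) \in \mathcal{I}_{\infty}$ the composition $T_{\mathbf{i}} = T_{i_{1}} \circ \cdots \circ T_{i_{k}}$ belongs to $\mathcal{T}$, since each $T_{i_{j}}$ is a generator of the group $\mathcal{T}$. Combined with $\|T_{\mathbf{i}} - O\| < \delta < \delta_{0}$ and the definition of $\delta_{0}$, this forces $T_{\mathbf{i}} = O$ exactly. The disjointness and full-measure conclusions for $\mathcal{I}_{\infty}$ are inherited verbatim from Proposition \ref{lem:Egyenlito disjoint lem T=00003Dinfty}.

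There is no real obstacle here; the only subtlety is checking that the finite-group case genuinely falls under the hypotheses of Proposition \ref{lem:Egyenlito disjoint lem T=00003Dinfty} (namely that $O \in \overline{\mathcal{T}}$), which is immediate since a finite set in a Hausdorff space is closed. The argument is essentially a quantitative extraction: approximate equality in $\mathbb{O}_{d}$ becomes exact equality once the ambient set of possible values is discrete.
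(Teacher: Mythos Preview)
Your proof is correct and follows essentially the same approach as the paper: apply Proposition~\ref{lem:Egyenlito disjoint lem T=00003Dinfty} with $\delta$ chosen smaller than the minimum distance between distinct elements of the finite group $\mathcal{T}$, so that $\|T_{\boldsymbol{\mathbf{i}}}-O\|<\delta$ forces $T_{\boldsymbol{\mathbf{i}}}=O$. The paper takes $\delta=\min_{O'\in\mathcal{T},\,O'\neq O}\|O'-O\|$ rather than your minimum over all pairs, but this is an inessential difference.
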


\begin{proof}
Let $q=\left|\mathcal{T}\right|$, $\mathcal{T}=\left\{ O_{1},\ldots,O_{q}\right\} $,
$\mathcal{V}=\left\{ 1,\ldots,q\right\} $ and $O=O_{i}$ for some
$i\in\mathcal{V}$. Then let $\delta=\min_{j\in\mathcal{V},j\neq i}\left\Vert O_{j}-O\right\Vert >0$.
By Proposition \ref{lem:Egyenlito disjoint lem T=00003Dinfty} there
is an $\mathcal{I}_{\infty}$ such that $\left\Vert T_{\boldsymbol{\mathbf{i}}}-O\right\Vert <\delta$
and so $T_{\boldsymbol{\mathbf{i}}}=O$ for all $\boldsymbol{\mathbf{i}}\in\mathcal{I}_{\infty}$.
\end{proof}

\begin{prop}
\label{lem:Sok reszes Egyenlito disjoint lem}Let $\left\{ S_{i}\right\} _{i=1}^{m}$
be an SS-IFS with attractor $K$ and let $t=\dim_{H}(K)$. Let $O\in\overline{\mathcal{T}}$
be arbitrary and let $\boldsymbol{\mathbf{i}}_{1},\ldots,\boldsymbol{\mathbf{i}}_{n}\in\bigcup_{k=1}^{\infty}\mathcal{I}^{k}$
be such that $\bigcup_{i=1}^{n}K_{\boldsymbol{\mathbf{i}}_{i}}$ is
a disjoint union and let $\delta>0$. Then there exists $\mathcal{I}_{\infty}\subseteq\bigcup_{k=1}^{\infty}\mathcal{I}^{k}$
such that $\left\Vert T_{\boldsymbol{\mathbf{i}}}-O\right\Vert <\delta$
for all $\boldsymbol{\mathbf{i}}\in\mathcal{I}_{\infty}$, with $\bigcup_{\boldsymbol{\mathbf{i}}\in\mathcal{I}_{\infty}}\bigcup_{i=1}^{n}K_{\boldsymbol{\mathbf{i}}*\boldsymbol{\mathbf{i}}_{i}}$
a disjoint union and $\mathcal{H}^{t}\left(K\setminus\left(\bigcup_{\boldsymbol{\mathbf{i}}\in\mathcal{I}_{\infty}}\bigcup_{i=1}^{n}K_{\boldsymbol{\mathbf{i}}*\boldsymbol{\mathbf{i}}_{i}}\right)\right)=0$.
\end{prop}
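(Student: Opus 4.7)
The plan is an iterated peeling argument. If $\mathcal{H}^{t}(K) = 0$ then $\mathcal{I}_{\infty} = \emptyset$ trivially satisfies everything, so assume $\mathcal{H}^{t}(K) > 0$. Let $B = \bigcup_{l=1}^{n} K_{\boldsymbol{\mathbf{i}}_{l}}$, which is closed and, by the disjointness hypothesis together with Remark \ref{Rem: Mainlem EQ}, satisfies $\mathcal{H}^{t}(B) = \alpha \mathcal{H}^{t}(K)$ with $\alpha := \sum_{l=1}^{n} r_{\boldsymbol{\mathbf{i}}_{l}}^{t} \in (0,1]$. At each stage we aim to cover an $\alpha$-fraction of the remaining measure by new clusters of the form $S_{\boldsymbol{\mathbf{i}}}(B)$.

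The engine is the following adaptation of Proposition \ref{lem:Egyenlito disjoint lem T=00003Dinfty}: for every $O' \in \overline{\mathcal{T}}$ (and $\overline{\mathcal{T}}$ is a closed subgroup of $\mathbb{O}_{d}$, since the closure of a subgroup in a topological group is a subgroup) there is $\mathcal{K} \subseteq \bigcup_{k} \mathcal{I}^{k}$ of pairwise disjoint cylinders with $K_{\boldsymbol{\mathbf{k}}} \cap B = \emptyset$, $\|T_{\boldsymbol{\mathbf{k}}} - O'\| < \delta$, and $\mathcal{H}^{t}\bigl((K \setminus B) \setminus \bigcup_{\boldsymbol{\mathbf{k}} \in \mathcal{K}} K_{\boldsymbol{\mathbf{k}}}\bigr) = 0$. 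This is obtained by running the proof of Proposition \ref{lem:Egyenlito disjoint lem T=00003Dinfty} verbatim, starting instead from an initial Vitali collection of cylinders $K_{\boldsymbol{\mathbf{k}}} \subseteq K \setminus B$; because $B$ is closed and, by Remark \ref{rem: disjointing rem}, arbitrarily small cylinders surround every point of $K$, such a collection is a Vitali cover of $K \setminus B$. Both the good-extension step $\boldsymbol{\mathbf{k}} \mapsto \boldsymbol{\mathbf{k}}*\boldsymbol{\mathbf{j}}_{i}$ and the subsequent Vitali refinement of leftovers keep every new cylinder inside $K_{\boldsymbol{\mathbf{k}}} \subseteq K \setminus B$, and the geometric decay of the bad measure is unchanged.

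Now construct $\mathcal{I}_{\infty}$ in layers. Let $\mathcal{I}^{(0)}$ be the output of Proposition \ref{lem:Egyenlito disjoint lem T=00003Dinfty} with target $O$. Given $\mathcal{I}^{(n)}$, for each $\boldsymbol{\mathbf{j}} \in \mathcal{I}^{(n)}$ apply the auxiliary statement with target $T_{\boldsymbol{\mathbf{j}}}^{-1} \circ O \in \overline{\mathcal{T}}$ to obtain $\mathcal{K}_{\boldsymbol{\mathbf{j}}}$, and set $\mathcal{I}^{(n+1)} := \{\boldsymbol{\mathbf{j}}*\boldsymbol{\mathbf{k}} : \boldsymbol{\mathbf{j}} \in \mathcal{I}^{(n)},\ \boldsymbol{\mathbf{k}} \in \mathcal{K}_{\boldsymbol{\mathbf{j}}}\}$; finally $\mathcal{I}_{\infty} := \bigcup_{n \geq 0} \mathcal{I}^{(n)}$. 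Since $T_{\boldsymbol{\mathbf{j}}}$ is an isometry, $\|T_{\boldsymbol{\mathbf{j}}*\boldsymbol{\mathbf{k}}} - O\| = \|T_{\boldsymbol{\mathbf{k}}} - T_{\boldsymbol{\mathbf{j}}}^{-1} \circ O\| < \delta$, so by induction $\|T_{\boldsymbol{\mathbf{i}}} - O\| < \delta$ for every $\boldsymbol{\mathbf{i}} \in \mathcal{I}_{\infty}$.

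For cluster disjointness, the constraint $K_{\boldsymbol{\mathbf{k}}} \cap B = \emptyset$ in $\mathcal{K}_{\boldsymbol{\mathbf{j}}}$ translates to $K_{\boldsymbol{\mathbf{j}}*\boldsymbol{\mathbf{k}}} \cap S_{\boldsymbol{\mathbf{j}}}(B) = \emptyset$, so each layer-$(n{+}1)$ cluster avoids its parent cluster and, inductively, every ancestor; distinct $\boldsymbol{\mathbf{j}}$'s in one layer have disjoint parent cylinders $K_{\boldsymbol{\mathbf{j}}}$, giving disjointness of all their descendant clusters, while within one $\mathcal{K}_{\boldsymbol{\mathbf{j}}}$ the cylinders $K_{\boldsymbol{\mathbf{j}}*\boldsymbol{\mathbf{k}}}$ are pairwise disjoint by construction. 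For coverage, Remark \ref{Rem: Mainlem EQ} applied to $\mathcal{I}^{(0)}$ in $K$ and to each $\mathcal{K}_{\boldsymbol{\mathbf{j}}}$ in $K\setminus B$ gives $\sum_{\boldsymbol{\mathbf{j}} \in \mathcal{I}^{(0)}} r_{\boldsymbol{\mathbf{j}}}^{t} = 1$ and $\sum_{\boldsymbol{\mathbf{k}} \in \mathcal{K}_{\boldsymbol{\mathbf{j}}}} r_{\boldsymbol{\mathbf{k}}}^{t} = 1 - \alpha$, so the layer-$n$ clusters have total measure $\alpha(1-\alpha)^{n} \mathcal{H}^{t}(K)$, summing to $\mathcal{H}^{t}(K)$. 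The main obstacle is precisely the auxiliary statement — confirming that the Vitali-based iteration of Proposition \ref{lem:Egyenlito disjoint lem T=00003Dinfty} can be confined to $K \setminus B$ throughout — which hinges on the closedness of $B$ and on the containment $K_{\boldsymbol{\mathbf{k}}*\boldsymbol{\mathbf{j}}_{i}} \subseteq K_{\boldsymbol{\mathbf{k}}}$ being preserved at every refinement step.
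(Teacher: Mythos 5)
Your proof is correct and follows essentially the same strategy as the paper's (which is only given there as a one-sentence sketch): iterate the Vitali-covering machinery of Proposition \ref{lem:Egyenlito disjoint lem T=00003Dinfty}, keeping the cluster $S_{\boldsymbol{\mathbf{i}}}(B)$ inside each good cylinder and re-covering the leftover $K_{\boldsymbol{\mathbf{i}}}\setminus S_{\boldsymbol{\mathbf{i}}}(B)$ by cylinders avoiding the cluster. Your layered organization via an auxiliary version of Proposition \ref{lem:Egyenlito disjoint lem T=00003Dinfty} restricted to $K\setminus B$ is a clean way to make that sketch precise, and the key checks (closedness of $B$ so that cylinders avoiding $B$ form a Vitali cover of $K\setminus B$, the identity $\Vert T_{\boldsymbol{\mathbf{j}}*\boldsymbol{\mathbf{k}}}-O\Vert=\Vert T_{\boldsymbol{\mathbf{k}}}-T_{\boldsymbol{\mathbf{j}}}^{-1}\circ O\Vert$, and the geometric series $\sum_{n}\alpha(1-\alpha)^{n}=1$) are all in place.
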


The proof of Proposition \ref{lem:Sok reszes Egyenlito disjoint lem}
is similar to the proof of Proposition \ref{lem:Egyenlito disjoint lem T=00003Dinfty}
with the difference that if we have $\boldsymbol{\mathbf{i}}\in\mathcal{I}_{n}$
at a level such that $\left\Vert T_{\boldsymbol{\mathbf{i}}}-O\right\Vert <\delta$
then we keep the pieces $K_{\boldsymbol{\mathbf{i}}*\boldsymbol{\mathbf{i}}_{i}}$
from the next level on and again cover the rest of $K_{\boldsymbol{\mathbf{i}}}$
on the next level.

\section{Iterated function systems with finite transformation groups\label{sec:Iterated-function-sytems}}

In this section we deal with the case when $\mathcal{T}$ is finite.
First, using a natural construction of a GD-IFS we verify Theorem
\ref{cor: SS Lin im is GDA}. Then we prove Theorem \ref{thm:dimension drop SS}.\\

\noindent \textit{Proof of Theorem \ref{cor: SS Lin im is GDA}.}
We need to construct a directed graph $G\left(\mathcal{V},\mathcal{E}\right)$
and a GD-IFS $\left\{ S_{e}:e\in\mathcal{E}\right\} $ that satisfies
the theorem. Let $\mathcal{V}$ be the set $\left\{ 1,2,\ldots,q\right\} $.
For $i,j\in\mathcal{V}$ and for $n\in\mathcal{I}$ we draw a directed
edge $e_{i,j}^{n}$ from $i$ to $j$ if $O_{i}\circ T_{n}=O_{j}$.
Then let $\mathcal{E}=\left\{ e_{i,j}^{n}:i,j\in\mathcal{V},n\in\mathcal{I},O_{i}\circ T_{n}=O_{j}\right\} $.

For $i,j\in\mathcal{V}$ and $n\in\mathcal{I}$ such that $O_{i}\circ T_{n}=O_{j}$,
i.e. $e_{i,j}^{n}=e\in\mathcal{E}$, we write $v_{e}=v_{e_{i,j}^{n}}=L\circ O_{i}(v_{n})$,
$r_{e}=r_{e_{i,j}^{n}}=r_{n}$ and let $S_{e}:\mathbb{R}^{d_{2}}\longrightarrow\mathbb{R}^{d_{2}}$
be the map
\begin{equation}
S_{e}(x)=S_{e_{i,j}^{n}}(x)=r_{e}\cdot x+v_{e}\label{eq:!!!S_e def-homothecy}
\end{equation}
Let $\left\{ S_{e}:e\in\mathcal{E}\right\} $ be the GD-IFS on the
graph $G\left(\mathcal{V},\mathcal{E}\right)$. Since $K=\bigcup_{n=1}^{m}S_{n}(K)$,
for $i\in\mathcal{V}$,
\[
\begin{aligned}L\left(O_{i}(K)\right) & =\bigcup_{n=1}^{m}L\circ O_{i}\circ S_{n}(K)=\bigcup_{n=1}^{m}r_{n}\cdot L\circ O_{i}\circ T_{n}(K)+L\circ O_{i}(v_{n})\\
 & =\bigcup_{n=1}^{m}\bigcup_{j\in\mathcal{V},O_{i}\circ T_{n}=O_{j}}r_{n}\cdot L\circ O_{j}(K)+L\circ O_{i}(v_{n})=\bigcup_{j=1}^{q}\bigcup_{e\in\mathcal{E}_{i,j}}S_{e}\left(L\circ O_{j}(K)\right)
\end{aligned}
\]
and this shows that the $q$-tuple $\left(L\circ O_{1}(K),\ldots,L\circ O_{q}(K)\right)$
is the attractor of $\left\{ S_{e}:e\in\mathcal{E}\right\} $.

Let us show that the the graph $G\left(\mathcal{V},\mathcal{E}\right)$
is strongly connected. Let $i,j\in\mathcal{V}$ be arbitrary. Then
$O_{i}^{-1}\circ O_{j}\in\mathcal{T}$ and since $\mathcal{T}$ is
generated by the transformations $\left\{ T_{i}\right\} _{i=1}^{m}$
and each $T_{i}$ has finite order there exists $\boldsymbol{\mathbf{i}}=(i_{1},\ldots,i_{k})\in\mathcal{I}^{k}$
such that $T_{\boldsymbol{\mathbf{i}}}=T_{i_{1}}\circ\ldots\circ T_{i_{k}}=O_{i}^{-1}\circ O_{j}$.
Let $\boldsymbol{\mathbf{j}}=(j_{1},\ldots,j_{k},j_{k+1})\in\mathcal{V}^{k+1}$
such that $j_{1}=i$ and $O_{j_{n}}\circ T_{i_{n}}=O_{j_{n+1}}$ for
$1\leq n\leq k$. This shows that there exists a $k$ step long directed
path from $i$ to $j$, that visits vertices $i=j_{1},\ldots,j_{k},j_{k+1}=j$
in order. So the graph $G\left(\mathcal{V},\mathcal{E}\right)$ is
strongly connected.

Let $u=(1,\ldots,1)^{\intercal}\in\mathbb{R}^{q}$ be the vector with
each coordinate $1$. For the GD-IFS $\left\{ S_{e}:e\in\mathcal{E}\right\} $
the matrix $A^{(s)}$ is defined as in (\ref{eq:A^s def}) $A_{i,j}^{(s)}=\sum_{e\in\mathcal{E}_{i,j}}r_{e}^{s}$,
hence the $i$th coordinate of the vector $A^{(s)}u$ is
\[
\left(A^{(s)}u\right)_{i}=\sum_{j=1}^{q}A_{i,j}^{(s)}=\sum_{j=1}^{q}\sum_{e\in\mathcal{E}_{i,j}}r_{e}^{s}=\sum_{j=1}^{q}\sum_{n=1}^{m}\sum_{e_{i,j}^{n}\in\mathcal{E}_{i,j}}r_{e_{i,j}^{n}}^{s}=\sum_{n=1}^{m}r_{n}^{s}=1
\]
using that $s$ is the similarity dimension of the SS-IFS $\left\{ S_{i}\right\} _{i=1}^{m}$.
So $u$ is a non-negative, non-zero eigenvector of the irreducible
matrix $A^{(s)}$ with eigenvalue $1$. Thus $\rho(A^{(s)})=1$ by
Corollary \ref{cor:RO(A)=00003D1 Cor} and hence the similarity dimension
of the GD-IFS $\left\{ S_{e}:e\in\mathcal{E}\right\} $ is $s$.

Let $y_{i}=\mathcal{H}^{s}\left(L\circ O_{i}(K)\right)$ and $y^{\intercal}=(y_{1},\ldots,y_{q})$.
If the GD-IFS $\left\{ S_{e}:e\in\mathcal{E}\right\} $ does not satisfy
the OSC then $\mathcal{H}^{s}\left(L\circ O_{1}(K)\right)=\ldots=\mathcal{H}^{s}\left(L\circ O_{q}(K)\right)=0$
by Proposition \ref{prop: OSC GDA Wang}. If the GD-IFS $\left\{ S_{e}:e\in\mathcal{E}\right\} $
satisfies the OSC then $0<\mathcal{H}^{s}(L\circ O_{i}(K))<\infty$
for each $i\in\mathcal{V}$ by Proposition \ref{prop: OSC GDA Wang},
hence $y\in\mathbb{R}^{q}$, $y>0$ and by Remark \ref{Rem: A^s(y)=00003Dy where y=00003DH^s(K_i)}
$A^{(s)}y=y$. So $y$ is a positive scalar multiple of $u$ by Theorem
\ref{thm:Perron-Frobenius-Theorem} \textit{(iv)}. So $y=\mathcal{H}^{s}\left(L\circ O_{i}(K)\right)\cdot u$
for each $i\in\mathcal{V}$ and hence $\mathcal{H}^{s}\left(L\circ O_{1}(K)\right)=\ldots=\mathcal{H}^{s}\left(L\circ O_{q}(K)\right)$.$\hfill\square$\\
\\
Theorem \ref{thm:dimension drop SS} states that we can always find
a projection such that the dimension drops under the image of the
projection. We show this by finding a projection where exact overlapping
occurs.\\

\noindent \textit{Proof of Theorem \ref{thm:dimension drop SS}.}
We can assume that $l=d-1$ because if $M\in G_{d,d-1}$ such that
$\dim_{H}\left(\Pi_{M}(K)\right)<s$ and $N$ is a subspace contained
in $M$ then $\Pi_{N}=\Pi_{N}\circ\Pi_{M}$ hence $\dim_{H}\left(\Pi_{N}(K)\right)<s$.
Let $\mathcal{T}=\left\{ O_{1},\ldots,O_{q}\right\} $ where $q=\left|\mathcal{T}\right|$
and let $\mathcal{V}=\left\{ 1,2,\ldots,q\right\} $. We may assume
that $T_{1}=T_{2}=Id_{\mathbb{R}^{d}}$ because if we iterate the
IFS $q$ times then we obtain the SS-IFS $\left\{ S_{\boldsymbol{\mathbf{i}}}:\boldsymbol{\mathbf{i}}\in\mathcal{I}^{q}\right\} $.
The similarity dimension of this new SS-IFS is $s$, the attractor
of it is $K$ and the transformation group of it is a subgroup of
$\mathcal{T}$, hence is finite. Since $q$ is the order of $\mathcal{T}$
it follows that $T_{1}^{q}=T_{2}^{q}=Id_{\mathbb{R}^{d}}$. So taking
the new IFS after relabeling we have that $T_{1}=T_{2}=Id_{\mathbb{R}^{d}}$.
We can further assume that $r_{1}=r_{2}$ because if we iterate the
IFS, we obtain the SS-IFS $\left\{ S_{\boldsymbol{\mathbf{i}}}:\boldsymbol{\mathbf{i}}\in\mathcal{I}^{2}\right\} $
and again the similarity dimension, the attractor and the finiteness
of the transformation group do not change. Then $r_{1}\cdot r_{2}=r_{2}\cdot r_{1}$,
$T_{1}\circ T_{2}=Id_{\mathbb{R}^{d}}\circ Id_{\mathbb{R}^{d}}=T_{2}\circ T_{1}$.
So taking the new IFS after relabeling we have that $T_{1}=T_{2}=Id_{\mathbb{R}^{d}}$
and $r_{1}=r_{2}$.

So $K_{1}=S_{1}(K)$ is a translate of $K_{2}=S_{2}(K)$. Let $v$
be the translation vector such that $K_{1}=K_{2}+v$. Let $M$ be
the orthogonal direct complement of $v$ (if $v=0$ then $M\in G_{d,d-1}$
can be arbitrary). Then $\Pi_{M}(K_{1})=\Pi_{M}(K_{2})$. Let $L=\Pi_{M}:\mathbb{R}^{d}\longrightarrow\mathbb{R}^{d-1}$.
Then let $G\left(\mathcal{V},\mathcal{E}\right)$ be the graph, $\left\{ S_{e}:e\in\mathcal{E}\right\} $
be the GD-IFS that is constructed in the proof of Theorem \ref{cor: SS Lin im is GDA}
and for $i,j\in\mathcal{V}$ and for $n\in\mathcal{I}$, such that
$O_{i}\circ T_{n}=O_{j}$, let $e_{i,j}^{n}$ as in the proof. Let
$i\in\mathcal{V}$ such that $O_{i}=Id_{\mathbb{R}^{d}}$. Then $e_{i,i}^{1}$
and $e_{i,i}^{2}$ are loops in $G\left(\mathcal{V},\mathcal{E}\right)$
and
\[
\begin{aligned}S_{e_{i,i}^{1}}(\Pi_{M}(K)) & =r_{1}\cdot\Pi_{M}(K)+\Pi_{M}(v_{1})=r_{1}\cdot\Pi_{M}(K)+\Pi_{M}(v_{1}-v)\\
 & =r_{2}\cdot\Pi_{M}(K)+\Pi_{M}(v_{2})=S_{e_{i,i}^{2}}(\Pi_{M}(K)).
\end{aligned}
\]
So if we take $\left\{ S_{e}:e\in\mathcal{E}\setminus\left\{ e_{0}\right\} \right\} $
with $e_{0}=e_{i,i}^{2}$ then $\left\{ S_{e}:e\in\mathcal{E}\setminus\left\{ e_{0}\right\} \right\} $
is a strongly connected GD-IFS with attractor $\left(\Pi_{M}\circ O_{1}(K),\ldots,\Pi_{M}\circ O_{q}(K)\right)$.
So by Lemma \ref{lem:simdim drop by eliminating function} the similarity
dimension of $\left\{ S_{e}:e\in\mathcal{E}\setminus\left\{ e_{0}\right\} \right\} $
is strictly smaller than $s$. Hence $\dim_{H}\left(\Pi_{M}(K)\right)<s$
by Lemma \ref{prop:finite measure GDA}.$\hfill\square$

\section{Hausdorff measure of the orbits\label{sec:Hausdorff-measure-of}}

In this section we deal with the general results when we have no restriction
on $\mathcal{T}$ and our main aim is to prove Theorem \ref{thm: Egynlo lin Im}.
At the end of this section we conclude Corollary \ref{cor: SS almost disjoint A B}
from Theorem \ref{thm: Egynlo lin Im}.

First we prove Proposition \ref{lem:Lin neighbourhood lem} that says
the Hausdorff measure of linear images of $K$ is upper semi-continuous
in the linear maps. This observation is essential in the proof of
Theorem \ref{thm: Egynlo lin Im}.\\

\begin{prop}
\label{prop:upersemicont}Let $\left\{ S_{i}\right\} _{i=1}^{m}$
be an SS-IFS with attractor $K\subseteq\mathbb{R}^{d}$, let $t=\dim_{H}(K)$
and $L:\mathbb{R}^{d}\longrightarrow\mathbb{R}^{d_{2}}$ be a linear
map. Then for every $\varepsilon>0$ there exists $\delta>0$ such
that for every linear map $L_{2}:\mathbb{R}^{d}\longrightarrow\mathbb{R}^{d_{2}}$
with $\left\Vert L-L_{2}\right\Vert <\delta$ we have that $\mathcal{H}^{t}(L_{2}(K))\leq\mathcal{H}_{\infty}^{t}(L(K))+\varepsilon$.\end{prop}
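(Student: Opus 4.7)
The plan is to dominate $\mathcal{H}^{t}(L_{2}(K))$ by a single rescaled copy of a near-optimal open cover of $L(K)$, exploiting the fact that after passing to cylinders whose orthogonal parts are close to the identity, the maps $L_{2}\circ T_{\mathbf{i}}$ agree with $L$ on $K$ up to arbitrarily small error. The cases $\mathcal{H}^{t}(K)=0$ (which forces $\mathcal{H}^{t}(L_{2}(K))=0$ since $L_{2}$ is Lipschitz) and $\mathcal{H}_{\infty}^{t}(L(K))=\infty$ (vacuous) are immediate, so I henceforth assume $0<\mathcal{H}^{t}(K)<\infty$, using Proposition~\ref{prop:.implicitkov}, and $\mathcal{H}_{\infty}^{t}(L(K))<\infty$.

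Given $\varepsilon>0$, since $L(K)$ is compact I would first fix a finite open cover $\{B_{n}\}_{n=1}^{N}$ of $L(K)$ with $\sum_{n}\mathrm{diam}(B_{n})^{t}<\mathcal{H}_{\infty}^{t}(L(K))+\varepsilon/4$, and then inflate each $B_{n}$ to its open $\rho$-neighbourhood $B_{n}^{+}$ so that, by continuity of each summand in the finite sum, $\sum_{n}\mathrm{diam}(B_{n}^{+})^{t}<\mathcal{H}_{\infty}^{t}(L(K))+\varepsilon/2$. Write $D=\max_{n}\mathrm{diam}(B_{n}^{+})$ and $R=\sup_{x\in K}\|x\|$. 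Choosing $\delta'>0$ and then $\delta>0$ small enough that $(\|L\|+1)\delta' R+\delta R<\rho$, the triangle inequality yields $\|L_{2}Tx-Lx\|\leq\|L_{2}\|\,\|Tx-x\|+\|L-L_{2}\|\,\|x\|<\rho$ for every $x\in K$, every $T\in\mathbb{O}_{d}$ with $\|T-\mathrm{Id}_{\mathbb{R}^{d}}\|<\delta'$, and every $L_{2}$ with $\|L-L_{2}\|<\delta$. In particular $(L_{2}\circ T)(K)\subseteq\bigcup_{n=1}^{N}B_{n}^{+}$ for any such $L_{2}$ and $T$.

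With $\delta$ thus fixed, for each prescribed $\tau>0$ I would apply Proposition~\ref{lem:Egyenlito disjoint lem T=00003Dinfty} with $O=\mathrm{Id}_{\mathbb{R}^{d}}$ and the $\delta'$ above, starting the construction in its proof from a Vitali collection in Lemma~\ref{lem:disjointing lem} consisting of cylinders of sufficiently small diameter (as in Remark~\ref{rem: disjointing rem}), to produce $\mathcal{I}_{\infty}$ with pairwise disjoint $K_{\mathbf{i}}$, $\mathcal{H}^{t}\bigl(K\setminus\bigcup_{\mathbf{i}}K_{\mathbf{i}}\bigr)=0$, $\|T_{\mathbf{i}}-\mathrm{Id}_{\mathbb{R}^{d}}\|<\delta'$ for every $\mathbf{i}\in\mathcal{I}_{\infty}$, and the additional uniform diameter control $\max_{\mathbf{i}\in\mathcal{I}_{\infty}}r_{\mathbf{i}}\leq\tau/D$. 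Since $K$ is a $t$-set, Remark~\ref{Rem: Mainlem EQ} gives $\sum_{\mathbf{i}\in\mathcal{I}_{\infty}}r_{\mathbf{i}}^{t}=1$. The decomposition $L_{2}(K_{\mathbf{i}})=r_{\mathbf{i}}(L_{2}\circ T_{\mathbf{i}})(K)+L_{2}(v_{\mathbf{i}})\subseteq\bigcup_{n}\bigl(r_{\mathbf{i}}B_{n}^{+}+L_{2}(v_{\mathbf{i}})\bigr)$ then covers $\bigcup_{\mathbf{i}}L_{2}(K_{\mathbf{i}})$ by sets of diameter at most $r_{\mathbf{i}}D\leq\tau$, while the residual null set $L_{2}\bigl(K\setminus\bigcup_{\mathbf{i}}K_{\mathbf{i}}\bigr)$ contributes zero to $\mathcal{H}_{\tau}^{t}$, since $\mathcal{H}_{\tau}^{t}\leq\mathcal{H}^{t}$ and $L_{2}$ is Lipschitz. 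Hence
\begin{equation*}
\mathcal{H}_{\tau}^{t}(L_{2}(K))\leq\Bigl(\sum_{\mathbf{i}\in\mathcal{I}_{\infty}}r_{\mathbf{i}}^{t}\Bigr)\Bigl(\sum_{n=1}^{N}\mathrm{diam}(B_{n}^{+})^{t}\Bigr)<\mathcal{H}_{\infty}^{t}(L(K))+\varepsilon/2,
\end{equation*}
and letting $\tau\to 0$ (re-choosing $\mathcal{I}_{\infty}$ for each $\tau$, while keeping $\delta,\delta',\rho$ and $\{B_{n}^{+}\}$ fixed) yields $\mathcal{H}^{t}(L_{2}(K))\leq\mathcal{H}_{\infty}^{t}(L(K))+\varepsilon/2<\mathcal{H}_{\infty}^{t}(L(K))+\varepsilon$.

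The main obstacle is that $L_{2}\circ T_{\mathbf{i}}$ need not be close to $L$ unless the orthogonal part $T_{\mathbf{i}}$ itself is close to $\mathrm{Id}_{\mathbb{R}^{d}}$; this is exactly why one must appeal to Proposition~\ref{lem:Egyenlito disjoint lem T=00003Dinfty} with $O=\mathrm{Id}_{\mathbb{R}^{d}}$ rather than to a bare iteration $K=\bigcup_{\mathbf{i}\in\mathcal{I}^{k}}K_{\mathbf{i}}$. The identity $\sum_{\mathbf{i}}r_{\mathbf{i}}^{t}=1$ from Remark~\ref{Rem: Mainlem EQ} is the second essential ingredient, collapsing the many cylindrical copies of the fixed cover of $L(K)$ into a single copy at the level of the estimate.
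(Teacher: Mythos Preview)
Your argument is correct and follows the same strategy as the paper: fix a near-optimal finite cover of $L(K)$, enlarge it slightly so that $(L_{2}\circ T)(K)$ is still covered whenever $\|L-L_{2}\|$ and $\|T-\mathrm{Id}\|$ are small, invoke Proposition~\ref{lem:Egyenlito disjoint lem T=00003Dinfty} with $O=\mathrm{Id}_{\mathbb{R}^{d}}$ to obtain disjoint cylinders with $T_{\mathbf{i}}$ near the identity, and then use $\sum_{\mathbf{i}}r_{\mathbf{i}}^{t}=1$ from Remark~\ref{Rem: Mainlem EQ} to collapse the rescaled covers. The only cosmetic differences are that the paper enlarges by pulling back through $L^{-1}(C)\cap B(0,R)$ and bounding $\mathrm{diam}\bigl(L_{2}\circ T(\,\cdot\,)\bigr)$, whereas you take $\rho$-neighbourhoods directly in the target, and that the paper obtains the small-diameter control $r_{\mathbf{i}}\leq\tau/D$ by applying the proposition to the iterated SS-IFS $\{S_{\mathbf{i}}:\mathbf{i}\in\mathcal{I}^{k}\}$ for large $k$, while you achieve it by seeding the construction in the proof of the proposition with a deep Vitali collection as in Remark~\ref{rem: disjointing rem}; both devices are equivalent here.
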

\begin{proof}
\noindent It is enough to verify the proposition for $0<\varepsilon<1$.
We may assume that $\mathcal{H}^{t}(K)>0$ otherwise $\mathcal{H}^{t}(L_{2}(K))=0$.
By Proposition \ref{prop:.implicitkov} $\mathcal{H}^{t}(K)<\infty$
hence $K$ is a $t$-set. Since $K$ is compact there exists $R>0$
such that $K$ is contained in $B(0,R)$. Since $L(K)$ is compact
it follows that $\mathcal{H}_{\infty}^{t}(L(K))<\infty$. Thus there
exists a finite open cover $\mathcal{C}$ of $L(K)$ such that
\begin{equation}
\sum_{C\in\mathcal{C}}\mathrm{diam}(C)^{t}\leq\mathcal{H}_{\infty}^{t}(L(K))+\varepsilon\label{eq:Grass neig lem eq0}
\end{equation}
and $0<\mathrm{diam}(C)<\infty$. Let $d_{\mathrm{max}}=\max\left\{ \mathrm{diam}(C):C\in\mathcal{C}\right\} $.
Because $\mathcal{C}$ is a cover of $L(K)$ and $K\subseteq B(0,R)$
it follows that $K\subseteq\bigcup_{C\in\mathcal{C}}L^{-1}(C)\cap B(0,R)$
and $L^{-1}(C)\cap B(0,R)$ is a bounded set such that $\mathrm{diam}\left(L\left(L^{-1}(C)\cap B(0,R)\right)\right)=\mathrm{diam}(C)>0$
for each $C\in\mathcal{C}$. Hence for each $C\in\mathcal{C}$ we
can find $\delta_{C}>0$ such that if $L_{2}:\mathbb{R}^{d}\longrightarrow\mathbb{R}^{d_{2}}$
is a linear map with $\left\Vert L-L_{2}\right\Vert <\delta_{C}$
then $\mathrm{diam}\left(L_{2}\left(L^{-1}(C)\cap B(0,R)\right)\right)\leq\mathrm{diam}(C)\cdot(1+\varepsilon)$.
Let $\delta=\min\left\{ \delta_{C}:C\in\mathcal{C}\right\} /\left\Vert L\right\Vert +1>0$.
So if $\left\Vert L-L_{2}\right\Vert <\delta$ for some linear map
$L_{2}:\mathbb{R}^{d}\longrightarrow\mathbb{R}^{d_{2}}$ and $\left\Vert T-Id_{\mathbb{R}^{d}}\right\Vert <\delta$
for some $T\in\mathbb{O}_{d}$ then

\[
\left\Vert L-L_{2}\circ T\right\Vert =\left\Vert L-L\circ T+L\circ T-L_{2}\circ T\right\Vert <\delta(\left\Vert L\right\Vert +1)=\min\left\{ \delta_{C}:C\in\mathcal{C}\right\} .
\]
Hence
\begin{equation}
\mathrm{diam}\left(L_{2}\circ T\left(L^{-1}(C)\cap B(0,R)\right)\right)\leq\mathrm{diam}(C)\cdot(1+\varepsilon)<2d_{\mathrm{max}}.\label{eq:Lin neigbourhood lem EQ1}
\end{equation}

The lemma will follow if we show that $\mathcal{H}_{\eta}^{t}(L_{2}(K))\leq(1+\varepsilon)^{t}\cdot\left(\mathcal{H}_{\infty}^{t}(L(K))+\varepsilon\right)$
for every $\eta>0$ and linear map $L_{2}:\mathbb{R}^{d}\longrightarrow\mathbb{R}^{d_{2}}$
with $\left\Vert L-L_{2}\right\Vert <\delta$, where $\mathcal{H}_{\eta}^{t}$
denotes Hausdorff pre-measure, used to define Hausdorff measure. Let
$\eta>0$ be fixed, $r_{\mathrm{max}}=\max\left\{ r_{i}:i\in\mathcal{I}\right\} $,
let $k$ be a positive integer such that $r_{\mathrm{max}}^{k}\cdot2d_{\mathrm{max}}<\eta$
and $L_{2}:\mathbb{R}^{d}\longrightarrow\mathbb{R}^{d_{2}}$ be a
linear map with $\left\Vert L-L_{2}\right\Vert <\delta$. Then $K$
is the attractor of the SS-IFS $\left\{ S_{\boldsymbol{\mathbf{i}}}:\boldsymbol{\mathbf{i}}\in\mathcal{I}^{k}\right\} $.
We apply Lemma \ref{lem:Egyenlito disjoint lem T=00003Dinfty} to
the SS-IFS $\left\{ S_{\boldsymbol{\mathbf{i}}}:\boldsymbol{\mathbf{i}}\in\mathcal{I}^{k}\right\} $
with $O=Id_{\mathbb{R}^{d}}$ and $\delta>0$ to obtain $\mathcal{I}_{\infty}\subseteq\bigcup_{k_{2}=1}^{\infty}\mathcal{I}^{k\cdot k_{2}}$
such that $\left\Vert T_{\boldsymbol{\mathbf{i}}}-Id_{\mathbb{R}^{d}}\right\Vert <\delta$
for all $\boldsymbol{\mathbf{i}}\in\mathcal{I}_{\infty}$, $K_{\boldsymbol{\mathbf{i}}}\cap K_{\boldsymbol{\mathbf{j}}}=\emptyset$
for $\boldsymbol{\mathbf{i}},\boldsymbol{\mathbf{j}}\in\mathcal{I}_{\infty}$,
$\boldsymbol{\mathbf{i}}\neq\boldsymbol{\mathbf{j}}$, and $\mathcal{H}^{t}\left(K\setminus\left(\bigcup_{\boldsymbol{\mathbf{i}}\in\mathcal{I}_{\infty}}K_{\boldsymbol{\mathbf{i}}}\right)\right)=0$.
As $K$ is a $t$-set we have by Remark \ref{Rem: Mainlem EQ} that
\begin{equation}
\sum_{\boldsymbol{\mathbf{i}}\in\mathcal{I}_{\infty}}r_{\boldsymbol{\mathbf{i}}}^{t}=1.\label{eq: Grass neigh lem eq1}
\end{equation}
Since $\left\Vert T_{\boldsymbol{\mathbf{i}}}-Id_{\mathbb{R}^{d}}\right\Vert <\delta$
for $\boldsymbol{\mathbf{i}}\in\mathcal{I}_{\infty}$ and $\left\Vert L-L_{2}\right\Vert <\delta$,
it follows from (\ref{eq:Lin neigbourhood lem EQ1}) that
\begin{equation}
\begin{aligned}\mathrm{diam}\left(L_{2}\left(S_{\boldsymbol{\mathbf{i}}}\left(L^{-1}(C)\cap B(0,R)\right)\right)\right) & \leq r_{\boldsymbol{\mathbf{i}}}\cdot\mathrm{diam}\left(L_{2}\left(T_{\boldsymbol{\mathbf{i}}}\left(L^{-1}(C)\cap B(0,R)\right)\right)\right)\\
 & \leq r_{\boldsymbol{\mathbf{i}}}\cdot\mathrm{diam}\left(C\right)\cdot(1+\varepsilon)<r_{\boldsymbol{\mathbf{i}}}\cdot2d_{\mathrm{max}}\leq r_{\mathrm{max}}^{k}\cdot2d_{\mathrm{max}}<\eta.
\end{aligned}
\label{eq:Grass neigh lem eq2}
\end{equation}
Thus $\left\{ L_{2}\left(S_{\boldsymbol{\mathbf{i}}}\left(L^{-1}(C)\cap B(0,R)\right)\right):C\in\mathcal{C},\boldsymbol{\mathbf{i}}\in\mathcal{I}_{\infty}\right\} $
is an $\eta$-cover of $L_{2}\left(\bigcup_{\boldsymbol{\mathbf{i}}\in\mathcal{I}_{\infty}}K_{\boldsymbol{\mathbf{i}}}\right)$
and
\begin{equation}
\begin{aligned}\mathcal{H}_{\eta}^{t}\left(L_{2}\left(\bigcup_{\boldsymbol{\mathbf{i}}\in\mathcal{I}_{\infty}}K_{\boldsymbol{\mathbf{i}}}\right)\right) & \leq\sum_{C\in\mathcal{C}}\sum_{\boldsymbol{\mathbf{i}}\in\mathcal{I}_{\infty}}\mathrm{diam}\left(L_{2}\left(S_{\boldsymbol{\mathbf{i}}}\left(L^{-1}(C)\cap B(0,R)\right)\right)\right)^{t}\\
 & \leq\sum_{C\in\mathcal{C}}\sum_{\boldsymbol{\mathbf{i}}\in\mathcal{I}_{\infty}}\left(r_{\boldsymbol{\mathbf{i}}}\cdot\mathrm{diam}\left(C\right)\cdot(1+\varepsilon)\right)^{t}\\
 & \leq(1+\varepsilon)^{t}\sum_{C\in\mathcal{C}}\mathrm{diam}\left(C\right)^{t}\sum_{\boldsymbol{\mathbf{i}}\in\mathcal{I}_{\infty}}r_{\boldsymbol{\mathbf{i}}}^{t}\leq(1+\varepsilon)^{t}\sum_{C\in\mathcal{C}}\mathrm{diam}\left(C\right)^{t}\\
 & \leq(1+\varepsilon)^{t}\cdot\left(\mathcal{H}_{\infty}^{t}(L(K))+\varepsilon\right)
\end{aligned}
\label{eq:Grass neigh lem eq3}
\end{equation}
where we used (\ref{eq:Grass neig lem eq0}), (\ref{eq: Grass neigh lem eq1})
and (\ref{eq:Grass neigh lem eq2}).

Since $\mathcal{H}^{t}\left(K\setminus\left(\bigcup_{\boldsymbol{\mathbf{i}}\in\mathcal{I}_{\infty}}K_{\boldsymbol{\mathbf{i}}}\right)\right)=0$
it follows  $\mathcal{H}_{\eta}^{t}\left(L_{2}\left(K\setminus\left(\bigcup_{\boldsymbol{\mathbf{i}}\in\mathcal{I}_{\infty}}K_{\boldsymbol{\mathbf{i}}}\right)\right)\right)=0$.
Thus by (\ref{eq:Grass neigh lem eq3})
\[
\mathcal{H}_{\eta}^{t}(L_{2}(K))\leq(1+\varepsilon)^{t}\cdot\left(\mathcal{H}_{\infty}^{t}(L(K))+\varepsilon\right)
\]
which completes the proof.
\end{proof}

\noindent \textit{Proof of Proposition \ref{lem:Lin neighbourhood lem}.}
The proposition is an immediate consequence of Proposition \ref{prop:upersemicont}.$\hfill\square$\\

\noindent \textit{Proof of Theorem \ref{thm: Egynlo lin Im}.} By
Proposition \ref{prop:.implicitkov} $\mathcal{H}^{t}(K)<\infty$
hence $K$ is a $t$-set. Let $\varepsilon>0$ be arbitrary. Let $\delta>0$
such that for every linear map $L_{2}:\mathbb{R}^{d}\longrightarrow\mathbb{R}^{d_{2}}$
with $\left\Vert L-L_{2}\right\Vert <\delta$ we have that $\mathcal{H}^{t}(L_{2}(K))\leq\mathcal{H}^{t}(L(K))+\varepsilon$.
Such a $\delta>0$ exists by Proposition \ref{lem:Lin neighbourhood lem}.
Let $\mathcal{I}_{\infty}$ be the set provided by Proposition \ref{lem:Egyenlito disjoint lem T=00003Dinfty}
for $O^{-1}$ in place of $O$ and $\frac{\delta}{\left\Vert L\right\Vert }$
in place of $\delta$. Then $\left\Vert O\circ T_{\boldsymbol{\mathbf{i}}}-Id_{\mathbb{R}^{d}}\right\Vert =\left\Vert T_{\boldsymbol{\mathbf{i}}}-O^{-1}\right\Vert <\frac{\delta}{\left\Vert L\right\Vert }$
for every $\boldsymbol{\mathbf{i}}\in\mathcal{I}_{\infty}$, hence
$\left\Vert L\circ O\circ T_{\boldsymbol{\mathbf{i}}}-L\right\Vert \leq\left\Vert L\right\Vert \cdot\left\Vert O\circ T_{\boldsymbol{\mathbf{i}}}-Id_{\mathbb{R}^{d}}\right\Vert <\delta$.
So $\mathcal{H}^{t}\left(L\circ O\circ T_{\boldsymbol{\mathbf{i}}}(K)\right)\leq\mathcal{H}^{t}\left(L(K)\right)+\varepsilon$
for every $\boldsymbol{\mathbf{i}}\in\mathcal{I}_{\infty}$, hence
\begin{equation}
\mathcal{H}^{t}\left(L\circ O(S_{\boldsymbol{\mathbf{i}}}(K))\right)=r_{\boldsymbol{\mathbf{i}}}^{t}\cdot\mathcal{H}^{t}\left(L\circ O\circ T_{\boldsymbol{\mathbf{i}}}(K)\right)\leq r_{\boldsymbol{\mathbf{i}}}^{t}\cdot\left(\mathcal{H}^{t}\left(L(K)\right)+\varepsilon\right).\label{eq:Egyen linimeq -01}
\end{equation}
Since $K$ is a $t$-set we have by Remark \ref{Rem: Mainlem EQ}
that
\begin{equation}
\sum_{\boldsymbol{\mathbf{i}}\in\mathcal{I}_{\infty}}r_{\boldsymbol{\mathbf{i}}}^{t}=1.\label{eq:egyen linim eq00}
\end{equation}
It follows that
\[
\sum_{\boldsymbol{\mathbf{i}}\in\mathcal{I}_{\infty}}\mathcal{H}^{t}\left(L\circ O(S_{\boldsymbol{\mathbf{i}}}(K))\right)\leq\sum_{\boldsymbol{\mathbf{i}}\in\mathcal{I}_{\infty}}r_{\boldsymbol{\mathbf{i}}}^{t}\cdot\left(\mathcal{H}^{t}\left(L(K)\right)+\varepsilon\right)=\mathcal{H}^{t}\left(L(K)\right)+\varepsilon
\]
where we have used (\ref{eq:Egyen linimeq -01}) and (\ref{eq:egyen linim eq00}).
Because $\mathcal{H}^{t}\left(L\circ O\left(K\setminus\left(\bigcup_{\boldsymbol{\mathbf{i}}\in\mathcal{I}_{\infty}}S_{\boldsymbol{\mathbf{i}}}(K)\right)\right)\right)=0$
it follows that $\mathcal{H}^{t}\left(L\circ O(K)\right)\leq\mathcal{H}^{t}\left(L(K)\right)+\varepsilon$
for all $\varepsilon>0$. Hence $\mathcal{H}^{t}\left(L\circ O(K)\right)\leq\mathcal{H}^{t}\left(L(K)\right)$.
Replacing $L$ by $L\circ O$ and $O$ by $O^{-1}$, with the same
argument we get that $\mathcal{H}^{t}\left(L(K)\right)=\mathcal{H}^{t}\left(L\circ O\circ O^{-1}(K)\right)\leq\mathcal{H}^{t}\left(L\circ O(K)\right)$.
Thus $\mathcal{H}^{t}\left(L\circ O(K)\right)=\mathcal{H}^{t}\left(L(K)\right)$
and so (\ref{eq:ORBIT_EQ}) holds for $A=K$.

Let $\boldsymbol{\mathbf{i}}\in\mathcal{I}^{k}$ for some $k\in\mathbb{N}$.
Then
\[
\begin{aligned}\mathcal{H}^{t}\left(L\circ O(K_{\boldsymbol{\mathbf{i}}})\right) & =\mathcal{H}^{t}\left(L\circ O(S_{\boldsymbol{\mathbf{i}}}(K))\right)=\mathcal{H}^{t}\left(L\circ O(r_{\boldsymbol{\mathbf{i}}}\cdot T_{\boldsymbol{\mathbf{i}}}(K)+v_{\boldsymbol{\mathbf{i}}})\right)\\
 & =r_{\boldsymbol{\mathbf{i}}}^{t}\cdot\mathcal{H}^{t}\left(L\circ O\circ T_{\boldsymbol{\mathbf{i}}}(K)\right)=\frac{\mathcal{H}^{t}(K_{\boldsymbol{\mathbf{i}}})}{\mathcal{H}^{t}(K)}\cdot\mathcal{H}^{t}\left(L(K)\right)
\end{aligned}
\]
where we used (\ref{eq:ORBIT_EQ}) when $A=K$. So (\ref{eq:ORBIT_EQ})
holds for $A=K_{\boldsymbol{\mathbf{i}}}$, for each $\boldsymbol{\mathbf{i}}\in\mathcal{I}^{k}$,
$k\in\mathbb{N}$.

Let $\mathcal{J}$ be the set provided by Lemma \ref{lem:disjointing lem}.
For every $k\in\mathbb{N}$ let us denote the set $\left\{ \boldsymbol{\mathbf{i}}_{1}*\ldots*\boldsymbol{\mathbf{i}}_{k}:\boldsymbol{\mathbf{i}}_{1},\ldots,\boldsymbol{\mathbf{i}}_{k}\in\mathcal{J}\right\} $
by $\mathcal{J}^{k}$. For $k\in\mathbb{N}$
\begin{equation}
\mathcal{H}^{t}\left(K\setminus\left(\bigcup_{\boldsymbol{\mathbf{i}}\in\mathcal{J}^{k}}K_{\boldsymbol{\mathbf{i}}}\right)\right)=0,\label{eq: egyenlovet-full mertek}
\end{equation}
thus
\[
\mathcal{H}^{t}(K)=\mathcal{H}^{t}\left(\bigcup_{\boldsymbol{\mathbf{i}}\in\mathcal{J}^{k}}K_{\boldsymbol{\mathbf{i}}}\right)=\sum_{\boldsymbol{\mathbf{i}}\in\mathcal{J}^{k}}\mathcal{H}^{t}(K_{\boldsymbol{\mathbf{i}}}).
\]
So
\[
\begin{aligned}\sum_{\boldsymbol{\mathbf{i}}\in\mathcal{J}^{k}}\mathcal{H}^{t}\left(L\circ O(K_{\boldsymbol{\mathbf{i}}})\right) & =\sum_{\boldsymbol{\mathbf{i}}\in\mathcal{J}^{k}}\frac{\mathcal{H}^{t}(K_{\boldsymbol{\mathbf{i}}})}{\mathcal{H}^{t}(K)}\cdot\mathcal{H}^{t}\left(L(K)\right)=\frac{\mathcal{H}^{t}(K)}{\mathcal{H}^{t}(K)}\cdot\mathcal{H}^{t}\left(L(K)\right)\\
 & =\mathcal{H}^{t}\left(L\circ O(K)\right)=\mathcal{H}^{t}\left(L\circ O\left(\bigcup_{\boldsymbol{\mathbf{i}}\in\mathcal{J}^{k}}K_{\boldsymbol{\mathbf{i}}}\right)\right)
\end{aligned}
\]
where we used (\ref{eq:ORBIT_EQ}) for $A=K_{\boldsymbol{\mathbf{i}}}$
and for $A=K$. It follows that $\mathcal{H}^{t}\left(L\circ O(K_{\boldsymbol{\mathbf{i}}})\cap L\circ O(K_{\boldsymbol{\mathbf{j}}})\right)=0$
for $\boldsymbol{\mathbf{i}},\boldsymbol{\mathbf{j}}\in\mathcal{J}^{k}$,
$\boldsymbol{\mathbf{i}}\neq\boldsymbol{\mathbf{j}}$. Hence (\ref{eq:ORBIT_EQ})
holds for $A=\bigcup_{\boldsymbol{\mathbf{i}}\in\mathcal{F}}K_{\boldsymbol{\mathbf{i}}}$
where $\mathcal{F}\subseteq\mathcal{J}^{k}$.

Using (\ref{eq: egyenlovet-full mertek}) and the continuity of measures
it follows that
\begin{equation}
\mathcal{H}^{t}\left(K\setminus\left(\bigcap_{k=1}^{\infty}\bigcup_{\boldsymbol{\mathbf{i}}\in\mathcal{J}^{k}}K_{\boldsymbol{\mathbf{i}}}\right)\right)=0.\label{eq:egynlovet full mertek 222}
\end{equation}

Assume that $A\subseteq\bigcap_{k=1}^{\infty}\bigcup_{\boldsymbol{\mathbf{i}}\in\mathcal{J}^{k}}K_{\boldsymbol{\mathbf{i}}}$
is compact, $\varepsilon>0$ arbitrary and let
\[
F_{k}=\bigcup_{\boldsymbol{\mathbf{i}}\in\mathcal{J}^{k},K_{\boldsymbol{\mathbf{i}}}\cap A\neq\emptyset}K_{\boldsymbol{\mathbf{i}}}.
\]
Then $K\supseteq F_{1}\supseteq F_{2}\supseteq\ldots$ and $A=\bigcap_{k=1}^{\infty}F_{k}$.
Thus there exists $k$ such that $\mathcal{H}^{t}\left(F_{k}\setminus A\right)<\varepsilon$.
Since (\ref{eq:ORBIT_EQ}) holds for $F_{k}$ it follows that
\begin{equation}
\begin{aligned}\mathcal{H}^{t}\left(L\circ O(A)\right) & \leq\mathcal{H}^{t}\left(L\circ O(F_{k})\right)=\frac{\mathcal{H}^{t}(F_{k})}{\mathcal{H}^{t}(K)}\mathcal{H}^{t}\left(L(K)\right)\\
 & =\frac{\mathcal{H}^{t}(A)+\mathcal{H}^{t}(F_{k}\setminus A)}{\mathcal{H}^{t}(K)}\mathcal{H}^{t}\left(L(K)\right)\leq\frac{\mathcal{H}^{t}(A)+\varepsilon}{\mathcal{H}^{t}(K)}\mathcal{H}^{t}\left(L(K)\right)
\end{aligned}
\label{eq:Refeq1}
\end{equation}
and
\begin{equation}
\begin{aligned}\mathcal{H}^{t}\left(L\circ O(A)\right) & \geq\mathcal{H}^{t}\left(L\circ O(F_{k})\right)-\mathcal{H}^{t}\left(L\circ O(F_{k}\setminus A)\right)\\
 & \geq\frac{\mathcal{H}^{t}(F_{k})}{\mathcal{H}^{t}(K)}\mathcal{H}^{t}\left(L(K)\right)-\left\Vert L\circ O\right\Vert ^{t}\cdot\mathcal{H}^{t}\left(F_{k}\setminus A\right)\\
 & \geq\frac{\mathcal{H}^{t}(A)}{\mathcal{H}^{t}(K)}\mathcal{H}^{t}\left(L(K)\right)-\left\Vert L\right\Vert ^{t}\cdot\varepsilon.
\end{aligned}
\label{eq:Refeq2}
\end{equation}
Since $\varepsilon>0$ is arbitrary (\ref{eq:ORBIT_EQ}) holds for
compact $A\subseteq\bigcap_{k=1}^{\infty}\bigcup_{\boldsymbol{\mathbf{i}}\in\mathcal{J}^{k}}K_{\boldsymbol{\mathbf{i}}}$.

Now assume that $A$ is any $\mathcal{H}^{t}$-measurable set and
$\varepsilon>0$ is arbitrary. By (\ref{eq:egynlovet full mertek 222})
\[
\mathcal{H}^{t}\left(A\bigcap\left(\bigcap_{k=1}^{\infty}\bigcup_{\boldsymbol{\mathbf{i}}\in\mathcal{J}^{k}}K_{\boldsymbol{\mathbf{i}}}\right)\right)=\mathcal{H}^{t}(A).
\]
Hence we can find a compact $F\subseteq A\bigcap\left(\bigcap_{k=1}^{\infty}\bigcup_{\boldsymbol{\mathbf{i}}\in\mathcal{J}^{k}}K_{\boldsymbol{\mathbf{i}}}\right)\subseteq A$
such that $\mathcal{H}^{t}(A\setminus F)<\varepsilon$. Using a similar
argument to (\ref{eq:Refeq1}) and (\ref{eq:Refeq2}) we can deduce
that $\mathcal{H}^{t}\left(L\circ O(A)\right)=\frac{\mathcal{H}^{t}(A)}{\mathcal{H}^{t}(K)}\mathcal{H}^{t}\left(L(K)\right)$
because (\ref{eq:ORBIT_EQ}) holds for $F$. Thus (\ref{eq:ORBIT_EQ})
holds for every $\mathcal{H}^{t}$-measurable $A$.

Now let $A\subseteq K$ be arbitrary and let $B$ be a $\mathcal{H}^{t}$-measurable
hull of $A$ such that $A\subseteq B\subseteq K$. By virtue of Lemma
\ref{lem:Hull Lemma} and applying (\ref{eq:ORBIT_EQ}) to $B$ we
get that (\ref{eq:ORBIT_EQ}) holds for $A$.$\hfill\square$\\

\noindent \textit{Proof of Corollary \ref{cor: SS almost disjoint A B}.}
If $\mathcal{H}^{t}(K)=0$ then the statement is trivial, so we can
assume that $\mathcal{H}^{t}(K)>0$. Since $B\subseteq(K\setminus A)\cup(A\cap B)$
and $\mathcal{H}^{t}\left(A\cap B\right)=0$ it is enough to show
that $\mathcal{H}^{t}\left(L(A)\cap L(K\setminus A)\right)=0$. By
Theorem \ref{thm: Egynlo lin Im}
\[
\begin{aligned}\mathcal{H}^{t}\left(L(K)\right) & =\mathcal{H}^{t}\left(L(A)\cup L(K\setminus A)\right)\\
 & =\mathcal{H}^{t}\left(L(A)\right)+\mathcal{H}^{t}\left(L(K\setminus A)\right)-\mathcal{H}^{t}\left(L(A)\cap L(K\setminus A)\right)\\
 & =\frac{\mathcal{H}^{t}(A)}{\mathcal{H}^{t}(K)}\mathcal{H}^{t}\left(L(K)\right)+\frac{\mathcal{H}^{t}(K\setminus A)}{\mathcal{H}^{t}(K)}\mathcal{H}^{t}\left(L(K)\right)-\mathcal{H}^{t}\left(L(A)\cap L(K\setminus A)\right)\\
 & =\mathcal{H}^{t}\left(L(K)\right)-\mathcal{H}^{t}\left(L(A)\cap L(K\setminus A)\right).
\end{aligned}
\]
Hence $\mathcal{H}^{t}\left(L(A)\cap L(K\setminus A)\right)=0$ since
$\mathcal{H}^{t}\left(L(K)\right)<\infty$ by Proposition \ref{prop:.implicitkov}.$\hfill\square$

\section{Transformation groups of dense orbits and Hausdorff measure\label{sec:Transformation-groups-of}}

In this section our main goal is to prove Theorem \ref{cor:Dense group lin image 0}.
First we show that under the assumptions of Theorem \ref{cor:Dense group lin image 0}
every linear image of $K$ is of zero measure. Then we generalise
this for continuously differentiable maps.

\begin{lem}
\label{lem: eta-cover lemma}Let $\mathcal{G}$ be a closed subset
of $\mathbb{O}_{d}$, let $K\subseteq\mathbb{R}^{d}$ be a compact
set, let $L:\mathbb{R}^{d}\longrightarrow\mathbb{R}^{d_{2}}$ be a
linear map and $c>0$ be such that $\mathcal{H}_{\infty}^{t}\left(L\circ O(K)\right)<c$
for every $O\in\mathcal{G}$. Then there exists $\zeta>0$ such that
for every $O\in\mathcal{G}$ there exists a finite open cover $\mathcal{U}$
of $L\circ O(K)$ such that $\sum_{U\in\mathcal{U}}\mathrm{diam}(U)^{t}<c$
and $\min_{U\in\mathcal{U}}\mathrm{diam}(U)>\zeta$.
\end{lem}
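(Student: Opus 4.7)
The plan is to prove this by producing, for each fixed $O_0 \in \mathcal{G}$, a single finite open cover that works uniformly for all $O$ in a $\mathcal{G}$-neighborhood of $O_0$ and whose member-diameters are bounded below by a positive constant $\epsilon_{O_0}$, and then combining finitely many such covers via compactness of $\mathcal{G}$. Compactness is available because $\mathcal{G}$ is closed in the compact group $\mathbb{O}_{d}$.

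First I fix $O_0 \in \mathcal{G}$. By hypothesis $\mathcal{H}_{\infty}^{t}(L\circ O_0(K))<c$, and since $L\circ O_0(K)$ is compact (continuous image of compact $K$), a standard approximation—slightly enlarging the sets of a countable near-optimal cover so that they become open, then extracting a finite subcover—yields a finite open cover $\{U_1,\ldots,U_n\}$ of $L\circ O_0(K)$ with $\sum_{i=1}^{n}\mathrm{diam}(U_i)^{t}<c$. Next I choose $\epsilon_{O_0}>0$ so small that $\sum_{i=1}^{n}(\mathrm{diam}(U_i)+2\epsilon_{O_0})^{t}<c$ as well; this is possible because the left-hand side is a finite sum, hence continuous in $\epsilon_{O_0}$, and equals the original sum when $\epsilon_{O_0}=0$. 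Replacing each $U_i$ by its open $\epsilon_{O_0}$-neighborhood $V_i=\{y:d(y,U_i)<\epsilon_{O_0}\}$, I obtain a finite open cover with $\sum_{i}\mathrm{diam}(V_i)^{t}\leq\sum_{i}(\mathrm{diam}(U_i)+2\epsilon_{O_0})^{t}<c$, and each $V_i$ contains an open ball of radius $\epsilon_{O_0}$, so $\mathrm{diam}(V_i)\geq 2\epsilon_{O_0}$.

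Now $\{V_i\}$ covers the entire open $\epsilon_{O_0}$-neighborhood of $L\circ O_0(K)$. Setting $R=\sup_{x\in K}\|x\|$, if $O\in\mathcal{G}$ satisfies $\|O-O_0\|<\epsilon_{O_0}/(\|L\|R+1)$, then for every $x\in K$ we have $\|L\circ O(x)-L\circ O_0(x)\|\leq\|L\|\cdot\|O-O_0\|\cdot\|x\|<\epsilon_{O_0}$, so $L\circ O(K)$ lies within this neighborhood and is covered by $\{V_i\}$. This defines an open $\mathcal{G}$-neighborhood $W_{O_0}$ of $O_0$ on which the cover $\{V_i\}$ works, with diameters $\geq 2\epsilon_{O_0}$ and sum-of-$t$-powers $<c$. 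By compactness of $\mathcal{G}$, finitely many $W_{O_1},\ldots,W_{O_k}$ cover $\mathcal{G}$, and $\zeta=\min_{1\leq i\leq k}\epsilon_{O_i}>0$ works: any $O\in\mathcal{G}$ lies in some $W_{O_i}$ and inherits that cover, whose diameters all exceed $\zeta$.

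The main subtlety I expect is the interplay between keeping $\sum\mathrm{diam}(V_i)^{t}<c$ and simultaneously extracting a positive lower bound on $\mathrm{diam}(V_i)$. Both are controlled by the same enlargement radius $\epsilon_{O_0}$, and both rely on the cover being \emph{finite}: for an infinite cover, the tail diameters would tend to zero while a uniform additive enlargement of $2\epsilon_{O_0}$ would contribute $\sum(2\epsilon_{O_0})^{t}=\infty$ to the sum, breaking the bound. The finiteness is secured up front by compactness of $L\circ O_0(K)$, which is why the argument is essentially qualitative rather than quantitative.
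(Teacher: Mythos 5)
Your proof is correct and follows essentially the same strategy as the paper's: fatten a finite near-optimal open cover of $L\circ O_0(K)$ by a small enough amount that the sum of $t$-th powers of diameters stays below $c$, note that the fattened cover also covers $L\circ O(K)$ for all $O$ in a neighbourhood of $O_0$, and conclude by compactness of $\mathcal{G}$. The only cosmetic difference is that the paper fattens multiplicatively (by $\zeta_{O}\varepsilon_{O}$ where $\zeta_{O}$ is the minimal diameter of the original cover) and takes the lower bound $\zeta$ from the original covers, whereas you fatten additively and read the lower bound off the fattening radius itself; both work.
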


\begin{proof}
For every $O\in\mathcal{G}$ we can find a finite open cover $\mathcal{U}_{O}$
of $L\circ O(K)$ and $0<\varepsilon_{O}<\frac{1}{2}$ such that $\sum_{U\in\mathcal{U}_{O}}\mathrm{diam}(U)^{t}\cdot(1+2\varepsilon_{O})^{t}<c$.
Let $\zeta_{O}=\min_{U\in\mathcal{U}_{O}}\mathrm{diam}(U)>0$ and
$\widehat{U}$ be the $\zeta_{O}\cdot\varepsilon_{O}$-neigbourhood
of $U$ for each $U\in\mathcal{U}_{O}$. We can find $\delta_{O}>0$
such that if $O_{2}\in\mathbb{O}_{d}$ and $\left\Vert O-O_{2}\right\Vert <\delta_{O}$
then $L\circ O_{2}(K)$ is contained in the $\zeta_{O}\cdot\varepsilon_{O}$-neighbourhood
of $L\circ O(K)$, hence $L\circ O_{2}(K)$ is covered by $\left\{ \widehat{U}:U\in\mathcal{U}_{O}\right\} $.
Then $\left\{ \widehat{U}:U\in\mathcal{U}_{O}\right\} $ is an open
cover of $L\circ O_{2}(K)$,
\[
\sum_{U\in\mathcal{U}_{O}}\mathrm{diam}(\widehat{U})^{t}\leq\sum_{U\in\mathcal{U}_{O}}\mathrm{diam}(U)^{t}\cdot(1+2\varepsilon_{O})^{t}<c
\]
and $\min_{U\in\mathcal{U}_{O}}\mathrm{diam}(\widehat{U})>\zeta_{O}$.

As $\mathcal{G}$ is compact, we can find finitely many orthogonal
transformations $O_{1},\ldots,O_{n}\in\mathcal{G}$ such that for
every $O\in\mathcal{G}$ there exists $i\in\left\{ 1,\ldots,n\right\} $
with $\left\Vert O_{i}-O\right\Vert <\delta_{O_{i}}$. Hence $\zeta=\min_{1\leq i\leq n}\zeta_{O_{i}}$
satisfies the statement.
\end{proof}

For $r\in\mathbb{R}$, $r>0$ and $H\subseteq\mathbb{R}^{d}$ we denote
the \textit{$r$-neigbourhood} of $H$ by $B(H,r)$, i.e. $B(H,r)=\left\{ x\in\mathbb{R}^{d}:\exists y\in H,\left\Vert x-y\right\Vert <r\right\} $.

\begin{prop}
\label{cor:Linim 0 lemakeppen}Let $\left\{ S_{i}\right\} _{i=1}^{m}$
be an SS-IFS with attractor $K$, $t=\dim_{H}(K)$ and $L:\mathbb{R}^{d}\longrightarrow\mathbb{R}^{d_{2}}$
be a linear map with $\mathrm{rank}(L)=l$. If $1\leq l<d$ and there
exists $M\in G_{d,l}$ such that the set $\left\{ O(M):O\in\mathcal{T}\right\} $
is dense in $G_{d,l}$ then $\mathcal{H}^{t}\left(L(K)\right)=0$.
\end{prop}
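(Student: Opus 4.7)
\noindent
Assume for contradiction that $\mathcal{H}^{t}(L(K))>0$; by Lemma \ref{lem:H^s(vet)<H^s} applied to the Lipschitz map $L$ we must then have $\mathcal{H}^{t}(K)>0$, so $K$ is a $t$-set and Theorem \ref{thm: Egynlo lin Im} is available. The strategy is to produce two $\mathcal{H}^{t}$-almost disjoint subsets $A,B\subseteq K$ of equal positive $\mathcal{H}^{t}$-measure whose images $L\circ O_{0}(A)$ and $L\circ O_{0}(B)$ coincide for some $O_{0}\in\overline{\mathcal{T}}$. Indeed, Corollary \ref{cor: SS almost disjoint A B} applied to the linear map $L\circ O_{0}$ then gives $\mathcal{H}^{t}(L\circ O_{0}(A)\cap L\circ O_{0}(B))=0$; since these two sets are equal, $\mathcal{H}^{t}(L\circ O_{0}(A))=0$, while Theorem \ref{thm: Egynlo lin Im} forces $\mathcal{H}^{t}(L\circ O_{0}(A))=\frac{\mathcal{H}^{t}(A)}{\mathcal{H}^{t}(K)}\mathcal{H}^{t}(L(K))>0$, a contradiction that collapses $\mathcal{H}^{t}(L(K))$ to $0$.

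The natural candidates for $A,B$ are two disjoint cylinders $K_{\boldsymbol{\mathbf{i}}_{1}},K_{\boldsymbol{\mathbf{i}}_{2}}$ satisfying $r_{\boldsymbol{\mathbf{i}}_{1}}=r_{\boldsymbol{\mathbf{i}}_{2}}=:r_{0}$ and $T_{\boldsymbol{\mathbf{i}}_{1}}=T_{\boldsymbol{\mathbf{i}}_{2}}=:T_{0}$; then $v:=v_{\boldsymbol{\mathbf{i}}_{1}}-v_{\boldsymbol{\mathbf{i}}_{2}}\neq0$, Lemma \ref{lem: Mag lemma} produces $O_{0}\in\overline{\mathcal{T}}$ with $L\circ O_{0}(v)=0$, and a direct computation gives $L\circ O_{0}(K_{\boldsymbol{\mathbf{i}}_{j}})=r_{0}\,L\circ O_{0}\circ T_{0}(K)+L\circ O_{0}(v_{\boldsymbol{\mathbf{i}}_{j}})$, with the two sets ($j=1,2$) equal because $L\circ O_{0}(v_{\boldsymbol{\mathbf{i}}_{1}})=L\circ O_{0}(v_{\boldsymbol{\mathbf{i}}_{2}})$. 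In commutative situations such as the planar case, such cylinders are readily produced by taking $\boldsymbol{\mathbf{i}}_{1}=(i,j)$ and $\boldsymbol{\mathbf{i}}_{2}=(j,i)$ for any indices with $K_{i}\cap K_{j}=\emptyset$.

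In the general non-abelian setting the semigroup generated by the $T_{i}$ may be essentially free, so exact equality $T_{\boldsymbol{\mathbf{i}}_{1}}=T_{\boldsymbol{\mathbf{i}}_{2}}$ between distinct cylinders need not hold and I would instead use an approximate overlap. Taking any two disjoint cylinders $K_{\boldsymbol{\mathbf{i}}_{1}},K_{\boldsymbol{\mathbf{i}}_{2}}$ and applying Lemma \ref{lem: Mag lemma} with $v=v_{\boldsymbol{\mathbf{i}}_{1}}-v_{\boldsymbol{\mathbf{i}}_{2}}$ gives $O_{0}$ with $L\circ O_{0}(v)=0$; Proposition \ref{lem:Sok reszes Egyenlito disjoint lem} applied with this $O_{0}$, the pair $\boldsymbol{\mathbf{i}}_{1},\boldsymbol{\mathbf{i}}_{2}$, and tolerance $\delta>0$ yields $\mathcal{I}_{\infty}$ with $\|T_{\boldsymbol{\mathbf{i}}}-O_{0}\|<\delta$ and $\bigsqcup_{\boldsymbol{\mathbf{i}}\in\mathcal{I}_{\infty},\,j}K_{\boldsymbol{\mathbf{i}}*\boldsymbol{\mathbf{i}}_{j}}$ covering $K$ modulo $\mathcal{H}^{t}$-null. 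The two pieces $L(K_{\boldsymbol{\mathbf{i}}*\boldsymbol{\mathbf{i}}_{1}})$ and $L(K_{\boldsymbol{\mathbf{i}}*\boldsymbol{\mathbf{i}}_{2}})$ then differ by a translation of length at most $r_{\boldsymbol{\mathbf{i}}}\cdot\|L\|\cdot\delta\cdot\|v\|$ (after matching the identical shape parts coming from the common $O_{0}$-limit), so the pair can be replaced by a small thickening of a single piece. Combining this saving with Proposition \ref{lem:Lin neighbourhood lem}(i), which identifies Hausdorff measure with Hausdorff content on linear images of $K$, and Remark \ref{Rem: Mainlem EQ}, which provides the summation identity for $r_{\boldsymbol{\mathbf{i}}}^{t}$, yields $\mathcal{H}_{\infty}^{t}(L(K))<\mathcal{H}^{t}(L(K))$, contradicting Proposition \ref{lem:Lin neighbourhood lem}(i).

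The main obstacle is arranging the approximate-overlap accounting so that, after summing over $\boldsymbol{\mathbf{i}}\in\mathcal{I}_{\infty}$, the $\frac{1}{2}$-factor coming from identifying each pair $L(K_{\boldsymbol{\mathbf{i}}*\boldsymbol{\mathbf{i}}_{1}}),L(K_{\boldsymbol{\mathbf{i}}*\boldsymbol{\mathbf{i}}_{2}})$ is not wiped out by the cumulative thickening error: this requires choosing the Vitali scale well below $\delta\|L\|\|v\|/r_{0}$ and using the quantitative upper semi-continuity in Proposition \ref{lem:Lin neighbourhood lem}(ii) to control $\mathcal{H}^{t}(L_{2}(K))$ uniformly in the perturbed linear maps $L_{2}=L\circ T_{\boldsymbol{\mathbf{i}}}\circ T_{0}$. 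Once $\delta$ is sent to zero the strict content inequality follows, completing the contradiction.
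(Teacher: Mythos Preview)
Your first paragraph is an elegant reformulation: if one could find two disjoint cylinders $K_{\boldsymbol{\mathbf{i}}_{1}},K_{\boldsymbol{\mathbf{i}}_{2}}$ with $r_{\boldsymbol{\mathbf{i}}_{1}}=r_{\boldsymbol{\mathbf{i}}_{2}}$ and $T_{\boldsymbol{\mathbf{i}}_{1}}=T_{\boldsymbol{\mathbf{i}}_{2}}$, then Corollary~\ref{cor: SS almost disjoint A B} plus Theorem~\ref{thm: Egynlo lin Im} gives an immediate contradiction. That route is cleaner than the paper's, and in the commutative case it works exactly as you say.

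The gap is in your third paragraph. You take \emph{arbitrary} disjoint cylinders $K_{\boldsymbol{\mathbf{i}}_{1}},K_{\boldsymbol{\mathbf{i}}_{2}}$ and apply Proposition~\ref{lem:Sok reszes Egyenlito disjoint lem} with $O=O_{0}$. For $\boldsymbol{\mathbf{i}}\in\mathcal{I}_{\infty}$ the translation difference $r_{\boldsymbol{\mathbf{i}}}\,L\circ T_{\boldsymbol{\mathbf{i}}}(v)$ is indeed small, but the two ``shape parts'' are $r_{\boldsymbol{\mathbf{i}}}r_{\boldsymbol{\mathbf{i}}_{j}}\,L\circ T_{\boldsymbol{\mathbf{i}}}\circ T_{\boldsymbol{\mathbf{i}}_{j}}(K)$ for $j=1,2$. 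Since $T_{\boldsymbol{\mathbf{i}}}\approx O_{0}$, these converge to $L\circ O_{0}\circ T_{\boldsymbol{\mathbf{i}}_{1}}(K)$ and $L\circ O_{0}\circ T_{\boldsymbol{\mathbf{i}}_{2}}(K)$ respectively, which are \emph{different} sets in general (and the scaling ratios $r_{\boldsymbol{\mathbf{i}}_{1}},r_{\boldsymbol{\mathbf{i}}_{2}}$ need not agree either). There is no ``common $O_{0}$-limit'' for the shapes, so you cannot cover the pair by a single thickened copy, and the $\tfrac{1}{2}$-saving never materialises.

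The paper's fix is precisely to rescue the $(i,j)/(j,i)$ trick you discarded. First use Proposition~\ref{lem:Egyenlito disjoint lem T=00003Dinfty} with $O=Id_{\mathbb{R}^{d}}$ to find disjoint $K_{\boldsymbol{\mathbf{i}}_{1}},K_{\boldsymbol{\mathbf{i}}_{2}}$ with $\|T_{\boldsymbol{\mathbf{i}}_{1}}-Id\|,\|T_{\boldsymbol{\mathbf{i}}_{2}}-Id\|<\delta/4$; then set $a=\boldsymbol{\mathbf{i}}_{1}*\boldsymbol{\mathbf{i}}_{2}$ and $b=\boldsymbol{\mathbf{i}}_{2}*\boldsymbol{\mathbf{i}}_{1}$. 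Now $r_{a}=r_{b}$ automatically, and although $T_{a}\neq T_{b}$ in the non-abelian case, \emph{both} are within $\delta/2$ of the identity. One then applies Proposition~\ref{lem:Sok reszes Egyenlito disjoint lem} with $O=Id_{\mathbb{R}^{d}}$ (not $O_{0}$), so that every $T_{\boldsymbol{\mathbf{i}}*a}$ and $T_{\boldsymbol{\mathbf{i}}*b}$ is close to the identity; both pieces $L\circ O_{0}(K_{\boldsymbol{\mathbf{i}}*a})$ and $L\circ O_{0}(K_{\boldsymbol{\mathbf{i}}*b})$ are then small perturbations of the \emph{same} reference set $r_{\boldsymbol{\mathbf{i}}}r_{a}\,L\circ O_{0}(K)$, and the cover--and--halve argument goes through. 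The uniform control of the thickening error across all $O\in\overline{\mathcal{T}}$ is handled by the compactness Lemma~\ref{lem: eta-cover lemma}, which plays the role you assign to ``choosing the Vitali scale'' in your last paragraph.
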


\noindent We first show that there exist two words $a$ and $b$ and
$O_{0}\in\mathbb{O}_{d}$ such that $L\circ O_{0}(K_{a})$ and $L\circ O_{0}(K_{b})$
have very large overlap. Then we use a variant of Proposition \ref{lem:Egyenlito disjoint lem T=00003Dinfty}
to show that, due to self-similarity, this remains valid at all scales.
Finally we conclude that due to these overlaps the measure must collapse.

\begin{proof}
It holds in general that $\mathcal{H}^{t}(H)=0$ if and only if $\mathcal{H}_{\infty}^{t}(H)=0$.
Hence it is enough to show that $\mathcal{H}_{\infty}^{t}\left(L(K)\right)=0$.
We can assume that $\mathcal{H}^{t}\left(K\right)>0$ otherwise the
statement is trivial. By Proposition \ref{prop:.implicitkov} $\mathcal{H}^{t}\left(K\right)<\infty$,
hence $K$ is a $t$-set. It follows that $\mathcal{H}^{t}\left(L(K)\right)<\infty$
and by Theorem \ref{thm: Egynlo lin Im} and Proposition \ref{lem:Lin neighbourhood lem}
$\mathcal{H}_{\infty}^{t}\left(L(K)\right)=\mathcal{H}^{t}\left(L(K)\right)=\mathcal{H}^{t}\left(L\circ O_{0}(K)\right)=\mathcal{H}_{\infty}^{t}\left(L\circ O_{0}(K)\right)$
for every $O_{0}\in\overline{\mathcal{T}}$. Let $\varepsilon>0$
be arbitrary, $\mathcal{G}=\overline{\mathcal{T}}$, $c=\mathcal{H}_{\infty}^{t}\left(L(K)\right)+\varepsilon$
and $\zeta>0$ be the $\zeta$ provided by Lemma \ref{lem: eta-cover lemma}.
We can find $\delta>0$ such that for every linear map $L_{2}:\mathbb{R}^{d}\longrightarrow\mathbb{R}^{d}$
such that $\left\Vert Id_{\mathbb{R}^{d}}-L_{2}\right\Vert <\delta$
we have that $L_{2}(K)\subseteq B\left(K,\varepsilon\zeta\right)$.
By Proposition \ref{lem:Egyenlito disjoint lem T=00003Dinfty} we
can find $\boldsymbol{\mathbf{i}}_{1},\boldsymbol{\mathbf{i}}_{2}\in\bigcup_{k=1}^{\infty}\mathcal{I}^{k}$
such that $K_{\boldsymbol{\mathbf{i}}_{1}}\cap K_{\boldsymbol{\mathbf{i}}_{2}}=\emptyset$
and $\left\Vert T_{\boldsymbol{\mathbf{i}}_{1}}-Id_{\mathbb{R}^{d}}\right\Vert <\frac{\delta}{4}$,
$\left\Vert T_{\boldsymbol{\mathbf{i}}_{2}}-Id_{\mathbb{R}^{d}}\right\Vert <\frac{\delta}{4}$.
Let $a=\boldsymbol{\mathbf{i}}_{1}*\boldsymbol{\mathbf{i}}_{2}$ and
$b=\boldsymbol{\mathbf{i}}_{2}*\boldsymbol{\mathbf{i}}_{1}$. Then
$\left\Vert T_{a}-Id_{\mathbb{R}^{d}}\right\Vert <\frac{\delta}{2}$,
$\left\Vert T_{b}-Id_{\mathbb{R}^{d}}\right\Vert <\frac{\delta}{2}$,
$K_{a}\cap K_{b}=\emptyset$ and $r_{a}=r_{b}$. Let $v=S_{b}(0)-S_{a}(0)$
and $O_{0}\in\overline{\mathcal{T}}$ such that $L\circ O_{0}(v)=0$.
We can choose such an $O_{0}$ by Lemma \ref{lem: Mag lemma}. We
can find $\delta_{2}>0$ such that if $\left\Vert L\circ O_{0}-L_{2}\right\Vert <\delta_{2}$
then $\left\Vert L_{2}(v)\right\Vert <r_{a}\varepsilon\zeta$.

We can apply Proposition \ref{lem:Sok reszes Egyenlito disjoint lem}
with $\min\left\{ \frac{\delta}{2},\frac{\delta_{2}}{\left\Vert L\right\Vert }\right\} $
replacing $\delta$, $a$ replacing $\boldsymbol{\mathbf{i}}_{1}$,
$b$ replacing $\boldsymbol{\mathbf{i}}_{2}$, $n=2$ and $O=Id_{\mathbb{R}^{d}}$
to obtain $\mathcal{I}_{\infty}\subseteq\bigcup_{k=1}^{\infty}\mathcal{I}^{k}$
such that $\left\Vert T_{\boldsymbol{\mathbf{i}}}-Id_{\mathbb{R}^{d}}\right\Vert <\min\left\{ \frac{\delta}{2},\frac{\delta_{2}}{\left\Vert L\right\Vert }\right\} $
for all $\boldsymbol{\mathbf{i}}\in\mathcal{I}_{\infty}$, with $\bigcup_{\boldsymbol{\mathbf{i}}\in\mathcal{I}_{\infty}}\left(K_{\boldsymbol{\mathbf{i}}*a}\cup K_{\boldsymbol{\mathbf{i}}*b}\right)$
a disjoint union and $\mathcal{H}^{t}\left(K\setminus\left(\bigcup_{\boldsymbol{\mathbf{i}}\in\mathcal{I}_{\infty}}\left(K_{\boldsymbol{\mathbf{i}}*a}\cup K_{\boldsymbol{\mathbf{i}}*b}\right)\right)\right)=0$.

So $\left\Vert T_{\boldsymbol{\mathbf{i}}}\circ T_{a}-Id_{\mathbb{R}^{d}}\right\Vert <\delta$
and $\left\Vert T_{\boldsymbol{\mathbf{i}}}\circ T_{b}-Id_{\mathbb{R}^{d}}\right\Vert <\delta$,
hence $T_{\boldsymbol{\mathbf{i}}}\circ T_{a}(K)\subseteq B(K,\varepsilon\zeta)$
and $T_{\boldsymbol{\mathbf{i}}}\circ T_{b}(K)\subseteq B(K,\varepsilon\zeta)$.
Thus
\[
r_{\boldsymbol{\mathbf{i}}}r_{a}T_{\boldsymbol{\mathbf{i}}}\circ T_{a}(K)\cup r_{\boldsymbol{\mathbf{i}}}r_{b}T_{\boldsymbol{\mathbf{i}}}\circ T_{b}(K)\subseteq B(r_{\boldsymbol{\mathbf{i}}}r_{a}K,r_{\boldsymbol{\mathbf{i}}}r_{a}\varepsilon\zeta)
\]
since $r_{a}=r_{b}$. Hence
\[
O_{0}\circ S_{\boldsymbol{\mathbf{i}}*a}(K)\subseteq B\left(r_{\boldsymbol{\mathbf{i}}}r_{a}O_{0}(K)+O_{0}\circ S_{\boldsymbol{\mathbf{i}}*a}(0),r_{\boldsymbol{\mathbf{i}}}r_{a}\varepsilon\zeta\right)
\]
and
\[
O_{0}\circ S_{\boldsymbol{\mathbf{i}}*b}(K)\subseteq B\left(r_{\boldsymbol{\mathbf{i}}}r_{a}O_{0}(K)+O_{0}\circ S_{\boldsymbol{\mathbf{i}}*b}(0),r_{\boldsymbol{\mathbf{i}}}r_{a}\varepsilon\zeta\right).
\]
Hence
\[
L\circ O_{0}\circ S_{\boldsymbol{\mathbf{i}}*a}(K)\subseteq B\left(r_{\boldsymbol{\mathbf{i}}}r_{a}L\circ O_{0}(K)+L\circ O_{0}\circ S_{\boldsymbol{\mathbf{i}}*a}(0),\left\Vert L\right\Vert r_{\boldsymbol{\mathbf{i}}}r_{a}\varepsilon\zeta\right)
\]
and
\[
L\circ O_{0}\circ S_{\boldsymbol{\mathbf{i}}*b}(K)\subseteq B\left(r_{\boldsymbol{\mathbf{i}}}r_{a}L\circ O_{0}(K)+L\circ O_{0}\circ S_{\boldsymbol{\mathbf{i}}*a}(0)+L\circ O_{0}\circ r_{\boldsymbol{\mathbf{i}}}T_{\boldsymbol{\mathbf{i}}}(v),\left\Vert L\right\Vert r_{\boldsymbol{\mathbf{i}}}r_{a}\varepsilon\zeta\right).
\]
By the choice of $\delta_{2}$ we have $\left\Vert L\circ O_{0}\circ r_{\boldsymbol{\mathbf{i}}}T_{\boldsymbol{\mathbf{i}}}(v)\right\Vert <r_{\boldsymbol{\mathbf{i}}}r_{a}\varepsilon\zeta$.
Hence
\[
L\circ O_{0}\circ S_{\boldsymbol{\mathbf{i}}*a}(K)\cup L\circ O_{0}\circ S_{\boldsymbol{\mathbf{i}}*b}(K)\subseteq B\left(r_{\boldsymbol{\mathbf{i}}}r_{a}L\circ O_{0}(K)+L\circ O_{0}\circ S_{\boldsymbol{\mathbf{i}}*a}(0),(\left\Vert L\right\Vert +1)r_{\boldsymbol{\mathbf{i}}}r_{a}\varepsilon\zeta\right).
\]

By the choice of $\zeta$ there exists an open cover $\mathcal{U}$
of $L\circ O_{0}(K)$ such that $\sum_{U\in\mathcal{U}}\mathrm{diam}(U)^{t}<\mathcal{H}_{\infty}^{t}\left(L(K)\right)+\varepsilon$
and $\min_{U\in\mathcal{U}}\mathrm{diam}(U)>\zeta$. Let $\widehat{U}=B\left(U,(\left\Vert L\right\Vert +1)\varepsilon\zeta\right)$
for each $U\in\mathcal{U}$ and $\mathcal{A}=\left\{ r_{\boldsymbol{\mathbf{i}}}r_{a}\widehat{U}+L\circ O_{0}\circ S_{\boldsymbol{\mathbf{i}}*a}(0):U\in\mathcal{U},\boldsymbol{\mathbf{i}}\in\mathcal{I}_{\infty}\right\} $.
Then $\mathcal{A}$ is an open cover of $L\circ O_{0}\left(\bigcup_{\boldsymbol{\mathbf{i}}\in\mathcal{I}_{\infty}}\left(K_{\boldsymbol{\mathbf{i}}*a}\cup K_{\boldsymbol{\mathbf{i}}*b}\right)\right)$.

Because $K$ is a $t$-set it follows in a similar way to Remark \ref{Rem: Mainlem EQ}
that $\sum_{\boldsymbol{\mathbf{i}}\in\mathcal{I}_{\infty}}r_{\boldsymbol{\mathbf{i}}}^{t}(r_{a}^{t}+r_{b}^{t})=1$,
hence $\sum_{\boldsymbol{\mathbf{i}}\in\mathcal{I}_{\infty}}r_{\boldsymbol{\mathbf{i}}}^{t}(r_{a}^{t})=\frac{1}{2}$
because $r_{a}=r_{b}$. Thus 
\[
\begin{aligned}\sum_{A\in\mathcal{A}}\mathrm{diam}(A)^{t} & \leq\sum_{U\in\mathcal{U}}\sum_{\boldsymbol{\mathbf{i}}\in\mathcal{I}_{\infty}}r_{\boldsymbol{\mathbf{i}}}^{t}r_{a}^{t}\left(\mathrm{diam}(U)+2(\left\Vert L\right\Vert +1)\varepsilon\zeta\right)^{t}\\
 & \leq\sum_{U\in\mathcal{U}}\sum_{\boldsymbol{\mathbf{i}}\in\mathcal{I}_{\infty}}r_{\boldsymbol{\mathbf{i}}}^{t}r_{a}^{t}\mathrm{diam}(U)^{t}\left(1+2(\left\Vert L\right\Vert +1)\varepsilon\right)^{t}\\
 & =\sum_{U\in\mathcal{U}}\frac{1}{2}\mathrm{diam}(U)^{t}\left(1+2(\left\Vert L\right\Vert +1)\varepsilon\right)^{t}\\
 & \leq\frac{1}{2}\left(\mathcal{H}_{\infty}^{t}\left(L(K)\right)+\varepsilon\right)\left(1+2(\left\Vert L\right\Vert +1)\varepsilon\right)^{t}.
\end{aligned}
\]
Because $\mathcal{H}^{t}\left(K\setminus\left(\bigcup_{\boldsymbol{\mathbf{i}}\in\mathcal{I}_{\infty}}\left(K_{\boldsymbol{\mathbf{i}}*a}\cup K_{\boldsymbol{\mathbf{i}}*b}\right)\right)\right)=0$
it follows that
\[
\mathcal{H}_{\infty}^{t}\left(L\circ O_{0}\left(K\setminus\left(\bigcup_{\boldsymbol{\mathbf{i}}\in\mathcal{I}_{\infty}}\left(K_{\boldsymbol{\mathbf{i}}*a}\cup K_{\boldsymbol{\mathbf{i}}*b}\right)\right)\right)\right)=0.
\]
Hence
\[
\mathcal{H}_{\infty}^{t}\left(L\circ O_{0}(K)\right)\leq\frac{1}{2}\left(\mathcal{H}_{\infty}^{t}\left(L(K)\right)+\varepsilon\right)\left(1+2(\left\Vert L\right\Vert +1)\varepsilon\right)^{t}.
\]
Since this is true for all $\varepsilon>0$ it follows that $\mathcal{H}_{\infty}^{t}\left(L(K)\right)=\mathcal{H}_{\infty}^{t}\left(L\circ O_{0}(K)\right)\leq\frac{1}{2}\cdot\mathcal{H}_{\infty}^{t}\left(L(K)\right)$.
Thus $\mathcal{H}_{\infty}^{t}\left(L(K)\right)=0$.
\end{proof}

\begin{cor}
\label{cor:Linim 0 lemmakeppen KOV}Let $\left\{ S_{i}\right\} _{i=1}^{m}$
be an SS-IFS with attractor $K$, let $t=\dim_{H}(K)$ and $L:\mathbb{R}^{d}\longrightarrow\mathbb{R}^{d_{2}}$
be a linear map with $\mathrm{rank}(L)\leq l$. If $1\leq l<d$ and
there exists $M\in G_{d,l}$ such that the set $\left\{ O(M):O\in\mathcal{T}\right\} $
is dense in $G_{d,l}$ then $\mathcal{H}^{t}\left(L(K)\right)=0$.
\end{cor}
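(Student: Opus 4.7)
The plan is to reduce the rank-$\leq l$ case directly to the rank-$=l$ case already handled by Proposition \ref{cor:Linim 0 lemakeppen}, by embedding $L$ into a linear map of rank exactly $l$ whose image dominates $L(K)$ up to a Lipschitz projection.

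Concretely, write $l' = \mathrm{rank}(L) \leq l$. If $l' = l$ there is nothing to do. Otherwise, I would construct an auxiliary linear map $L_2 : \mathbb{R}^{d} \longrightarrow \mathbb{R}^{d_2 + (l-l')}$ whose first $d_2$ coordinates coincide with $L$ and whose last $l - l'$ coordinates are given by an auxiliary linear map $L_3 : \mathbb{R}^d \longrightarrow \mathbb{R}^{l-l'}$ chosen so that $L_3$ restricted to $\mathrm{Ker}(L)$ has rank $l - l'$. Such an $L_3$ exists because $\dim \mathrm{Ker}(L) = d - l' \geq l - l'$. With this choice,
\[
\mathrm{Ker}(L_2) = \mathrm{Ker}(L) \cap \mathrm{Ker}(L_3)
\]
has dimension $(d - l') - (l - l') = d - l$, so $\mathrm{rank}(L_2) = l$.

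Next, I would apply Proposition \ref{cor:Linim 0 lemakeppen} to $L_2$ under the standing density hypothesis on $\{O(M) : O \in \mathcal{T}\}$ in $G_{d,l}$, which gives $\mathcal{H}^{t}\left(L_2(K)\right) = 0$. Finally, letting $\Pi : \mathbb{R}^{d_2 + (l-l')} \longrightarrow \mathbb{R}^{d_2}$ denote the orthogonal projection onto the first $d_2$ coordinates, we have $L = \Pi \circ L_2$, and $\Pi$ is $1$-Lipschitz, so by Lemma \ref{lem:H^s(vet)<H^s}
\[
\mathcal{H}^{t}\left(L(K)\right) = \mathcal{H}^{t}\left(\Pi(L_2(K))\right) \leq \mathcal{H}^{t}\left(L_2(K)\right) = 0,
\]
which is the desired conclusion.

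There is no substantive obstacle here; the only care needed is the existence of $L_3$ with the required rank on $\mathrm{Ker}(L)$, which is immediate from elementary linear algebra, and the observation that the ambient codimension used in Proposition \ref{cor:Linim 0 lemakeppen} (namely $1 \leq l < d$) is preserved since $l$ is unchanged by the construction.
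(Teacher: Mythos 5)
Your argument is correct. Both you and the paper reduce the rank-$\leq l$ case to Proposition \ref{cor:Linim 0 lemakeppen} by factoring $L$ through a linear map of rank exactly $l$ followed or preceded by a Lipschitz map, but the factorizations go in opposite directions. The paper picks $N\in G_{d,d-l}$ with $N\subseteq\mathrm{Ker}(L)$ (possible since $\dim\mathrm{Ker}(L)=d-\mathrm{rank}(L)\geq d-l$), writes $L=L\circ\Pi_{N^{\perp}}$, applies the proposition to the rank-$l$ projection $\Pi_{N^{\perp}}$ to get $\mathcal{H}^{t}\left(\Pi_{N^{\perp}}(K)\right)=0$, and then uses that $L$ is Lipschitz. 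You instead enlarge the target space, extending $L$ to a rank-$l$ map $L_{2}:\mathbb{R}^{d}\longrightarrow\mathbb{R}^{d_{2}+(l-l')}$ by adjoining coordinates that are injective on a complement of $\mathrm{Ker}(L_{2})$ inside $\mathrm{Ker}(L)$, apply the proposition to $L_{2}$, and recover $L=\Pi\circ L_{2}$ with $\Pi$ a $1$-Lipschitz coordinate projection. Your dimension count for $\mathrm{Ker}(L_{2})=\mathrm{Ker}(L)\cap\mathrm{Ker}(L_{3})$ is right, and since Proposition \ref{cor:Linim 0 lemakeppen} places no restriction on the target dimension $d_{2}$, enlarging it causes no problem. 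The paper's version is marginally more economical in that it needs no auxiliary target space and no choice of $L_{3}$, but the two arguments are logically dual and equally valid.
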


\begin{proof}
If $L:\mathbb{R}^{d}\longrightarrow\mathbb{R}^{d_{2}}$ is a linear
map of rank $k$ and $k\leq l<d$ then $\dim\mathrm{Ker}(L)=d-k$.
Let $N\in G_{d,d-l}$ such that $N\subseteq\mathrm{Ker}(L)$. Then
$L=L\circ\Pi_{N^{\perp}}$. It follows from Proposition \ref{cor:Linim 0 lemakeppen}
that $\mathcal{H}^{t}\left(\Pi_{N^{\perp}}(K)\right)=0$. Hence $\mathcal{H}^{t}\left(L(K)\right)=0$.
\end{proof}

\begin{lem}
\label{lem: LIN-eta-cover lem}Let $K\subseteq\mathbb{R}^{d}$ be
a compact set and $c,M>0$ be constants such that $\mathcal{H}_{\infty}^{t}\left(L(K)\right)<c$
for every linear map $L:\mathbb{R}^{d}\longrightarrow\mathbb{R}^{d_{2}}$
with $\left\Vert L\right\Vert \leq M$. Then there exists $\zeta>0$
such that for every linear map $L:\mathbb{R}^{d}\longrightarrow\mathbb{R}^{d_{2}}$
with $\left\Vert L\right\Vert \leq M$ there exists a finite open
cover $\mathcal{U}$ of $L(K)$ such that $\sum_{U\in\mathcal{U}}\mathrm{diam}(U)^{t}<c$
and $\min_{U\in\mathcal{U}}\mathrm{diam}(U)>\zeta$.
\end{lem}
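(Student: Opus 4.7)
The plan is to follow exactly the same template as the proof of Lemma \ref{lem: eta-cover lemma}, replacing the compact set $\mathcal{G}\subseteq\mathbb{O}_d$ with the compact ball
\[
\mathcal{L}_M=\{L:\mathbb{R}^d\to\mathbb{R}^{d_2}\text{ linear}:\|L\|\leq M\}.
\]
The crucial preliminary observation is that $\mathcal{L}_M$ is a closed, bounded subset of the finite-dimensional normed space $\mathrm{Hom}(\mathbb{R}^d,\mathbb{R}^{d_2})$, hence compact with respect to the operator norm.

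For each fixed $L\in\mathcal{L}_M$, I would first use $\mathcal{H}_\infty^t(L(K))<c$ to choose a finite open cover $\mathcal{U}_L$ of $L(K)$ and some $0<\varepsilon_L<1/2$ with $\sum_{U\in\mathcal{U}_L}\mathrm{diam}(U)^t(1+2\varepsilon_L)^t<c$. Set $\zeta_L=\min_{U\in\mathcal{U}_L}\mathrm{diam}(U)>0$ and enlarge each $U$ to $\widehat{U}=B(U,\zeta_L\varepsilon_L)$, so that $\min_{U\in\mathcal{U}_L}\mathrm{diam}(\widehat{U})>\zeta_L$ while $\sum_{U\in\mathcal{U}_L}\mathrm{diam}(\widehat{U})^t<c$. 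Let $R=\sup_{x\in K}\|x\|<\infty$ by compactness of $K$ (if $R=0$ the lemma is trivial); then for any linear $L_2$ with $\|L-L_2\|<\delta_L:=\zeta_L\varepsilon_L/(R+1)$ and every $x\in K$ one has $\|L(x)-L_2(x)\|\leq\|L-L_2\|\cdot R<\zeta_L\varepsilon_L$, so $L_2(K)\subseteq B(L(K),\zeta_L\varepsilon_L)$, hence $\{\widehat{U}:U\in\mathcal{U}_L\}$ is a cover of $L_2(K)$ with the desired properties.

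Finally, I would invoke compactness of $\mathcal{L}_M$ to extract finitely many $L_1,\ldots,L_n\in\mathcal{L}_M$ whose $\delta_{L_i}$-balls cover $\mathcal{L}_M$, and set $\zeta=\min_{1\leq i\leq n}\zeta_{L_i}>0$. For any $L\in\mathcal{L}_M$ pick $i$ with $\|L-L_i\|<\delta_{L_i}$; the cover $\{\widehat{U}:U\in\mathcal{U}_{L_i}\}$ then satisfies all requirements for $L$ with this uniform $\zeta$.

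There is essentially no obstacle: the argument is purely a translation of Lemma \ref{lem: eta-cover lemma} from the compact group $\overline{\mathcal{T}}\subseteq\mathbb{O}_d$ to the compact ball $\mathcal{L}_M$, using that operator-norm closeness of linear maps implies Hausdorff closeness of their images on the compact set $K$. The only point worth double-checking is finite-dimensionality of $\mathrm{Hom}(\mathbb{R}^d,\mathbb{R}^{d_2})$, which gives compactness of $\mathcal{L}_M$ for free.
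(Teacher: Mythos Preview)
Your proposal is correct and follows precisely the approach the paper indicates: the paper simply states that Lemma~\ref{lem: LIN-eta-cover lem} ``can be proven similarly to Lemma~\ref{lem: eta-cover lemma} due the fact that the unit ball of the set of linear maps between two finite dimensional Euclidean spaces is compact,'' and you have carried out exactly that translation, replacing $\mathcal{G}\subseteq\mathbb{O}_d$ by the compact ball $\mathcal{L}_M\subseteq\mathrm{Hom}(\mathbb{R}^d,\mathbb{R}^{d_2})$.
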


Lemma \ref{lem: LIN-eta-cover lem} can be proven similarly to Lemma
\ref{lem: eta-cover lemma} due the fact that the unit ball of the
set of linear maps between two finite dimensional Euclidean spaces
is compact.\\

\noindent \textit{Proof of Theorem \ref{cor:Dense group lin image 0}.}
We can assume that $\mathcal{H}^{t}\left(K\right)>0$ otherwise the
statement is trivial since $g$ is a Lipschitz map. By Proposition
\ref{prop:.implicitkov} $\mathcal{H}^{t}\left(K\right)<\infty$ and
hence $K$ is a $t$-set. Let $\varepsilon>0$ be fixed.

Let $x_{0}\in K$ be arbitrary. It follows from Corollary \ref{cor:Linim 0 lemmakeppen KOV}
that
\[
\mathcal{H}_{\infty}^{t}\left(g'(S_{\boldsymbol{\mathbf{i}}}(x_{0}))\circ T_{\boldsymbol{\mathbf{i}}}(K)\right)=\mathcal{H}^{t}\left(g'(S_{\boldsymbol{\mathbf{i}}}(x_{0}))\circ T_{\boldsymbol{\mathbf{i}}}(K)\right)=0.
\]
Let $c=\varepsilon$ and $M=\sup\left\{ \left\Vert g'(x)\right\Vert :x\in K\right\} <\infty$,
then let $\zeta>0$ be the $\zeta$ provided by Lemma \ref{lem: LIN-eta-cover lem}.
Hence for every $\boldsymbol{\mathbf{i}}\in\bigcup_{k=1}^{\infty}\mathcal{I}^{k}$
there exists a finite open cover $\mathcal{U}_{\boldsymbol{\mathbf{i}}}$
of $g'(S_{\boldsymbol{\mathbf{i}}}(x_{0}))\circ T_{\boldsymbol{\mathbf{i}}}(K)$
such that
\begin{equation}
\sum_{U\in\mathcal{U}_{\boldsymbol{\mathbf{i}}}}\mathrm{diam}(U)^{t}<\varepsilon\label{eq:diffim_diam<epsilon}
\end{equation}
and $\min_{U\in\mathcal{U}_{\boldsymbol{\mathbf{i}}}}\mathrm{diam}(U)>\zeta$.

From the continuous differentiability of $g$ and the compactness
of $K$ it follows that we can find $\delta>0$ such that
\[
\left\Vert g(y)-g(x)-g'(x)\cdot\left(y-x\right)\right\Vert <\zeta\cdot\left\Vert y-x\right\Vert 
\]
for $x,y\in K$ such that $\left\Vert y-x\right\Vert <\delta$ (see
\cite[Exercise 7(c).3]{Burkill-II Analysis}). By Lemma \ref{lem:disjointing lem}
and Remark \ref{rem: disjointing rem} we can find $\mathcal{J}\subseteq\bigcup_{k=1}^{\infty}\mathcal{I}^{k}$
such that $K_{\boldsymbol{\mathbf{i}}}\cap K_{\boldsymbol{\mathbf{j}}}=\emptyset$
for $\boldsymbol{\mathbf{i}},\boldsymbol{\mathbf{j}}\in\mathcal{J}$,
$\boldsymbol{\mathbf{i}}\neq\boldsymbol{\mathbf{j}}$ and $\mathrm{diam}(K_{\boldsymbol{\mathbf{i}}})<\delta$
for every $\boldsymbol{\mathbf{i}}\in\mathcal{J}$ and
\begin{equation}
\mathcal{H}^{t}\left(K\setminus\left(\bigcup_{\boldsymbol{\mathbf{i}}\in\mathcal{J}}K_{\boldsymbol{\mathbf{i}}}\right)\right)=0.\label{eq:exhaust_diff image}
\end{equation}
Similarly to Remark \ref{Rem: Mainlem EQ} it follows that
\begin{equation}
\sum_{\boldsymbol{\mathbf{i}}\in\mathcal{J}}r_{\boldsymbol{\mathbf{i}}}^{t}=1.\label{eq:diffim_simdim}
\end{equation}

For every $y\in K$ we have that $\left\Vert S_{\boldsymbol{\mathbf{i}}}(y)-S_{\boldsymbol{\mathbf{i}}}(x_{0})\right\Vert \leq\mathrm{diam}(K_{\boldsymbol{\mathbf{i}}})<\delta$
and hence by the choice of $\delta$ it follows that
\[
\left\Vert g\left(S_{\boldsymbol{\mathbf{i}}}(y)\right)-g\left(S_{\boldsymbol{\mathbf{i}}}(x_{0})\right)-g'\left(S_{\boldsymbol{\mathbf{i}}}(x_{0})\right)\cdot\left(S_{\boldsymbol{\mathbf{i}}}(y)-S_{\boldsymbol{\mathbf{i}}}(x_{0})\right)\right\Vert <\zeta\cdot\left\Vert S_{\boldsymbol{\mathbf{i}}}(y)-S_{\boldsymbol{\mathbf{i}}}(x_{0})\right\Vert \leq\zeta r_{\boldsymbol{\mathbf{i}}}\mathrm{diam}(K).
\]
Thus
\begin{equation}
g\left(S_{\boldsymbol{\mathbf{i}}}(y)\right)\in B\left(g'\left(S_{\boldsymbol{\mathbf{i}}}(x_{0})\right)\left(S_{\boldsymbol{\mathbf{i}}}(K)\right)+g\left(S_{\boldsymbol{\mathbf{i}}}(x_{0})\right)-g'\left(S_{\boldsymbol{\mathbf{i}}}(x_{0})\right)\cdot\left(S_{\boldsymbol{\mathbf{i}}}(x_{0})\right),\zeta r_{\boldsymbol{\mathbf{i}}}\mathrm{diam}(K)\right).\label{eq:eta-neigbour_Lin}
\end{equation}
Since $\mathcal{U}_{\boldsymbol{\mathbf{i}}}$ is an open cover of
$g'(S_{\boldsymbol{\mathbf{i}}}(x_{0}))\circ T_{\boldsymbol{\mathbf{i}}}(K)$
it follows from (\ref{eq:eta-neigbour_Lin}) that
\[
\left\{ B\left(r_{\boldsymbol{\mathbf{i}}}U+g'\left(S_{\boldsymbol{\mathbf{i}}}(x_{0})\right)\cdot g\left(S_{\boldsymbol{\mathbf{i}}}(0)\right)+g\left(S_{\boldsymbol{\mathbf{i}}}(x_{0})\right)-g'\left(S_{\boldsymbol{\mathbf{i}}}(x_{0})\right)\cdot\left(S_{\boldsymbol{\mathbf{i}}}(x_{0})\right),\zeta r_{\boldsymbol{\mathbf{i}}}\mathrm{diam}(K)\right):U\in\mathcal{U}_{\boldsymbol{\mathbf{i}}}\right\} 
\]
is an open cover of $g\left(S_{\boldsymbol{\mathbf{i}}}(K)\right)$.
We have that
\[
\mathrm{diam}\left(B\left(r_{\boldsymbol{\mathbf{i}}}U+g'\left(S_{\boldsymbol{\mathbf{i}}}(x_{0})\right)\cdot g\left(S_{\boldsymbol{\mathbf{i}}}(0)\right)+g\left(S_{\boldsymbol{\mathbf{i}}}(x_{0})\right)-g'\left(S_{\boldsymbol{\mathbf{i}}}(x_{0})\right)\cdot\left(S_{\boldsymbol{\mathbf{i}}}(x_{0})\right),\zeta r_{\boldsymbol{\mathbf{i}}}\mathrm{diam}(K)\right)\right)
\]
\begin{equation}
\leq\mathrm{diam}\left(B\left(r_{\boldsymbol{\mathbf{i}}}\cdot U,\zeta r_{\boldsymbol{\mathbf{i}}}\mathrm{diam}(K)\right)\right)\leq r_{\boldsymbol{\mathbf{i}}}\left(\mathrm{diam}(U)+2\zeta\mathrm{diam}(K)\right)\label{eq:diam in diff ima}
\end{equation}
\[
\leq r_{\boldsymbol{\mathbf{i}}}\mathrm{diam}(U)\left(1+2\mathrm{diam}(K)\right).
\]
Since $g$ is a Lipschitz map it follows from (\ref{eq:exhaust_diff image})
that
\[
\mathcal{H}_{\infty}^{t}\left(g\left(K\setminus\left(\bigcup_{\boldsymbol{\mathbf{i}}\in\mathcal{J}}K_{\boldsymbol{\mathbf{i}}}\right)\right)\right)=0.
\]
Hence by (\ref{eq:diam in diff ima})
\[
\begin{aligned}\mathcal{H}_{\infty}^{t}\left(g(K)\right) & \leq\sum_{\boldsymbol{\mathbf{i}}\in\mathcal{J}}\sum_{U\in\mathcal{U}_{\boldsymbol{\mathbf{i}}}}r_{\boldsymbol{\mathbf{i}}}^{t}\mathrm{diam}(U)^{t}\left(1+2\mathrm{diam}(K)\right)^{t}\\
 & \leq\left(1+2\mathrm{diam}(K)\right)^{t}\sum_{\boldsymbol{\mathbf{i}}\in\mathcal{J}}r_{\boldsymbol{\mathbf{i}}}^{t}\sum_{U\in\mathcal{U}_{\boldsymbol{\mathbf{i}}}}\mathrm{diam}(U)^{t}<\left(1+2\mathrm{diam}(K)\right)^{t}\varepsilon
\end{aligned}
\]
where we used (\ref{eq:diffim_diam<epsilon}) and (\ref{eq:diffim_simdim}).
Since this is true for every $\varepsilon>0$ it follows that $\mathcal{H}_{\infty}^{t}\left(g(K)\right)=0$
and hence $\mathcal{H}^{t}\left(g(K)\right)=0$.$\hfill\square$

\section{Transformation groups of dense orbits and Hausdorff dimension}

In this section we prove Theorem \ref{thm:Dense group dim conserv}
and Corollary \ref{cor:Dense diff dim conserv COR}. First we show
Proposition \ref{lem:dim approx lem} and then we derive Theorem \ref{thm:Dense group dim conserv}
from \cite[Corollary 1.7]{Hochman-Schmerkin-local entropy} and Proposition
\ref{lem:dim approx lem}. Finally we conclude Corollary \ref{cor:Dense diff dim conserv COR}
from Theorem \ref{thm:Dense group dim conserv}.

\begin{lem}
\label{lem: fixpoint lem}Let $S_{1}:\mathbb{R}^{d}\longrightarrow\mathbb{R}^{d}$
and $S_{2}:\mathbb{R}^{d}\longrightarrow\mathbb{R}^{d}$ be contracting
similarities with no common fixed point. Then the similarities $S_{1}^{n}\circ S_{2}$
have different fixed points for all $n\in\mathbb{N}$.
\end{lem}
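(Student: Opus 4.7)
My plan is to argue by contradiction, using the fact that a contracting similarity has a unique fixed point and is a bijection of $\mathbb{R}^{d}$.

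Let $p_{1}$ and $p_{2}$ denote the fixed points of $S_{1}$ and $S_{2}$, so by hypothesis $p_{1} \neq p_{2}$. For each $n \in \mathbb{N}$ the similarity $S_{1}^{n} \circ S_{2}$ is a strict contraction (its ratio is $r_{1}^{n} r_{2} < 1$), so it has a unique fixed point, which I will call $x_{n}$. Suppose, toward a contradiction, that $x_{n} = x_{m}$ for some $n < m$ in $\mathbb{N}$; denote this common point by $x$.

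Set $y = S_{2}(x)$. From $S_{1}^{n}(y) = S_{1}^{n} \circ S_{2}(x) = x$ and likewise $S_{1}^{m}(y) = x$, I get $S_{1}^{n}(y) = S_{1}^{m}(y)$, and applying $S_{1}^{-n}$ (which exists because $S_{1}$ is a bijection) yields $y = S_{1}^{m-n}(y)$. Rewriting, $S_{1}^{m-n}(x) = S_{1}^{m-n}(S_{1}^{n}(y)) = S_{1}^{m}(y) = x$. Since $m-n \geq 1$, the map $S_{1}^{m-n}$ is a contracting similarity whose unique fixed point is $p_{1}$, hence $x = p_{1}$.

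Now plug this back in: $S_{1}^{n}(y) = x = p_{1}$. But $S_{1}^{n}(p_{1}) = p_{1}$ as well, and since $S_{1}^{n}$ is injective this forces $y = p_{1}$. Recalling the definition $y = S_{2}(x) = S_{2}(p_{1})$, this says $S_{2}(p_{1}) = p_{1}$, so $p_{1}$ is a common fixed point of $S_{1}$ and $S_{2}$, contradicting the hypothesis. Therefore $x_{n} \neq x_{m}$ whenever $n \neq m$.

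I do not anticipate a real obstacle; the only thing to keep in mind is that the argument uses two separate uniqueness properties of contracting similarities (uniqueness of fixed point, and injectivity), which is what converts the equality $S_{1}^{n}(y) = S_{1}^{m}(y)$ into the forbidden coincidence $S_{2}(p_{1}) = p_{1}$.
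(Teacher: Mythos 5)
Your proof is correct and follows essentially the same route as the paper: assume two of the fixed points coincide, deduce that the common point is fixed by a power of $S_{1}$ and hence equals the fixed point of $S_{1}$, and then conclude that $S_{2}$ also fixes that point, contradicting the hypothesis. No gaps.
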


\begin{proof}
By Banach's fixed point theorem every contracting similarity $S:\mathbb{R}^{d}\longrightarrow\mathbb{R}^{d}$
has a unique fixed point. Assume for a contradiction that there exist
$x\in\mathbb{R}^{d}$, $n\in\mathbb{N}$ and a positive integer $k$
such that that $S_{1}^{n}\circ S_{2}(x)=x$ and $S_{1}^{k}\circ S_{1}^{n}\circ S_{2}(x)=x$.
Then $S_{1}^{-k}(x)=S_{1}^{n}\circ S_{2}(x)=x$. It follows that the
unique fixed point of $S_{1}$ is $x$. But then $S_{2}(x)=S_{1}^{-n}(x)=x$
contradicting that $S_{1}$ and $S_{2}$ have no common fixed point.
\end{proof}

\begin{lem}
\label{lem:FP change}Let $S_{1},\ldots,S_{m}:\mathbb{R}^{d}\longrightarrow\mathbb{R}^{d}$
be contracting similarities $(m\geq2)$ such that $S_{1}$ and $S_{2}$
have no common fixed point. Then there exist $F_{1},\ldots,F_{m}:\mathbb{R}^{d}\longrightarrow\mathbb{R}^{d}$
such that $F_{1}=S_{1}$, $F_{2}=S_{2}$, for each $i\in\left\{ 3,\ldots,m\right\} $
either $F_{i}=S_{1}^{k_{i}}\circ S_{i}$ or $F_{i}=S_{2}^{k_{i}}\circ S_{i}$
for some $k_{i}\in\mathbb{N}$, and $F_{i}$ and $F_{j}$ have no
common fixed point for all $i,j\in\left\{ 1,\ldots,m\right\} $, $i\neq j$.
\end{lem}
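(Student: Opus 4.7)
The plan is to construct $F_3,\ldots,F_m$ inductively, choosing at each step a suitable power $k_i$ so that the fixed point of $F_i$ avoids the fixed points of all previously constructed $F_j$. The key point is that each candidate family has infinitely many pairwise distinct fixed points, while at each stage only finitely many must be avoided.

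First I will make the observation that for each $i\in\{3,\ldots,m\}$, since $S_1$ and $S_2$ have distinct (unique) fixed points, the fixed point of $S_i$ can coincide with the fixed point of at most one of $S_1,S_2$. Hence there exists $\sigma(i)\in\{1,2\}$ such that $S_{\sigma(i)}$ and $S_i$ have no common fixed point. Applying Lemma \ref{lem: fixpoint lem} to the pair $(S_{\sigma(i)}, S_i)$ (in the roles of $S_1,S_2$), I obtain that the similarities $S_{\sigma(i)}^{n}\circ S_i$, $n\in\mathbb{N}$, all have pairwise distinct fixed points.

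Now I carry out the induction. Setting $F_1=S_1$ and $F_2=S_2$, which have no common fixed point by hypothesis, suppose $F_1,\ldots,F_{i-1}$ have been constructed so that any two among them have distinct fixed points. Each $F_j$ is a contracting similarity (composition of contracting similarities), and hence by Banach's fixed point theorem has a unique fixed point; so the collection of ``forbidden'' points has cardinality at most $i-1$. Since the fixed points of $\{S_{\sigma(i)}^{n}\circ S_i\}_{n\in\mathbb{N}}$ are pairwise distinct, at most $i-1$ values of $n$ produce a fixed point lying in the forbidden set. I pick any $k_i\in\mathbb{N}$ outside this finite bad set and define $F_i := S_{\sigma(i)}^{k_i}\circ S_i$. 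By construction, $F_i$ has a fixed point distinct from those of $F_1,\ldots,F_{i-1}$, so the inductive hypothesis is maintained.

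The only thing that could fail is the existence of $\sigma(i)$ at some stage, and this is precisely ruled out by the standing assumption that $S_1$ and $S_2$ have no common fixed point; this is the one and only place that hypothesis enters, and no other genuine obstacle arises. The argument produces the required $F_1,\ldots,F_m$ with the prescribed form and no pair sharing a fixed point.
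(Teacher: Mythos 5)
Your proof is correct and follows essentially the same route as the paper: an inductive construction adding one map at a time, using the observation that $S_i$ can share its fixed point with at most one of $S_1,S_2$, and invoking Lemma \ref{lem: fixpoint lem} to get infinitely many distinct candidate fixed points from which finitely many forbidden ones are excluded.
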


\begin{proof}
We prove this by induction on $m$. If $m=2$ then it is trivial.
Let $m>2$. Then by the inductive assumption we can find such a system
$F_{1},\ldots,F_{m-1}$ that satisfies the conclusion for $S_{1},\ldots,S_{m-1}$.
The unique fixed point of $S_{m}$ is either not the fixed point of
$S_{1}$ or not the fixed point of $S_{2}$. Without the loss of generality
we can assume that $S_{m}$ and $S_{1}$ have no common fixed points.
Then by Lemma \ref{lem: fixpoint lem} there exists $k_{m}\in\mathbb{N}$
such that the fixed point of $S_{1}^{k_{m}}\circ S_{m}$ is different
from the fixed points of $F_{1},\ldots,F_{m-1}$. If we set $F_{m}=S_{1}^{k_{m}}\circ S_{m}$
then $F_{1},\ldots,F_{m}$ satisfies the conclusion.
\end{proof}

\noindent Now we are ready to prove Proposition \textit{\ref{lem:dim approx lem}}.
The proof consists of two steps. First we find a collection of words
$\boldsymbol{\mathbf{j}}_{i}\in\bigcup_{k=1}^{\infty}\mathcal{I}^{k}$
such that the group generated by $T_{\boldsymbol{\mathbf{j}}_{i}}$
is dense in $\mathcal{T}$ and the $S_{\boldsymbol{\mathbf{j}}_{i}}(K)$
are disjoint. At this point we do not care about the dimension. Then
we add another finite set of maps to the new SS-IFS so that the strong
separation condition still holds and the dimension becomes arbitrarily
close to that of $K$.\\

\noindent \textit{Proof of Proposition \ref{lem:dim approx lem}.}
Since $K$ has at least two points there exist $i,j\in\mathcal{I}$
such that $S_{i}$ and $S_{j}$ have no common fixed point, otherwise
the common fixed point would be the attractor. Without the loss of
generality we can assume that $i=1$ and $j=2$.

It follows from Lemma \ref{lem:FP change} that there exist $\boldsymbol{\mathbf{i}}_{1},\ldots,\boldsymbol{\mathbf{i}}_{m}\in\bigcup_{k=1}^{\infty}\mathcal{I}^{k}$
such that $S_{\boldsymbol{\mathbf{i}}_{i}}$ and $S_{\boldsymbol{\mathbf{i}}_{j}}$
have no common fixed point for all $i,j\in\mathcal{I}$, $i\neq j$,
$\boldsymbol{\mathbf{i}}_{1}=1$, $\boldsymbol{\mathbf{i}}_{2}=2$
and the group generated by $T_{\boldsymbol{\mathbf{i}}_{1}},\ldots,T_{\boldsymbol{\mathbf{i}}_{m}}$
is $\mathcal{T}$. Let $x_{i}$ be the unique fixed point of $S_{\boldsymbol{\mathbf{i}}_{i}}$
for all $i\in\mathcal{I}$. Let $d_{\mathrm{min}}=\min\left\{ \left\Vert x_{i}-x_{j}\right\Vert :i,j\in\mathcal{I},i\neq j\right\} >0$,
$r_{\mathrm{max}}=\max\left\{ r_{i}:i\in\mathcal{I}\right\} <1$ and
$N\in\mathbb{N}$ such that $r_{\mathrm{max}}^{N}\cdot\mathrm{diam}(K)<\frac{d_{\mathrm{min}}}{2}$.
Then $S_{\boldsymbol{\mathbf{i}}_{i}}^{k_{i}}(K)\cap S_{\boldsymbol{\mathbf{i}}_{j}}^{k_{j}}(K)=\emptyset$
for all $i,j\in\mathcal{I}$, $i\neq j$, $k_{i},k_{j}\in\mathbb{N}$,
$k_{i},k_{j}\geq N$.

By Proposition \ref{lem:Jordanos lem} for all $i\in\mathcal{I}$
we can find $k_{i}\in N$, $k_{i}\geq N$ such that the group generated
by $T_{\boldsymbol{\mathbf{i}}_{i}}^{k_{i}}$ is dense in the group
generated by $T_{\boldsymbol{\mathbf{i}}_{i}}$. It follows that the
group generated by $T_{\boldsymbol{\mathbf{i}}_{1}}^{k_{1}},\ldots,T_{\boldsymbol{\mathbf{i}}_{m}}^{k_{m}}$
is dense in $\mathcal{T}$ and $S_{\boldsymbol{\mathbf{i}}_{i}}^{k_{i}}(K)\cap S_{\boldsymbol{\mathbf{i}}_{j}}^{k_{j}}(K)=\emptyset$
for all $i,j\in\mathcal{I}$, $i\neq j$. Let $\widehat{S_{i}}=S_{\boldsymbol{\mathbf{i}}_{i}}^{k_{i}}$
for all $i\in\mathcal{I}$.

Let $F=\bigcup_{i\in\mathcal{I}}S_{\boldsymbol{\mathbf{i}}_{i}}^{k_{i}}(K)$.
If $K=F$ then $\left\{ \widehat{S_{i}}\right\} _{i=1}^{m}$ satisfies
the SSC with attractor $\widehat{K}=K$ and the proof is complete.
So we can assume that $F\subsetneq K$. Let $\boldsymbol{\mathbf{j}}\in\bigcup_{k=1}^{\infty}\mathcal{I}^{k}$
be such that $K_{\boldsymbol{\mathbf{j}}}\cap F=\emptyset$. Let $t=\dim_{H}K=\dim_{H}K_{\boldsymbol{\mathbf{j}}}$.
Since $K$ has at least two points it follows that $K$ has infinitely
many points but by Proposition \ref{prop:.implicitkov} $\mathcal{H}^{t}(K)<\infty$,
thus $t>0$ and hence without the loss of generality we can assume
that $t>\varepsilon>0$. Since $\mathcal{H}^{t-\frac{\varepsilon}{2}}(K_{\boldsymbol{\mathbf{j}}})=\infty$
we can find $\delta>0$ such that for any $3\delta$-cover $\mathcal{U}$
of $K_{\boldsymbol{\mathbf{j}}}$ we have that $\sum_{U\in\mathcal{U}}\mathrm{diam}(U)^{t-\frac{\varepsilon}{2}}>1$.
Let $r_{\mathrm{min}}=\min\left\{ r_{i}:i\in\mathcal{I}\right\} <1$
and let $\mathcal{J}=\left\{ \boldsymbol{\mathbf{i}}\in\bigcup_{k=1}^{\infty}\mathcal{I}^{k}:K_{\boldsymbol{\mathbf{i}}}\subseteq K_{\boldsymbol{\mathbf{j}}},r_{\mathrm{min}}\delta\leq\mathrm{diam}(K_{\boldsymbol{\mathbf{i}}})<\delta\right\} $.
Then $\left\{ K_{\boldsymbol{\mathbf{i}}}:\boldsymbol{\mathbf{i}}\in\mathcal{J}\right\} $
is a cover of $K_{\boldsymbol{\mathbf{j}}}$. Let $\boldsymbol{\mathbf{j}}_{1},\ldots,\boldsymbol{\mathbf{j}}_{n}\in\mathcal{J}$
be such that $K_{\boldsymbol{\mathbf{j}}_{1}},\ldots,K_{\boldsymbol{\mathbf{j}}_{n}}$
is a maximal pairwise disjoint sub-collection of $\left\{ K_{\boldsymbol{\mathbf{i}}}:\boldsymbol{\mathbf{i}}\in\mathcal{J}\right\} $.
Let $U_{j}$ be the $\delta$-neighbourhood of $K_{\boldsymbol{\mathbf{j}}_{j}}$
for $j\in\left\{ 1,\ldots,n\right\} $. By the maximality $\left\{ U_{j}:j\in\left\{ 1,\ldots,n\right\} \right\} $
is a $3\delta$-cover of $K_{\boldsymbol{\mathbf{j}}}$. Hence by
the choice of $\delta$
\[
\sum_{j=1}^{n}\left(3\delta\right)^{t-\frac{\varepsilon}{2}}\geq\sum_{j=1}^{n}\left(\mathrm{diam}(U_{j})\right)^{t-\frac{\varepsilon}{2}}>1.
\]
It follows that $n\geq\left(3\delta\right)^{-\left(t-\frac{\varepsilon}{2}\right)}$.
Let $K_{0}$ be the attractor of the SS-IFS $\left\{ S_{\boldsymbol{\mathbf{j}}_{j}}\right\} _{j=1}^{n}$.
Then $K_{0}\subseteq K$, the SS-IFS $\left\{ S_{\boldsymbol{\mathbf{j}}_{j}}\right\} _{j=1}^{n}$
satisfies the SSC and
\[
\dim_{H}K_{0}\geq\frac{\log(\frac{1}{n})}{\log(\frac{r_{\mathrm{min}}\cdot\delta}{\mathrm{diam}(K)})}\geq\frac{-\left(t-\frac{\varepsilon}{2}\right)\cdot\log(3)-\left(t-\frac{\varepsilon}{2}\right)\cdot\log(\delta)}{\log(\mathrm{diam}(K))-\log(r_{\mathrm{min}})-\log(\delta)}
\]
because the similarity dimension of $\left\{ S_{\boldsymbol{\mathbf{j}}_{j}}\right\} _{j=1}^{n}$
is $\dim_{H}K_{0}$ by Proposition \ref{prop: OSC schief prop}. So,
by choosing $\delta$ small enough, $\dim_{H}K_{0}>t-\varepsilon$.
Let $\widehat{m}=m+n$, $\widehat{S_{m+j}}=S_{\boldsymbol{\mathbf{j}}_{j}}$
for all $j\in\left\{ 1,\ldots,n\right\} $ and $\widehat{K}$ be the
attractor of the SS-IFS $\left\{ \widehat{S_{i}}\right\} _{i=1}^{\widehat{m}}$.
Then the transformation group $\widehat{\mathcal{T}}$ of $\left\{ \widehat{S_{i}}\right\} _{i=1}^{\widehat{m}}$
is dense in $\mathcal{T}$, $K_{0}\subseteq\widehat{K}\subseteq K$,
$\dim_{H}K-\varepsilon<\dim_{H}K_{0}\leq\dim_{H}\widehat{K}$ and
$\left\{ \widehat{S_{i}}\right\} _{i=1}^{\widehat{m}}$ satisfies
the SSC.$\hfill\square$\\

A similar argument to the last step of the proof of Proposition \ref{lem:dim approx lem}
was used in the proof of \cite[Theorem 2]{Peres-Schmerkin Resonance between Cantor sets}.\\

\noindent \textit{Proof of Theorem \ref{thm:Dense group dim conserv}.}
The upper bound $\dim_{H}\left(g(K)\right)\leq\min\left\{ t,l\right\} $
follows since $g$ is a Lipschitz map on $K$.

First assume that $\mathrm{rank}(g'(x))=l$ holds for every $x\in U$.
By Proposition \ref{lem:dim approx lem}, for all $\varepsilon>0$
there exists an SS-IFS $\left\{ \widehat{S_{i}}\right\} _{i=1}^{\widehat{m}}$
that satisfies the SSC with attractor $\widehat{K}$ such that $\widehat{K}\subseteq K$,
$\dim_{H}K-\varepsilon<\dim_{H}\widehat{K}$ and for the transformation
group $\widehat{\mathcal{T}}$ of $\left\{ \widehat{S_{i}}\right\} _{i=1}^{\widehat{m}}$
we have that $\left\{ O(M):O\in\widehat{\mathcal{T}}\right\} $ is
dense in $G_{d,l}$. By \cite[Corollary 1.7]{Hochman-Schmerkin-local entropy}
$\dim_{H}\left(g(\widehat{K})\right)=\min\left\{ \dim_{H}\widehat{K},l\right\} $.
Hence
\[
\dim_{H}\left(g(K)\right)\geq\dim_{H}\left(g(\widehat{K})\right)=\min\left\{ \dim_{H}\widehat{K},l\right\} \geq\min\left\{ \dim_{H}K-\varepsilon,l\right\} .
\]
So $\dim_{H}\left(g(K)\right)\geq\min\left\{ t-\varepsilon,l\right\} $
for all $\varepsilon>0$ and hence $\dim_{H}\left(g(K)\right)=\min\left\{ t,l\right\} $.

In the general case there exists $x\in K$ such that $\mathrm{rank}(g'(x))=l$
it follows that there exists an open neighbourhood $V$ of $x$ such
that $\mathrm{rank}(g'(y))=l$ for every $y\in V$. For large enough
$k$ there exists $\boldsymbol{\mathbf{i}}\in\mathcal{I}^{k}$ such
that $K_{\boldsymbol{\mathbf{i}}}\subseteq V$. Then $K_{\boldsymbol{\mathbf{i}}}$
is the attractor of the SS-IFS $\left\{ S_{\boldsymbol{\mathbf{i}}}\circ S_{j}\circ S_{\boldsymbol{\mathbf{i}}}^{-1}\right\} _{j=1}^{m}$
and $\left\{ O(M):O\in\mathcal{T}_{\boldsymbol{\mathbf{i}}}\right\} $
is dense in $G_{d,l}$ where $\mathcal{T}_{\boldsymbol{\mathbf{i}}}$
is the transformation group of $\left\{ S_{\boldsymbol{\mathbf{i}}}\circ S_{j}\circ S_{\boldsymbol{\mathbf{i}}}^{-1}\right\} _{j=1}^{m}$.
Thus we can assume that $\mathrm{rank}(g'(x))=l$ holds for every
$x\in U$.$\hfill\square$\\

\noindent \textit{Proof of Corollary \ref{cor:Dense diff dim conserv COR}.}
Let $g(y)=\left(g_{1}(y),\ldots,g_{d_{2}}(y)\right)$ and set an arbitrary
point $x\in K$. Since $\mathrm{rank}(g'(x))=l$ it follows that there
are $l$ coordinate indices $1\leq j_{1}\leq\ldots\leq j_{l}\leq d_{2}$
such that the vectors $g'_{j_{1}}(x),\ldots,g'_{j_{l}}(x)$ are linearly
independent. Let $P:\mathbb{R}^{d_{2}}\longrightarrow\mathbb{R}^{l}$
be the projection $P(y)=\left(y_{j_{1}},\ldots,y_{j_{l}}\right)$
and $f:U\longrightarrow\mathbb{R}^{l}$ be $f(y)=P\circ g(y)$. Note
that $P$ and hence $f$ may depend on $x$. Then the conditions of
Theorem \ref{thm:Dense group dim conserv} are satisfied for $f$
in place of $g$. Thus $\dim_{H}P\circ g(K)=\min\left\{ t,l\right\} $
and hence $\dim_{H}g(K)\geq\min\left\{ t,l\right\} $.

The upper bound in case of \textit{i)} follows from \cite[Theorem 3.4.3]{Federer book}.
The upper bound in case of \textit{ii)} follows since $g$ is a Lipschitz
map on $K$ and hence $\dim_{H}g(K)\leq t=\min\left\{ t,l\right\} $.

\section{Examples and questions\label{sec:Examples-and-questions}}

In this section we raise some open questions and provide examples.

If the Hausdorff dimension coincides with the similarity dimension
and $\left|\mathcal{T}\right|<\infty$ then by Theorem \ref{thm:dimension drop SS}
there must be at least one projection where the dimension drops. The
following example is very well-known and shows that it is possible
to have a projection of positive measure when $\left|\mathcal{T}\right|<\infty$.

\begin{example}
\label{Ex: poz proj sierp}For $0<t\leq1$ the $t$-dimensional Sierpinski
triangle is the attractor of the SS-IFS that contains three homotheties
which map an equilateral triangle into itself fixing the corners with
similarity ratio $r=3^{-1/t}$. Then $\left|\mathcal{T}\right|=1$
and $\mathcal{H}^{t}\left(\Pi_{M}(K)\right)>0$ where $M$ is a line
paralell to one of the sides of the triangle.
\end{example}

\begin{question}
Is it true that if $\left|\mathcal{T}\right|<\infty$ and $t\leq l<d$
then we can always find $l$-dimensional subspaces $M_{1}$ and $M_{2}$
such that $\mathcal{H}^{t}\left(\Pi_{M_{1}}(K)\right)>0$ and $\dim_{H}\left(\Pi_{M_{2}}(K)\right)<t$?
\end{question}

Theorem \ref{cor:Dense group lin image 0} shows that if $\left\{ O(M):O\in\mathcal{T}\right\} $
is dense in $G_{d,l}$ for some $M\in G_{d,l}$ then every projection
is of zero measure, on the other hand Theorem \ref{thm:Dense group dim conserv}
shows that there is no projection where the dimension drops. Example
\ref{Ex: vegyes proj} shows that $\left|\mathcal{T}\right|=\infty$
is not enough to imply either of these results.
\begin{example}
\label{Ex: vegyes proj}There exists a self-similar set $K$ with
$t=\dim_{H}\left(K\right)$ such that $\left|\mathcal{T}\right|=\infty$
and there exist three different orthogonal projections $P_{1},P_{2},P_{3}$
of $K$ onto lines with the following properties: $t=\dim_{H}\left(P_{1}\right)$
and $\mathcal{H}^{t}\left(P_{1}\right)=0$, $\mathcal{H}^{t}\left(P_{2}\right)>0$
and $\dim_{H}\left(P_{3}\right)<t$. Let $T_{1}:\mathbb{R}^{2}\longrightarrow\mathbb{R}^{2}$
be a rotation around the origin by angle $\alpha\cdot\pi$ for some
$\alpha\notin\mathbb{Q}$ and let $T:\mathbb{R}^{2}\times\mathbb{R}^{2}\longrightarrow\mathbb{R}^{2}\times\mathbb{R}^{2}$
be defined as $T(x,y)=(T_{1}(x),y)$ for $x,y\in\mathbb{R}^{2}$.
Let $r\leq\frac{1}{3}$ and $v_{1,i}\in\mathbb{R}^{2}$ for $i=1,2,3$
be such that the SS-IFS $\left\{ r\cdot T_{1}(x)+v_{1,i}\right\} _{i=1}^{3}$
satisfies the SSC with attractor $K_{1}$. Let $v_{2,i}\in\mathbb{R}^{2}$
for $i=1,2,3$ be such that the attractor of the SS-IFS $\left\{ r\cdot Id_{\mathbb{R}^{2}}(x)+v_{2,i}\right\} _{i=1}^{3}$
is the $\frac{\log(3)}{\log(r^{-1})}$-dimensional Sierpinski triangle
$K_{2}$. Set $v_{i}=(v_{1,i},v_{2,i})\in\mathbb{R}^{2}\times\mathbb{R}^{2}$,
$S_{i}(z)=r\cdot T(z)+v_{i}$ for $z\in\mathbb{R}^{2}\times\mathbb{R}^{2}$,
$i=1,2,3$ and let $K$ be the attractor of the SS-IFS $\left\{ S_{i}\right\} _{i=1}^{3}$.
Then $\left\{ S_{i}\right\} _{i=1}^{3}$ satisfies the SSC, hence
$t=\dim_{H}K=\dim_{H}K_{1}=\dim_{H}K_{2}=\frac{\log(3)}{\log(r^{-1})}$.
Let $M_{1}=\mathbb{R}^{2}\times(0,0)$, let $L_{1}\subseteq M_{1}$
be any line, $M_{2}=(0,0)\times\mathbb{R}^{2}$ and $L_{2}=(0,0)\times\mathbb{R}\times(0)$.
One can show that $\Pi_{M_{1}}(K)=K_{1}$, thus $P_{1}=\Pi_{L_{1}}(K)=\Pi_{L_{1}}\circ\Pi_{M_{1}}(K)=\Pi_{L_{1}}(K_{1})$
and hence by Theorem \ref{thm:Dense group dim conserv} and Theorem
\ref{cor:Dense group lin image 0} $\dim_{H}\left(\Pi_{L_{1}}(K)\right)=t$
and $\mathcal{H}^{t}\left(\Pi_{L_{1}}(K)\right)=0$. On the other
hand $\Pi_{M_{2}}(K)=K_{2}$, thus $P_{2}=\Pi_{L_{2}}(K)=\Pi_{L_{2}}\circ\Pi_{M_{2}}(K)=\Pi_{L_{2}}(K_{2})$
and hence $\mathcal{H}^{t}\left(\Pi_{L_{2}}(K)\right)>0$. Finally
by Theorem \ref{thm:dimension drop SS} there exists a line $L_{3}\subseteq M_{2}$
such that $\dim_{H}\left(\Pi_{L_{3}}(K_{2})\right)<t$ and hence $\dim_{H}\left(\Pi_{L_{3}}(K)\right)<t$.
The transformation group $\mathcal{T}$ of $\left\{ S_{i}\right\} _{i=1}^{3}$
is infinite, but $\left\{ O(L):O\in\mathcal{T}\right\} $ is not dense
in $G_{4,1}$ for any $L\in G_{4,1}$ and $K$ is not contained in
any affine hyperplane.
\end{example}

Is there a counterexample to the converse of Proposition \ref{cor:Linim 0 lemakeppen}
and Theorem \ref{thm:Dense group dim conserv}?

\begin{question}
Is there an example of an SS-IFS such that $\mathcal{T}$ is infinite,
$\dim_{H}\left(\Pi_{N}(K)\right)=t$ and $\mathcal{H}^{t}\left(\Pi_{N}(K)\right)=0$
for every $N\in G_{d,l}$, but $\left\{ O(M):O\in\mathcal{T}\right\} $
is not dense in $G_{d,l}$ for any $M\in G_{d,l}$ and $\mathcal{H}^{t}(K)>0$?
\end{question}

The following example shows that we cannot replace $g$ with a Lipschitz
function in Theorem \ref{thm:Dense group dim conserv} and Theorem
\ref{cor:Dense group lin image 0}.

\begin{example}
\label{cannot replace}Let $t\leq1$ and $\left\{ S_{i}\right\} _{i=1}^{3}$
be an SS-IFS in $\mathbb{R}^{2}$ such that $S_{1}$ and $S_{2}$
are two maps from the usual SS-IFS of the $t$-dimensional Sierpinski
triangle and we slightly modify the orthogonal part of the third map
so that $T_{3}$ is a rotation of a small angle $\alpha\cdot\pi$
for some $\alpha\notin\mathbb{Q}$. Let $K$ be the attractor of $\left\{ S_{i}\right\} _{i=1}^{3}$
and $\widehat{K}$ be the $t$-dimensional Sierpinski triangle. Then
one can show that the natural bijection $f$ between $K$ and $\widehat{K}$
is a bi-Lipschitz function. Then the assumptions of Theorem \ref{cor:Dense group lin image 0}
holds for $\left\{ S_{i}\right\} _{i=1}^{3}$ and $l=1$ but there
exist lines $M_{1}$ and $M_{2}$ such that $\mathcal{H}^{t}\left(\Pi_{M_{1}}(f(K))\right)>0$
and $\dim_{H}\left(\Pi_{M_{2}}(f(K))\right)<t$.
\end{example}

The two following examples show that the assumption $\mathcal{H}^{t}(K)>0$
where $t=\dim_{H}K$ is weaker than the OSC.

\begin{example}
\label{Ex: nonOSC-line}There exists a self-similar set $\widehat{K}\subseteq\mathbb{R}$
such that no SS-IFS with attractor $\widehat{K}$ satisfies the OSC
but $0<\mathcal{H}^{t}(\widehat{K})<\infty$ where $t=\dim_{H}\widehat{K}$.

Let $0<r<\frac{1}{3}$ and $g=\frac{1-3r}{2}$. We first define an
SS-IFS as follows (see Figure 1): $S_{1}(x)=r\cdot x$, $S_{2}(x)=r\cdot x+r+g$
and $S_{3}(x)=r\cdot x+2r+2g$. We denote by $K$ the attractor of
$\left\{ S_{i}\right\} _{i=1}^{3}$. Since $\left\{ S_{i}\right\} _{i=1}^{3}$
satisfies the OSC it follows that $0<\mathcal{H}^{t}(K)<\infty$ where
$t=\dim_{H}K$. The set $\widehat{K}=K\setminus S_{3}(K)=S_{1}(K)\cup S_{2}(K)$
is also a self similar set, namely it is the attractor of an SS-IFS
containing the following four maps: $\widehat{S_{1}}(x)=S_{1}(x)$,
$\widehat{S_{2}}(x)=S_{1}(x)+r(r+g)$, $\widehat{S_{3}}(x)=S_{2}(x)$
and $\widehat{S_{4}}(x)=S_{2}(x)+r(r+g)$. We have that $0<\mathcal{H}^{t}(\widehat{K})<\infty$.

Let $F(x)=a\cdot x+b$ a contractive similarity such that $F(\widehat{K})\subseteq\widehat{K}$.
We show that $a=r^{n}$ for some positive integer $n$. We call the
length of the longest bounded component of the complement of a compact
set the \textit{largest gap}.

First assume that $r\leq a<1$. The largest gap of $\widehat{K}$
is $g$ and the largest gap of $F(\widehat{K})$ is $ag<g$. The distance
between $S_{1}(K)$ and $S_{2}(K)$ is $g$ hence either $F(\widehat{K})\subseteq S_{1}(K)$
or $F(\widehat{K})\subseteq S_{2}(K)$. For simplicity assume that
$F(\widehat{K})\subseteq S_{1}(K)$, the proof goes similarly in the
case $F(\widehat{K})\subseteq S_{2}(K)$. The largest gap of $F\circ S_{1}(K)$
is $arg<rg$. The smallest distance between the sets $S_{1}\circ S_{1}(K)$,
$S_{1}\circ S_{2}(K)$ and $S_{1}\circ S_{3}(K)$ is $rg$. Hence
either $F\circ S_{1}(K)\subseteq S_{1}\circ S_{1}(K)$ or $F\circ S_{1}(K)\subseteq S_{1}\circ S_{2}(K)$
or $F\circ S_{1}(K)\subseteq S_{1}\circ S_{3}(K)$. Thus $ar\leq rr$
and so $a\leq r$. Since we assumed $r\leq a<1$ it follows that $a=r$.

Now assume that $r^{n}\leq a<r^{n-1}$ for some positive integer $n$.
As above we can show that $F(\widehat{K})\subseteq S_{\boldsymbol{\mathbf{i}}}(K)$
for some $\boldsymbol{\mathbf{i}}\in\left\{ 1,2,3\right\} ^{n}$ and
$F\circ S_{1}(K)\subseteq S_{\boldsymbol{\mathbf{i}}}\circ S_{j}(K)$
for some $j\in\left\{ 1,2,3\right\} $. Hence $a=r^{n}$.

Since $F(\widehat{K})\subseteq S_{\boldsymbol{\mathbf{i}}}(K)$ for
some $\boldsymbol{\mathbf{i}}\in\left\{ 1,2,3\right\} ^{n}$ and $a=r^{n}$
it follows that either
\begin{equation}
F(\widehat{K})=S_{\boldsymbol{\mathbf{i}}}\circ S_{1}(K)\cup S_{\boldsymbol{\mathbf{i}}}\circ S_{2}(K)\,\mathrm{or}\,F(\widehat{K})=S_{\boldsymbol{\mathbf{i}}}\circ S_{2}(K)\cup S_{\boldsymbol{\mathbf{i}}}\circ S_{3}(K)\label{eq:nonosc exeq}
\end{equation}
because the largest gap of $F(\widehat{K})$ and one of the largest
gaps of $S_{\boldsymbol{\mathbf{i}}}(K)$ must coincide.

Let $\left\{ F_{i}\right\} _{i=1}^{m}$ be an SS-IFS with attractor
$\widehat{K}$. Without the loss of generality we can assume that
the similarity ratio $r^{n}$ of $F_{1}(x)$ is the smallest of the
similarity ratios of the maps $F_{i}$. By (\ref{eq:nonosc exeq})
and the minimality of $r^{n}$ there exists $j\in\left\{ 1,\ldots,m\right\} $
such that $S_{\boldsymbol{\mathbf{i}}}(K)\setminus F_{1}(\widehat{K})\subseteq F_{j}(\widehat{K})$
and either $F_{1}(\widehat{K})\cap F_{j}(\widehat{K})=F_{1}(\widehat{K})$
or $F_{1}(\widehat{K})\cap F_{j}(\widehat{K})=S_{\boldsymbol{\mathbf{i}}}\circ S_{2}(K)$.
Thus $\mathcal{H}^{t}\left(F_{1}(\widehat{K})\cap F_{j}(\widehat{K})\right)>0$
and so $\left\{ F_{i}\right\} _{i=1}^{m}$ cannot satisfy the OSC.
\end{example}

\includegraphics[scale=0.5]{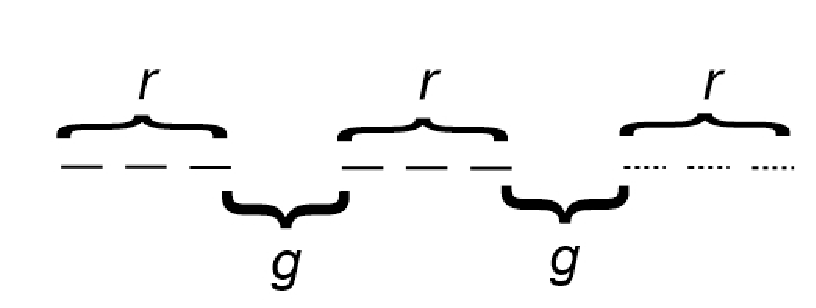}~~~~~~~~\includegraphics[scale=0.3]{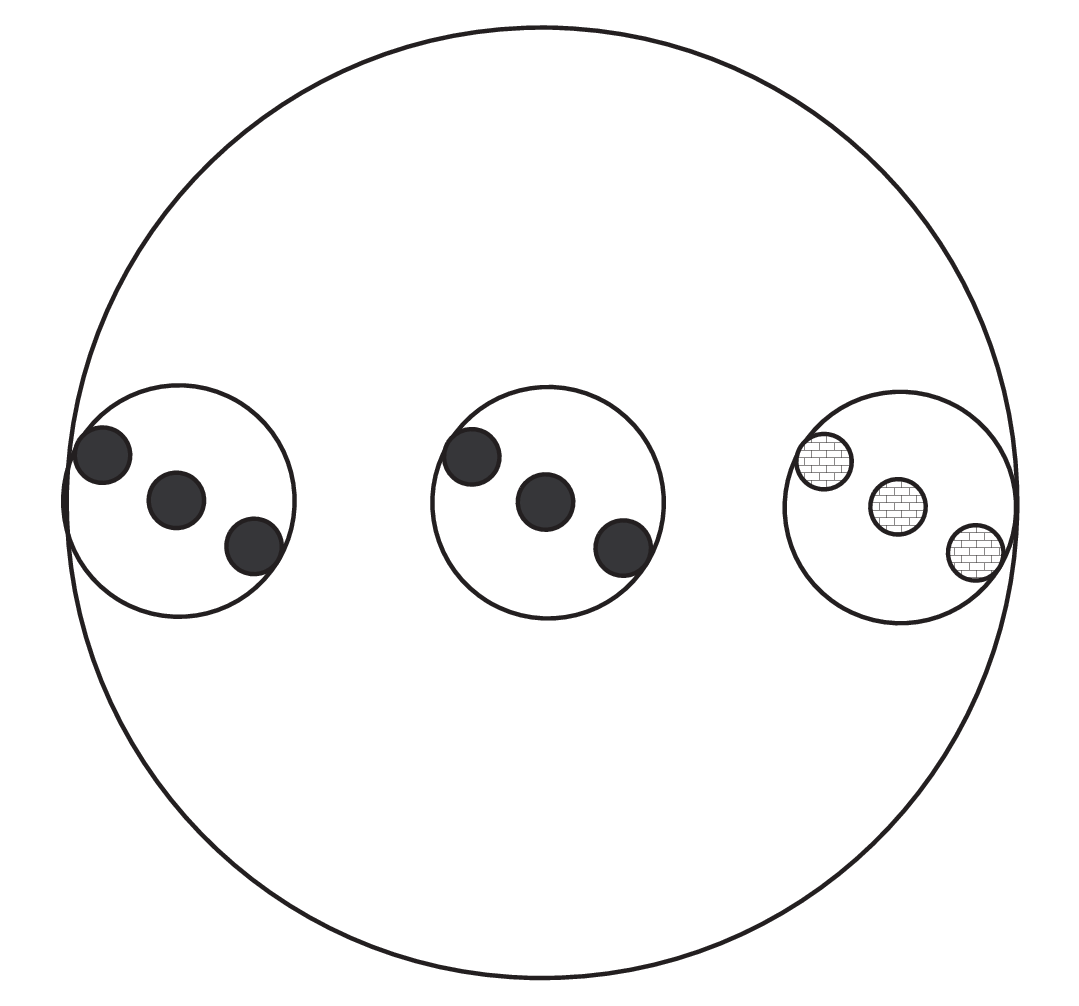}

~~~~~~~~~~~~~~~~~~~~~Figure 1.~~~~~~~~~~~~~~~~~~~~~~~~~~~~~~~~~~~~~~~~Figure
2.

\begin{example}
\label{Ex: nonOSC-Plane}Let $T$ be a rotation around the origin
by angle $\alpha\in[0,2\pi)$. There exists a self-similar set $\widehat{K}\subseteq\mathbb{R}^{2}$
such that no SS-IFS with attractor $\widehat{K}$ satisfies the OSC
but $0<\mathcal{H}^{t}(\widehat{K})<\infty$ where $t=\dim_{H}\widehat{K}$
and there exists an SS-IFS with attractor $\widehat{K}$ such that
the transformation group of the SS-IFS is generated by $T$.

Let $0<r<\frac{1}{3}$ and $g=1-3r$. We define an SS-IFS in $\mathbb{R}^{2}$
as follows (see Figure 2): $S_{1}(x)=rT(x)+(-g-2r,0)$, $S_{2}(x)=rT(x)$
and $S_{3}(x)=rT(x)+(g+2r,0)$. We denote by $K$ the attractor of
$\left\{ S_{i}\right\} _{i=1}^{3}$. Since $\left\{ S_{i}\right\} _{i=1}^{3}$
satisfies the OSC it follows that $0<\mathcal{H}^{t}(K)<\infty$ where
$t=\dim_{H}K$. The set $\widehat{K}=K\setminus S_{3}(K)=S_{1}(K)\cup S_{2}(K)$
is also a self-similar set, namely it is the attractor of an SS-IFS
containing the following four maps: $\widehat{S_{1}}(x)=S_{1}(x)$,
$\widehat{S_{2}}(x)=S_{1}\left(x+(g+2r,0)\right)$, $\widehat{S_{3}}(x)=S_{2}(x)$
and $\widehat{S_{4}}(x)=S_{2}\left(x+(g+2r,0)\right)$. We have that
$0<\mathcal{H}^{t}(\widehat{K})<\infty$ and the transformation group
of $\left\{ \widehat{S_{i}}\right\} _{i=1}^{4}$ is generated by $T$.

We can show that there is no SS-IFS with attractor $\widehat{K}$
that satisfies the OSC via a similar argument to the argument in Example
\ref{Ex: nonOSC-line} with the difference that the largest gap of
$K$ and $\widehat{K}$ will be replaced by the smallest distance
between $S_{1}(K)$ and $S_{2}(K)$. We note that this distance is
greater than $g$. 
\end{example}

\begin{rem}
We note that both in Example \ref{Ex: nonOSC-line} and Example \ref{Ex: nonOSC-Plane}
the semigroup generated by $\left\{ \widehat{S_{i}}\right\} _{i=1}^{4}$
is not free. Hence after iteration and deleting repetitions one can
reduce the similarity dimension of the SS-IFS. It is not hard to see
that we can find an SS-IFS with attractor $\widehat{K}$ of similarity
dimension arbitrarily close to $t$ but we cannot find an SS-IFS with
attractor $\widehat{K}$ of similarity dimension $t$ because of Proposition
\ref{prop: OSC schief prop}.
\end{rem}

\begin{center}
$\mathbf{Acknowledgements}$
\par\end{center}

The author was supported by an EPSRC doctoral training grant and thanks
Kenneth Falconer, Jonathan Fraser and the anonymous referee for many
valuable suggestions on improving the exposition of the article.

\end{document}